\newtheorem*{thm*}{Theorem}
\newtheorem{thm}{Theorem}[section]
\newtheorem{lem}[thm]{Lemma}
\newtheorem{prop}[thm]{Proposition}
\newtheorem{conj}[thm]{Conjecture}
\newtheorem{cor}[thm]{Corollary}
\newdefinition{rmk}{Remark}
\newproof{pf}{Proof}
\theoremstyle{definition}
\newtheorem{defi}[thm]{Definition}
\newtheorem{exm}[thm]{Example}
\newcommand{\BAA}{\mathbb A}
\newcommand{\FAA}{\mathfrak A}
\newcommand{\CBB}{\mathcal B}
\newcommand{\CNN}{\mathcal N}
\newcommand{\CQQ}{\mathcal Q}
\newcommand{\CSS}{\mathcal S}
\newcommand{\CN}{\mathbb{C}} 
\newcommand{\R}{\mathbb R}  
\newcommand{\Q}{\mathbb Q}
\newcommand{\Z}{\mathbb Z} 
\newcommand{\N}{\mathbb N} 
\newcommand{\red}{\textcolor{red}}
\journal{ -- }
\begin{document}

\begin{frontmatter}

\title{Entropic van der Corput's Difference Theorem}

\author[aff1]{Weichen Gu}
\author[aff1,aff2]{Xiang Li}
\affiliation[aff1]{organization={Department of Mathematics and Statistics, University of New Hampshire},
            city={Durham},
            postcode={03824},
            state={New Hampshire},
            country={USA}}
\affiliation[aff2]{organization={Integrated Applied Mathematics Program, University of New Hampshire},
            city={Durham},
            postcode={03824},
            state={New Hampshire},
            country={USA}}

\begin{abstract}
We prove an entropy version of van der Corput’s difference theorem: the entropy of a sequence is equal to the entropy of its differences.
This reveals a potential correspondence between the theory of uniform distribution mod 1 and entropy.
As applications, we establish the corresponding entropy versions for several other results on uniform distribution.
\end{abstract}


\end{frontmatter}

\section{Introduction}
This paper is concerned with the entropy of sequences.
Roughly speaking,
the entropy of a sequence is the topological entropy of its corresponding dynamical system,
describing a certain complexity of the sequence.
Another concept related to the complexity of sequences is uniform distribution.
In this paper, several results in the theory of uniform distribution are generalized to their entropy versions, which show that the two themes, entropy and uniform distribution, share a similar structure.

A main theorem in the theory of uniform distribution is van der Corput's Difference Theorem, which is traditionally stated as follows:

\begin{thm*}[Van der Corput's Difference Theorem,  \cite{van1931diophantische}]\label{thm:DT}
Let $\{x(n)\}_{n\in\N}$ be a sequence taking values in the torus $\R/\Z$.
Assume that for every $d \ge1$,
the sequence $\{x(n+d)-x(n)\}_{n\in\N}$ is uniformly distributed.
Then $\{x(n)\}_{n\in \N}$ is uniformly distributed.
\end{thm*}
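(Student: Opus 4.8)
The plan is to reduce the conclusion to the vanishing of exponential sums via Weyl's criterion, and then to control those sums by van der Corput's fundamental inequality, feeding in the hypothesis on the difference sequences to annihilate the correlation terms that appear. Write $e(t) := e^{2\pi i t}$. By Weyl's criterion, a sequence $\{x(n)\}_{n\in\N}$ in $\R/\Z$ is uniformly distributed if and only if $\lim_{N\to\infty}\frac1N\sum_{n=1}^N e(h\,x(n)) = 0$ for every $h\in\Z\setminus\{0\}$. So it suffices to fix such an $h$ and show that $S_N := \frac1N\sum_{n=1}^N e(h\,x(n))$ tends to $0$.

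First I would extract what the hypothesis contributes. For each fixed $d\ge1$ the difference sequence $\{x(n+d)-x(n)\}_{n\in\N}$ is uniformly distributed, so Weyl's criterion applied with the same integer $h$ gives
$$\lim_{N\to\infty}\frac1N\sum_{n=1}^{N} e\bigl(h(x(n+d)-x(n))\bigr)=0.$$
Putting $u_n := e(h\,x(n))$, so that $|u_n|=1$ and $u_{n+d}\,\overline{u_n}=e(h(x(n+d)-x(n)))$, this says exactly that the shifted autocorrelation $\frac1N\sum_{n} u_{n+d}\,\overline{u_n}$ tends to $0$ for every $d\ge1$ (the discrepancy between summing up to $N$ and up to $N-d$ is $O(d/N)$ and so is harmless).

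The core step is van der Corput's inequality: for integers $1\le H\le N$ and any $u_1,\dots,u_N\in\CN$,
$$H^2\Bigl|\sum_{n=1}^N u_n\Bigr|^2 \le H(N+H-1)\sum_{n=1}^N|u_n|^2 + 2(N+H-1)\sum_{d=1}^{H-1}(H-d)\,\mathrm{Re}\sum_{n=1}^{N-d} u_{n+d}\,\overline{u_n}.$$
Applying it to $u_n=e(h\,x(n))$, using $|u_n|=1$, and dividing by $N^2H^2$ yields
$$|S_N|^2 \le \frac{N+H-1}{NH} + \frac{2(N+H-1)}{NH^2}\sum_{d=1}^{H-1}(H-d)\,\mathrm{Re}\Bigl(\frac1N\sum_{n=1}^{N-d} u_{n+d}\,\overline{u_n}\Bigr).$$
Now I would hold $H$ fixed and let $N\to\infty$: the first term tends to $1/H$, while in the (finite) sum over $d$ each inner correlation tends to $0$ by the previous step and the prefactor stays bounded, so the second term vanishes. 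This gives $\limsup_{N\to\infty}|S_N|^2\le 1/H$ for every $H\ge1$, and letting $H\to\infty$ forces $S_N\to0$. As $h\in\Z\setminus\{0\}$ was arbitrary, Weyl's criterion yields uniform distribution of $\{x(n)\}$.

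The main obstacle I anticipate is the correct handling of the iterated limit --- first $N\to\infty$ with $H$ fixed, then $H\to\infty$ --- and the proof of van der Corput's inequality itself, which rests on a Cauchy--Schwarz estimate applied to the block averages $\frac1H\sum_{j=0}^{H-1}u_{n+j}$. The inequality is effective precisely because the number $H-1$ of correlation terms is frozen while $N\to\infty$, so that the finitely many vanishing correlations can be collected before $H$ is sent to infinity; interchanging the two limits would destroy the argument.
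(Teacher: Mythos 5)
Your proposal is correct: it is the classical proof of van der Corput's difference theorem, reducing uniform distribution to Weyl's criterion and controlling the exponential sums $S_N$ with van der Corput's fundamental inequality, with the iterated limit taken in the right order ($N\to\infty$ for fixed $H$, then $H\to\infty$). The inequality you quote is stated correctly, the identification $u_{n+d}\overline{u_n}=e(h(x(n+d)-x(n)))$ is exactly what lets the hypothesis kill each of the finitely many correlation terms, and the $O(d/N)$ mismatch in summation ranges is indeed harmless. However, there is no proof in the paper to compare yours against: the paper states this theorem without proof, citing van der Corput's 1931 article, and uses it purely as motivation for its own main result, the entropic analogue (Theorem \ref{231211thm1}). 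It is worth noting that the paper's proof of that analogue proceeds by entirely different means --- approximating the difference sequence by finite-range functions (Lemma \ref{231210lem1}), reconstructing a candidate sequence $g_N$ by telescoping the approximated differences along arithmetic progressions, counting regular $J$-blocks via Lemma \ref{lem:Jblock}, and invoking lower semi-continuity of entropy (Lemma \ref{semicontinuity}) --- because Weyl's criterion and correlation estimates have no counterpart in the entropy setting. So your argument, while the standard and complete one for the classical statement, is specific to uniform distribution and would not transfer to the entropic theorem the paper actually proves.
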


Van der Corput's Difference Theorem contains a powerful idea of complexity reduction \cite{bergelson2016van}, and leads to many applications.
One of the applications is an inductive proof of Weyl’s equidistribution theorem \cite{weyl4},
stating that every polynomial sequence (with certain conditions) is uniformly distributed mod 1.
In the theory of entropy,
it is known that all polynomial sequences (mod 1) have zero entropy.
This allows us to obtain an entropy version of Weyl’s equidistribution theorem:
by replacing the conclusion of being ``uniformly distributed $\bmod$ $1$" in Weyl’s equidistribution theorem with having ``zero entropy", the statement still holds.
Similarly, another difference theorem of van der Corput in \cite{van1931diophantische} can also be generalized to an entropy version.
This difference theorem states that if the $k$-th difference of a monotone sequence converges to 0 (with certain conditions on the rate of the convergence),
then the sequence is uniformly distributed $\bmod$ $1$.
Substituting ``uniformly distributed $\bmod$ $1$" with ``zero entropy",
the entropy version statement was proved in \cite{GW}:
if the $k$-th difference of a sequence converges to 0,
then the sequence has zero entropy.

These comparisons lead to a situation in which the two topics, entropy and uniform distribution, share a very similar structure and appear to be ``dual'' to each other.
By this correspondence, van der Corput’s Difference Theorem asks the following question:
if all the differences of a given sequence have the same entropy, does the sequence itself have the same entropy as its differences?
Our first main result provides a positive answer to this question in a stronger form.

\begin{thm}[Entropic van der Corput's Theorem]\label{231211thm1}
Let $\{x(n)\}_{n\in\N}$ be a sequence taking values in the torus $\R/\Z$.
Then, for every $d\ge 1$, the sequence $\{x(n+d)-x(n)\}_{n\in\N}$ has the same entropy as $\{x(n)\}_{n\in\N}$.
\end{thm}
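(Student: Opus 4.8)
The plan is to realize the entropy of a sequence as the topological entropy of its orbit-closure subshift and then to compare the systems attached to $\{x(n)\}$ and to its $d$-th difference through a single factor map. Write $\bar x = (x(n))_{n\in\N} \in (\R/\Z)^{\N}$, let $\sigma$ denote the shift, and let $\Omega_x = \overline{\{\sigma^n \bar x : n \ge 0\}}$ be the orbit closure, so that $h(\{x(n)\}) = h_{\mathrm{top}}(\Omega_x, \sigma)$. Fix $d \ge 1$, set $y(n) = x(n+d) - x(n)$, and define $\pi \colon (\R/\Z)^{\N} \to (\R/\Z)^{\N}$ by $\pi(z)(n) = z(n+d) - z(n)$. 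Then $\pi$ is continuous, satisfies $\pi \circ \sigma = \sigma \circ \pi$, and sends $\bar x$ to $\bar y$; by continuity and compactness $\pi(\Omega_x) = \Omega_y$, so $(\Omega_y, \sigma)$ is a topological factor of $(\Omega_x, \sigma)$.

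First I would dispose of the inequality $h(\{y(n)\}) \le h(\{x(n)\})$, which is immediate: topological entropy does not increase under factor maps, so $h_{\mathrm{top}}(\Omega_y, \sigma) \le h_{\mathrm{top}}(\Omega_x, \sigma)$. This is the direction in which complexity can only be lost under differencing.

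The substantive direction is $h(\{x(n)\}) \le h(\{y(n)\})$, the entropy analogue of complexity reduction. Here I would invoke Bowen's inequality for the factor map $\pi$,
\[
h_{\mathrm{top}}(\Omega_x, \sigma) \le h_{\mathrm{top}}(\Omega_y, \sigma) + \sup_{w \in \Omega_y} h_{\mathrm{top}}(\sigma, \pi^{-1}(w)),
\]
where $h_{\mathrm{top}}(\sigma, K)$ is Bowen's entropy of a (not necessarily invariant) compact set $K$, computed from $(n,\epsilon)$-spanning sets in the Bowen metrics $\rho_n(z,z') = \max_{0\le j<n} \rho(\sigma^j z, \sigma^j z')$. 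The key observation is that every fibre is rigid: if $z, z' \in \pi^{-1}(w)$ then $v := z - z'$ satisfies $v(n+d) = v(n)$ for all $n$, so $v$ is $d$-periodic, whence $\rho(\sigma^j z, \sigma^j z')$ is a $d$-periodic function of $j$ and therefore $\rho_n$ agrees with $\rho_d$ on $\pi^{-1}(w)$ for every $n \ge d$. Consequently the minimal $(n,\epsilon)$-spanning number of $\pi^{-1}(w)$ is constant in $n$ once $n \ge d$, which forces $h_{\mathrm{top}}(\sigma, \pi^{-1}(w)) = 0$ for every $w$. Combining the two inequalities yields the desired equality.

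I expect the main obstacle to be technical rather than conceptual: verifying that the sequence entropy used in the paper coincides with $h_{\mathrm{top}}(\Omega_x, \sigma)$ under its precise definition (one-sided versus two-sided shift, the choice of generating metric, and the role of the base point), and invoking Bowen's fibre inequality in exactly the right form for non-invariant fibres. Once the definitions are aligned, the fibre computation is robust, since it rests only on the algebraic fact that the fibres of $\pi$ are cosets of the compact group of $d$-periodic sequences, on which the Bowen metrics stabilize after $d$ steps.
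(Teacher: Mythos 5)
Your proof is correct, and it takes a genuinely different route from the paper's. The paper never leaves the anqie/block-counting framework: the inequality that the difference has entropy at most that of $x(n)$ follows because $x_d(n)=x(n+d)-x(n)$ lies in the anqie generated by $x(n)$, so monotonicity under subanqies applies; the substantive reverse inequality (Lemma \ref{231211lem1}) is proved by approximating $x_d$ by a finite-range sequence $f_N$ of no larger entropy (Lemma \ref{231210lem1}), explicitly reconstructing an approximation $g_N$ of $x$ by summing $f_N$ along arithmetic progressions of step $d$ with anchor values reset every $dN$ steps, counting regular $J$-blocks of $g_N$ (the anchors contribute at most $N^{2Kd}$ choices, i.e.\ an entropy cost of only $\frac{1}{N}\log N$), and invoking lower semicontinuity of anqie entropy (Lemma \ref{semicontinuity}). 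You instead pass to the orbit-closure subshift --- legitimate here, since the paper's preliminaries identify anqie entropy with the topological entropy of the orbit-closure system $(X_f,\sigma_A)$ --- treat differencing as a factor map $\pi$, get the easy direction from monotonicity of entropy under factors, and get the hard direction from Bowen's fibre inequality together with the rigidity of fibres: two preimages of the same point differ by a $d$-periodic sequence, and since the metric on $\R/\Z$ is translation-invariant, the Bowen metrics on a fibre stabilize at time $d$, forcing every fibre to have zero Bowen entropy. I see no gap: Bowen's inequality does hold for continuous (not necessarily invertible) maps on compact metric spaces, so the one-sided setting is not an obstruction, and Bowen entropy of the non-invariant fibres is metric-independent because the ambient space is compact. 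The trade-off is this: your argument is shorter and conceptual, isolating the complexity-reduction mechanism (a sequence is determined by its $d$-th difference up to a $d$-periodic correction, a compact, finitely parametrized ambiguity), but it imports Bowen's theorem as a black box; the paper's proof is elementary and self-contained, and its technical ingredients --- the controlled finite-range approximation and regular-block counting --- are reused several times later in the paper (e.g.\ in the proofs of Theorem \ref{240112thm1} and Theorem \ref{240116prop1}), which your route would not supply. Both proofs ultimately rest on the same algebraic fact: the $(\R/\Z)^d$-worth of ambiguity appears in the paper as the $N^{2Kd}$ anchor count and in your proof as zero fibre entropy.
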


For the case when the sequence takes values in $\CN$,
we have the following corollary.

\begin{cor}\label{231210thm2}
Let $\{x(n)\}_{n\in\N}$ be a uniformly bounded sequence in $\mathbb{C}$.
Then the difference $\{x(n+1) - x(n)\}_{n\in\N}$ has zero entropy if and only if
$\{x(n)\}_{n\in\N}$ has zero entropy.
\end{cor}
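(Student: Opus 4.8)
The plan is to deduce the corollary from Theorem~\ref{231211thm1} in two reduction steps: first reduce the complex-valued statement to a real-valued one by splitting into real and imaginary parts, and then embed a uniformly bounded real sequence into the torus $\R/\Z$ so that Theorem~\ref{231211thm1} (with $d=1$) applies directly.

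For the real reduction, I would first use that the entropy of a sequence is invariant under applying a fixed homeomorphism of the target space coordinatewise, since such a map induces a topological conjugacy of the associated subshifts that commutes with the shift. In particular, entropy is unchanged under the rescaling $a(n)\mapsto c\,a(n)$ for any $c\neq 0$. Given a real sequence with $|a(n)|\le M$ (the case $M=0$ being trivial), I would choose $c$ with $4cM<1$, so that the rescaled sequence takes values in $[-cM,cM]$ and its difference sequence takes values in $[-2cM,2cM]$; both intervals have length strictly less than $1$. On an interval of length $<1$ the canonical projection $\pi:\R\to\R/\Z$ is injective, hence a homeomorphism onto its image. Applying $\pi$ coordinatewise then gives a conjugacy between the subshift generated by $\{c\,a(n)\}$ and the torus subshift generated by $\{\pi(c\,a(n))\}$, and likewise between the two difference sequences, because $\pi(c\,a(n+1))-\pi(c\,a(n))=\pi(c\,a(n+1)-c\,a(n))$ in $\R/\Z$ and the argument lies in $[-2cM,2cM]$. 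Combining these conjugacies with Theorem~\ref{231211thm1} applied to $\{\pi(c\,a(n))\}$ at $d=1$ would yield $h(\{a(n)\})=h(\{a(n+1)-a(n)\})$ --- in fact equality of entropies, which is stronger than the zero-entropy equivalence.

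For the complex reduction, I would write $x(n)=a(n)+i\,b(n)$ and regard $\{x(n)\}$ as the $\R^2$-valued sequence $\{(a(n),b(n))\}$. The coordinate projections are factor maps, so $h(\{a(n)\}),h(\{b(n)\})\le h(\{x(n)\})$; conversely the orbit closure of $\{(a(n),b(n))\}$ embeds into the product of the orbit closures of $\{a(n)\}$ and $\{b(n)\}$ equipped with the product shift, whence $h(\{x(n)\})\le h(\{a(n)\})+h(\{b(n)\})$. Thus $h(\{x(n)\})=0$ if and only if $h(\{a(n)\})=0$ and $h(\{b(n)\})=0$, and the same holds for the difference sequence. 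The real case above then gives $h(\{a(n)\})=0\Leftrightarrow h(\{a(n+1)-a(n)\})=0$ and similarly for $b$, and these chain together to the desired equivalence. This subadditivity under joinings is precisely why the complex statement is phrased as a zero-entropy equivalence rather than as equality.

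The main obstacle I anticipate is bookkeeping around the embedding rather than a deep difficulty: I must verify that the orbit closures genuinely remain inside the regions where $\pi$, and its restriction to the difference range, is injective, so that the coordinatewise maps are honest homeomorphisms of the relevant compact subsystems and not merely continuous surjections. Since a uniformly bounded sequence and all of its shifts take values in the same compact interval, the orbit closure lies in $[-cM,cM]^{\N}$ and the analogue for differences lies in $[-2cM,2cM]^{\N}$; checking that $\pi$ restricted to these closed intervals of length $<1$ is a homeomorphism, and that it intertwines the shift with the difference operation, is the only point requiring genuine care.
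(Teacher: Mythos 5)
Your proposal is correct and takes essentially the same route as the paper's proof: rescale a bounded real sequence by a small constant so that it and its difference sequence embed injectively into $\R/\Z$, apply Theorem~\ref{231211thm1} with $d=1$, and reduce the complex case to the real case by splitting into real and imaginary parts, using that zero entropy of the complex sequence is equivalent to zero entropy of both parts. The only distinction is one of detail: you re-derive the supporting facts (entropy invariance under the coordinatewise embedding, and the zero-entropy equivalence for the real/imaginary splitting) via conjugacy and factor-map/product arguments, where the paper simply invokes Lemma~\ref{231210cor1}.
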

Here, we remark that Corollary \ref{231210thm2} also positively answers a question in \cite{GW},
regarding whether a sequence is deterministic if its difference is deterministic.
Moreover, many results in \cite{GW} are applications of Corollary \ref{231210thm2}.

In the theory of uniform distribution,
another topic is the study of the distributions of sequences $\{a(n)x\}_{n\in \N}$,
where $\{a(n)\}_{n\in\N}$ is a given integer sequence and $x$ is a real number.
Therefore, it is also natural to consider their entropy.
As an application of Theorem \ref{231211thm1},
we have the following theorem.

\begin{thm}\label{240109thm1}
Let $\{a(n)\}_{n\in\N}$ be a sequence of integers such that $\{ a(n+1)-a(n)\}_{n\in \N}$ is bounded uniformly.
Then for almost all real numbers $x$, the sequences $a_x(n):= a(n)x \bmod 1 $ have the same entropy.
\end{thm}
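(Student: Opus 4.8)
The plan is to reduce, via Theorem~\ref{231211thm1}, to the first difference of $a_x$ and to exploit that this difference takes only finitely many values. Concretely, I would set $b(n):=a(n+1)-a(n)$; by hypothesis $\{b(n)\}_{n\in\N}$ is a uniformly bounded integer sequence, so it takes values in a finite alphabet $A=\{c_1,\dots,c_k\}\subset\Z$. Viewed as a sequence over $A$, it generates a subshift of some fixed topological entropy $h_0$, and crucially $h_0$ does not depend on $x$. The goal is then to show that for almost every $x$ the entropy of $a_x$ equals this common value $h_0$.

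Next I would fix $x\in\R$ and apply Theorem~\ref{231211thm1} with $d=1$ to the torus-valued sequence $a_x$. Since
$$a_x(n+1)-a_x(n)=\big(a(n+1)-a(n)\big)x = b(n)x \bmod 1,$$
the theorem gives that $a_x$ and $\{b(n)x\bmod 1\}_{n\in\N}$ have the same entropy. Thus it remains only to show that, for almost every $x$, the torus-valued sequence $\{b(n)x\bmod 1\}$ has entropy exactly $h_0$.

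To this end I would introduce the relabeling $\phi_x\colon A\to\R/\Z$, $\phi_x(c)=cx\bmod 1$. For $c\neq c'$ one has $\phi_x(c)=\phi_x(c')$ precisely when $(c-c')x\in\Z$, i.e. $x\in\frac{1}{c-c'}\Z$; the union of these finitely many arithmetic progressions is countable, hence Lebesgue-null. For every $x$ outside this null set, $\phi_x$ is injective, so the coordinatewise map $\Phi_x\colon A^\N\to(\R/\Z)^\N$ is a continuous injection, hence a homeomorphism onto its compact image, and it commutes with the shift. Consequently $\Phi_x$ restricts to a topological conjugacy between the subshift generated by $\{b(n)\}$ and the orbit closure of $\{b(n)x\bmod 1\}$. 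As topological entropy is a conjugacy invariant, the latter has entropy $h_0$, and combining with the previous step shows that $a_x$ has entropy $h_0$ for almost every $x$, which is the assertion.

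The hard part will be the conjugacy step: one must check, against the precise definition of the entropy of a sequence adopted in this paper, that the injective relabeling $\Phi_x$ carries the orbit closure of $\{b(n)\}$ onto that of $\{b(n)x\bmod 1\}$ while intertwining the shifts, so that entropy is genuinely preserved. This should follow routinely from the facts that a continuous injection of a compact space is a homeomorphism onto its image and that such a map sends orbit closures to orbit closures; but it is exactly the place where the finiteness of the alphabet $A$ — guaranteed by the uniform boundedness hypothesis on $\{a(n+1)-a(n)\}$ — is indispensable.
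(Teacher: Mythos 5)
Your proposal is correct and follows essentially the same route as the paper: the paper also passes to the difference $d(n)=\Delta a(n)$, notes that for irrational $x$ the map $j\mapsto jx \bmod 1$ injectively relabels the finite alphabet so that $\AE(\Delta a_x)=\AE(\Delta a)$, and then invokes Theorem~\ref{231211thm1} to transfer this to $a_x$. The only cosmetic difference is that the paper verifies entropy preservation under the relabeling by the block-counting formula (Lemma~\ref{lem:Jblock}) rather than by your topological-conjugacy argument, which amounts to the same thing.
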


When the condition on the differences of $a(n)$ is not satisfied,
we have the following result.


\begin{thm}\label{240112thm1}
Let $\{a(n)\}_{n\in\N}$ be an increasing sequence of distinct integers such that
$$\lim_n a(n+1)/a(n)= p,$$
where $p$ can be non-integers.
Then for almost all real numbers $x$, the sequence $a_x(n):= a(n)x \bmod 1 $ has entropy at least $\log p$.
\end{thm}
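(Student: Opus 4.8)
The plan is to bound, for each fixed $\delta>0$, the Lebesgue measure of the \emph{bad set}
$$A_\delta=\Big\{x\in[0,1):\ h(a_x)<\log p-\delta\Big\},$$
where $h(a_x)$ denotes the entropy of the sequence $a_x$, and then take a countable union over $\delta\downarrow 0$ to conclude $h(a_x)\ge\log p$ for almost every $x$. I will use the standard description of this entropy as the exponential growth rate of the number of resolvable length-$k$ blocks: writing $\|t\|$ for the distance from $t$ to the nearest integer, let $N_x(k,\epsilon)$ be the maximal number of positions $m$ whose windows $(a_x(m),\dots,a_x(m+k-1))$ are pairwise $\epsilon$-separated in the sup-metric on $(\R/\Z)^k$, so that $h(a_x)=\lim_{\epsilon\to0}\limsup_k\tfrac1k\log N_x(k,\epsilon)$. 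The conceptual idea is that low block complexity forces the orbit to repeat patterns, which in turn forces $x$ to satisfy many simultaneous Diophantine inequalities; the asymptotically geometric growth of $a(n)$ makes these inequalities lacunary, hence ``nearly independent'', so that only a null set of $x$ can satisfy them.

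First I would make the reduction quantitative. Fix $p''$ with $pe^{-\delta}<p''<p$. If $x\in A_\delta$, then $\limsup_k\tfrac1k\log N_x(k,\epsilon)<\log p-\delta<\log p''$ for \emph{every} $\epsilon$, so $N_x(k,\epsilon)\le (p'')^{k}$ for all large $k$. Considering the $(p'')^{k}+1$ windows starting at positions $1,\dots,(p'')^{k}+1$, these cannot be pairwise $\epsilon$-separated, so two of them, at positions $m<m'$, lie within $\epsilon$ of each other; setting $d_j:=a(m'+j)-a(m+j)$ (a positive integer, as $a$ is strictly increasing) this gives
$$\|d_j x\|\le \epsilon\qquad(0\le j<k).$$
Hence $x$ lies in the set $E_k$ of reals admitting \emph{some} such pair. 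The structural point is that, because $a(n+1)/a(n)\to p$, one has $d_{j+1}/d_j\to p$ \emph{uniformly} in the pair $(m,m')$ once the indices are large (for exact ratios $p$ the quotient is identically $p$, and a perturbation argument keeps it bounded below by some $\rho>1$). Passing to every $s$-th term with $p^{s}\ge 3$ then yields a genuinely lacunary sequence $e_i:=d_{is}$ with Hadamard gap $\ge 3$ and $\|e_i x\|\le\epsilon$.

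The third ingredient is a quasi-independence estimate: for any integers $e_0<e_1<\cdots<e_{L-1}$ with $e_{i+1}/e_i\ge 3$,
$$\operatorname{Leb}\big\{x:\ \|e_i x\|\le \epsilon\ \text{for all } i<L\big\}\le (C\epsilon)^{L},$$
which I would establish by majorizing each indicator $\ut[\|e_i x\|\le\epsilon]$ by a Selberg-type trigonometric polynomial of narrow spectrum and expanding the product, the lacunarity guaranteeing that every non-constant term integrates to $0$. Combining the three steps with a union bound over the at most $(p'')^{2k}$ candidate pairs gives
$$\operatorname{Leb}(E_k)\le (p'')^{2k}\,(C\epsilon)^{\lfloor k/s\rfloor}=\big((p'')^{2s}C\epsilon\big)^{\lfloor k/s\rfloor}.$$
Choosing $\epsilon$ small enough that $(p'')^{2s}C\epsilon<1$ makes $\sum_k\operatorname{Leb}(E_k)<\infty$, so Borel--Cantelli forces $\operatorname{Leb}(\limsup_k E_k)=0$; since every $x\in A_\delta$ lies in $E_k$ for all large $k$, we obtain $\operatorname{Leb}(A_\delta)=0$.

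The step I expect to be the main obstacle is the interaction between the lacunarity bookkeeping and the measure estimate. One must verify that the Hadamard gap $d_{j+1}/d_j\ge\rho>1$ holds with a ratio bounded away from $1$ simultaneously for all admissible pairs $(m,m')$ — discarding the finitely many initial indices where $a(n+1)/a(n)$ has not stabilized near $p$, and checking that the gap tends to $p$ regardless of the separation $m'-m$. The quasi-independence inequality is classical for lacunary sequences, but its constant $C$ and the gap threshold must be arranged (through the choice of $s$) so that the geometric decay of $\operatorname{Leb}(E_k)$ survives the quadratic factor $(p'')^{2k}$ coming from the pair count; this is exactly where the freedom to take $\epsilon$ arbitrarily small (afforded by the $\epsilon\to0$ in the entropy) is used.
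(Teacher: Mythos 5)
Your reduction scaffolding (separated-window characterization of entropy, pigeonhole, Borel--Cantelli) is coherent, and the step you flagged as the likely obstacle --- uniformity of the Hadamard gap --- is actually fine: since $a(m'+j)-a(m+j)\ge(1-o(1))\frac{p-1}{p}a(m'+j)$, the errors in $a(n+1)/a(n)\to p$ perturb $d_{j+1}/d_j$ by $o(1)$ uniformly over pairs. The genuine failures are elsewhere. First, your quasi-independence lemma is false as stated: for a lacunary sequence with gap ratio $q$, the per-step contraction is $\max(1/q,\,C\epsilon)$, not $C\epsilon$. For $e_i=3^i$ the set $\{x:\|3^ix\|\le\epsilon,\ i<L\}$ contains $[0,\epsilon 3^{-(L-1)}]$, so its measure is at least $\epsilon\,3^{-(L-1)}$, vastly larger than $(C\epsilon)^L$ in the small-$\epsilon$ regime you need. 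Consequently a single pair-event $\{x:\|d_jx\|\le\epsilon,\ j<k\}$ has an intrinsic measure floor of order $\epsilon\,p^{-k}$: for $a(n)=p^n$ with $p$ an integer one has $d_j=p^{m+j}(p^{m'-m}-1)$ and the event has measure exactly $2\epsilon p^{-(k-1)}$. No choice of the thinning parameter $s$ or of $\epsilon$ beats this, because you only ever gain a single factor of $\epsilon$, not $\epsilon^{k}$. The union bound over $\binom{(p'')^k+1}{2}\sim\tfrac12(p'')^{2k}$ pairs then gives $\mathrm{Leb}(E_k)\lesssim\epsilon\bigl((p'')^2/p\bigr)^k$, which diverges as soon as $p''>\sqrt p$ --- and the theorem forces you to take $p''$ arbitrarily close to $p$.

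Second, and more fundamentally, this is not merely a lossy union bound: the event $E_k$ itself is typical, not rare, once $p''>\sqrt p$. Entropy $\log p$ means the length-$k$ windows occupy roughly $p^k$ distinguishable $\epsilon$-balls, so by a birthday-paradox count a full-entropy $x$ is already expected to have an $\epsilon$-close pair among its first $(p'')^k+1$ windows when $(p'')^{2k}\gg p^k$ (for $a(n)=p^n$ this is the statement that a random base-$p$ digit string of length $(p'')^k$ contains two nearly identical length-$k$ subwords, which holds with probability tending to $1$). So $\mathrm{Leb}(E_k)\not\to0$, Borel--Cantelli cannot be applied, and the pigeonhole reduction to a \emph{single} close pair has discarded the real content of the hypothesis: low entropy forces the windows into few balls and hence produces huge \emph{clusters} of coincidences, which a single-pair event cannot detect. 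As structured, your argument can prove at best $\mathrm{entropy}(a_x)\ge\tfrac12\log p$ almost everywhere (by taking $p''<\sqrt p$), and repairing it would require measure estimates for multi-coincidence events --- essentially a new argument. The paper sidesteps this square-root barrier entirely by proving the dual, positive statement: for each fixed target block $\mathbf{b}$ over the grid $\{0,\frac1M,\dots,\frac{M-1}{M}\}$, the set of $x$ whose finite-range approximant $f_{x,M}$ realizes $\mathbf{b}$ along an arithmetic progression of positions has full measure; this is done by a nested-interval (Cantor-type) argument in which every sufficiently long interval captures a fixed proportion of the good set, and the theorem follows from a countable intersection over blocks, with no exponential union bound anywhere.
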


All these results lead us to define a notion of dual entropy (cf. Definition \ref{240117defi1}),
inspired by measuring the complexity of an integer sequence by its action on $\R/\Z$.
Now, for an increasing sequence $a(0)<a(1)<\cdots$ in $\N$, we have two different notions of entropy to measure its complexity:
one is the dual entropy of $a(n)$, if it exists;
the other one is the entropy of the characteristic function $1_A(n)$ of $A=\{a(0),a(1),...\}$.
We find that, in general, these two entropy are not equal.
One discussed example is the set of square-free numbers (see Proposition \ref{240116prop1}).
We also show that
these two entropy will not be significantly different (see Theorem \ref{240113thm1}.)

The paper is organized as follows.
Section \ref{sect:pre} reviews the definitions and
properties of (anqie) entropy that will be used throughout the paper.
In Section \ref{240116sec1} we prove Theorem \ref{231211thm1} and Corollary \ref{231210thm2}.
In Section \ref{240115sec1}, we discuss a specific case,
when $a(n) = p^n$ for some positive integer $p \geq 2$.
In Section \ref{section240103}, we prove Theorem \ref{240109thm1} and Theorem \ref{240112thm1}, and give an example of a sequence $a(n)$ which satisfies conditions in Theorem \ref{240112thm1} but $a_x(n)$ has entropy arbitrarily large for almost all $x$.
In Section \ref{section_twoentropy}, we provide some properties of the dual entropy and compare it with anqie entropy.
The dual entropy of the square-free sequence is calculated there.
In Section \ref{sect:furstenberg}, we discuss the relation between the entropy and Furstenberg’s
$\times2\times3$ conjecture.
Finally, we offer some discussions on the M\"{o}bius disjointness conjecture in Section \ref{sect:sarnak}.

\section{Preliminaries on anqie entropy of sequences} \label{sect:pre}

In this section, we shall revisit the definition of anqie and anqie entropy, and discuss some properties of anqie entropy.
We refer readers to \cite{ge2016,wei2022anqie} for more details.

There are multiple ways to construct a dynamical system for a given sequence,
all of which yield the same dynamical system and, consequently, the same topological entropy.
In this paper, we construct the dynamical system mainly from the perspective of C*-algebras.
This point of view was first introduced by Ge in \cite{ge2016}.
The reason we prefer to use C*-algebras is that the difference operator is related to the algebraic structure.
The dynamical system of the sequence (as well as its corresponding C*-subalgebra) is called anqie,
and the entropy of this dynamical system is called anqie entropy.
Note that when the sequence takes values from a finite set, such as the M\"{o}bius function, this construction is equivalent to the one given by Sarnak in \cite{sarnak2011lectures}.

Let $\ell^{\infty}(\mathbb{N})$ be the algebra of all uniformly bounded, complex-valued functions on $\N$.
It is an abelian C*-algebra, with its unit element being the constant function $\textbf{1}$.
By Stone-Gelfand-Naimark theory,
any unital C*-subalgebra $\FAA$ of $\ell^{\infty}(\mathbb{N})$ is *-isomorphic to $C(X_\FAA)$,
the algebra of continuous functions on the space $X_\FAA$ of maximal ideals  of $\FAA$.
For every such $\FAA$, $X_\FAA$ is a compact Hausdorff space.
To define anqie,
we denote $\sigma_A$ to be the map on $\ell^{\infty}(\mathbb{N})$ given by
\[(\sigma_A f)(n)= f(n+1), \quad \forall f(n)\in  \ell^{\infty}(\mathbb{N}). \]
\begin{defi}[Anqie, \cite{ge2016, wei2022anqie}]
A unital C*-subalgebra $\FAA$ of $\ell^{\infty}(\mathbb{N})$ is called an
 \emph{anqie} if $\sigma_A(\FAA) \subseteq \FAA$.
\end{defi}
Every anqie induces an $\N$-dynamical system,
which is a continuous map $\widehat\sigma_A$ on $X_\FAA$ induced by $\sigma_A$.
Now we describe the map $\widehat\sigma_A$.
Denote by $ \hat n$ the multiplicative state (also known as a maximal ideal, hence a point) in $X_\FAA$ induced by a natural number $n\in \N$,
that is,
\[\hat n (f(\cdot)) = f(n), \quad \forall f\in \FAA.  \]
The mapping $n\mapsto \hat n$ maps $\N$ into a dense subset $\widehat{\N}$ of $X_\FAA$.
Denote by $\widehat\sigma_A$  the map on $\widehat{\N}$ induces by $\sigma_A$, i.e., $\widehat\sigma_A: \hat n \mapsto \widehat {n+1} $.
It can be proved that $\widehat\sigma_A$ can be continuously extended to a map from $X_\FAA$ to $X_\FAA$.
In other words,
$(X_\FAA, \widehat\sigma_A)$ is a topological $\N$-dynamical system.

We are interested in the case when an anqie is generated by one sequence.
In this context, the following is an alternative way to describe the dynamical system $(X_\FAA,
\widehat\sigma_A)$,
which is closely related to the construction given by Sarnak in \cite{sarnak2011lectures}.
Let $f(n)\in \ell^{\infty}(\mathbb{N})$,
and denote $\FAA$ to be the anqie generated by $f$.
Let $X_0= \overline{f(\N)}$ be the closure of the image of $f$.
Every element $g\in \FAA$ is a sequence in $X_0$,
so it is also a point in $X_0^\N$,
meaning that the sequence $g$ is treated as
\[ (g(0), g(1),g(2), ...) \in X_0^\N.\]
From this perspective,
$f$ and its shifts $\sigma_A^m (f)$ can be viewed as points in $X_0^\N$,
that is,
\[ \sigma_A^m (f) = (f(m), f(m+1), f(m+2), ...) \in X_0^\N.\]
The closure of the set $\{\sigma_A^m (f): m\in \N\}$ in $X_0^\N$ is a compact metric space, denoted as $X_f$.
And, the map $\sigma_A^m (f) \mapsto \sigma_A^{m+1} (f) $ extends uniquely to a continuous map on $X_f$,
which we again denoted as $\sigma_A$.
Then, $(X_f, \sigma_A)$ is also an $\N$-dynamical system with transitive point $f\in X_f$.
The dynamical system $(X_f, \sigma_A)$,
by Theorem 3.5 in \cite{wei2022anqie},
is isomorphic to the above construction $(X_\FAA, \widehat\sigma_A)$ as $\N$-dynamical systems.
And this isomorphism maps the point $\hat 0\in X_\FAA$ to the point $f \in X_f$.

It is natural to use ``entropy" to describe the complexity of a sequence.
The anqie entropy of a sequence is based formally on the topological entropy of the corresponding dynamical system.
Here, let us first recall the definition of topological entropy for an $\mathbb{N}$-dynamic systems $(X, T)$, where $X$ is a compact Hausdorff space and $T$ is a continuous map on $X$.
Suppose $\mathcal{U}$ and $\mathcal{V}$ are two open covers for $X$.
Denote by $\mathcal{U} \vee \mathcal{V}$ the open cover containing all intersections of elements from $\mathcal{U}$ and $\mathcal{V}$ (i.e., $\mathcal{U} \vee \mathcal{V}=\{A \cap B$ : $A \in \mathcal{U}, B \in \mathcal{V}\})$,
and by $\CNN (\mathcal{W})$ the minimal number of open sets in $\mathcal{W}$ that will cover $X$. Define
$$
\begin{aligned}
h(T, \mathcal{U}) & =\lim _n \frac{1}{n}\left\{\log \left(\CNN \left(\mathcal{U} \vee T^{-1}(\mathcal{U}) \vee \cdots \vee T^{-n+1}(\mathcal{U})\right)\right)\right\}, \\
h(T) & =\sup _{\mathcal{U}}\{h(T, \mathcal{U}): \mathcal{U} \text { is an open cover of } X\},
\end{aligned}
$$
$h(T)$ is called the \emph{(topological) entropy} of $T$.
Then, the anqie entropy is defined as follows.

\begin{defi}[Anqie entropy, \cite{ge2016}]
Suppose $\FAA \subseteq \ell^{\infty}(\mathbb{N})$ is an anqie.
Define the \emph{anqie
entropy} of $\FAA$,
denoted as $\AE(\FAA)$, to be the topological entropy of the dynamical system $(X_\FAA, \widehat\sigma_A)$.
If $\FAA$ is generated by a bounded sequence $\{f(n)\}_{n\in\N}$,
we also use $\AE(f)$ to denote $\AE(A)$, which is called the \emph{(anqie) entropy} of $f(n)$.
\end{defi}

The anqie entropy has many nice properties. Here are some of them that will be used in this paper.

\begin{lem}[{\cite[Lemma 3.3]{ge2016}}]
\label{some properties about anqie entropy}
Suppose $\mathcal{A}, \mathcal{B} \subseteq l^{\infty}(\mathbb{N})$ are anqies.
Then we have the following:
\begin{enumerate}
\item
$\AE(\mathcal{A}) \geq 0$;
\item
If $\mathcal{A}$ is a subanqie of $\mathcal{B}$, then $\AE(\mathcal{A}) \leq \AE(\mathcal{B})$;
and
\item
For any $f_1, \ldots, f_n \in l^{\infty}(\mathbb{N})$ and any polynomials $\phi_j \in \mathbb{C}\left[x_1, \ldots, x_n\right]$ with $1 \leq$ $j \leq m$, we have
$$
\AE\left(\phi_1\left(f_1, \ldots, f_n\right), \ldots, \phi_m\left(f_1, \ldots, f_n\right)\right) \leq \AE\left(f_1, \ldots, f_n\right).
$$
\end{enumerate}
\end{lem}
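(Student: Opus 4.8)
The plan is to derive all three items from a single principle: a unital inclusion of anqies corresponds, by Gelfand duality, to a factor map of $\N$-dynamical systems, and topological entropy is monotone under factor maps. Item (i) needs none of this machinery and follows directly from the open-cover definition: every covering number $\CNN(\cdot)$ is at least $1$, so $\log\CNN(\cdot)\ge 0$ and hence $h(\widehat\sigma_{\mathcal A},\CUU)\ge 0$ for every open cover $\CUU$ of $X_{\mathcal A}$; taking the supremum gives $\AE(\mathcal A)\ge 0$. The trivial cover $\{X_{\mathcal A}\}$ already realizes the value $0$.

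For item (ii) the first step is to produce, from $\mathcal A\subseteq\mathcal B$, a continuous surjection $\pi\colon X_{\mathcal B}\to X_{\mathcal A}$ by restricting a multiplicative state on $\mathcal B$ to the subalgebra $\mathcal A$. This $\pi$ is surjective because every character of $\mathcal A$ extends to a character of $\mathcal B$, and it is equivariant, $\pi\circ\widehat\sigma_{\mathcal B}=\widehat\sigma_{\mathcal A}\circ\pi$, because both induced shifts come from the single map $\sigma_A$ and restriction commutes with $\sigma_A$. The second step is to read off monotonicity directly from the open-cover definition. For an open cover $\CUU$ of $X_{\mathcal A}$, the pullback $\pi^{-1}(\CUU)$ is an open cover of $X_{\mathcal B}$; equivariance gives $\pi^{-1}\bigl(\widehat\sigma_{\mathcal A}^{-k}\CUU\bigr)=\widehat\sigma_{\mathcal B}^{-k}\pi^{-1}(\CUU)$, and surjectivity forces $\CNN(\pi^{-1}(\CWW))=\CNN(\CWW)$ for every cover $\CWW$ of $X_{\mathcal A}$, since a subfamily of preimages covers $X_{\mathcal B}$ exactly when the corresponding sets cover $X_{\mathcal A}$. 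Hence $h(\widehat\sigma_{\mathcal B},\pi^{-1}\CUU)=h(\widehat\sigma_{\mathcal A},\CUU)$, and because pullback covers form a subfamily of all open covers of $X_{\mathcal B}$,
$$\AE(\mathcal A)=\sup_{\CUU}h(\widehat\sigma_{\mathcal A},\CUU)=\sup_{\CUU}h(\widehat\sigma_{\mathcal B},\pi^{-1}\CUU)\le\sup_{\CVV}h(\widehat\sigma_{\mathcal B},\CVV)=\AE(\mathcal B).$$

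Item (iii) then reduces to item (ii): each $\phi_j(f_1,\dots,f_n)$ is a polynomial in $f_1,\dots,f_n$, hence lies in the anqie generated by $f_1,\dots,f_n$, so the anqie generated by $\phi_1(f_1,\dots,f_n),\dots,\phi_m(f_1,\dots,f_n)$ is a subanqie of it, and (ii) supplies the inequality. I expect the only genuine content to sit in (ii): the covering-number bookkeeping is routine, whereas the step deserving real care is verifying that $\pi$ is well defined, continuous, surjective, and equivariant, i.e.\ that the passage from unital subalgebras to factors of $\N$-dynamical systems is exact. Once this dictionary is in place, monotonicity of topological entropy delivers (ii) and (iii) together.
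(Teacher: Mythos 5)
Your proof is correct, and since the paper quotes this lemma from \cite{ge2016} without giving a proof, there is no in-paper argument to compare it against. Your route --- dualizing the inclusion $\mathcal{A}\subseteq\mathcal{B}$ to an equivariant continuous surjection $X_{\mathcal{B}}\to X_{\mathcal{A}}$ by restricting characters (surjectivity being the standard Gelfand-duality fact for injective unital $*$-homomorphisms of commutative unital C*-algebras), observing that covering numbers are unchanged under pullback along a surjection, and reducing (iii) to (ii) because each $\phi_j(f_1,\ldots,f_n)$ lies in the anqie generated by $f_1,\ldots,f_n$ --- is the standard factor-map monotonicity argument, in substance the proof given in the cited reference.
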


The following lemma is about anqie entropy and algebraic operations.
\begin{lem}[\cite{ge2016,wei2022anqie}]
\label{231210cor1}
Let $\{f(n)\}_{n\in\N}$ and $\{g(n)\}_{n\in\N}$ be two bounded sequences in $\CN$,
then
\begin{enumerate}
\item
    $\AE(f(n+1))= \AE(f(n))$, and $\AE(\overline{f(n)})= \AE(f(n))$;
\item
${\AE}(f(n))= {\AE}(cf(n))$ for any nonzero constant $c$;
\item
${\AE}(f(n)\pm g(n))\leq {\AE}(f(n))+{\AE}(g(n)) $;
and
\item
${\AE}(f(n) g(n))\leq {\AE}(f)+{\AE}(g) $.
\end{enumerate}
\end{lem}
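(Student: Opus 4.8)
The plan is to deduce all four statements from the monotonicity properties already recorded in Lemma~\ref{some properties about anqie entropy}, supplemented by two genuinely dynamical facts: the invariance of topological entropy under complex conjugation and under the shift, and the subadditivity of the joint entropy $\AE(f,g)\le \AE(f)+\AE(g)$. Once these are in hand, the lemma's parts (ii), (iii), (iv) are short, and part (i) is the only place where I must look directly at the dynamical systems.

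For part (i) I would first dispose of conjugation. Since every anqie is a $*$-subalgebra of $\ell^{\infty}(\N)$, the anqie generated by $f$ is automatically closed under the (pointwise) involution, hence contains $\overline{f}$; symmetrically the anqie generated by $\overline{f}$ contains $f$. Thus $\FAA_f=\FAA_{\overline f}$ as subalgebras, so they yield the same dynamical system and $\AE(\overline f)=\AE(f)$. For the shift, $f(n+1)=\sigma_A f$ lies in the $\sigma_A$-invariant algebra $\FAA_f$, so $\FAA_{f(n+1)}\subseteq\FAA_f$ and Lemma~\ref{some properties about anqie entropy}(ii) gives $\AE(f(n+1))\le\AE(f)$. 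For the reverse inequality I would pass to the orbit-closure model $(X_f,\sigma_A)$: here $X_{f(n+1)}=\sigma_A(X_f)$ coincides with $X_f$ up to the single point $f$ (indeed $X_f=\{f\}\cup X_{f(n+1)}$), and such a transient, wandering point contributes nothing to topological entropy, so $\AE(f)\le\AE(f(n+1))$.

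Parts (ii)--(iv) then reduce to algebra together with the subadditivity claim. Writing $\phi(x)=cx$ and $\psi(x)=c^{-1}x$, Lemma~\ref{some properties about anqie entropy}(iii) applied in both directions gives $\AE(cf)\le\AE(f)$ and $\AE(f)=\AE(\psi(cf))\le\AE(cf)$, which is (ii). For (iii) and (iv) I would apply Lemma~\ref{some properties about anqie entropy}(iii) with the two-variable polynomials $x_1\pm x_2$ and $x_1x_2$, obtaining $\AE(f\pm g)\le\AE(f,g)$ and $\AE(fg)\le\AE(f,g)$, thereby reducing both to the single inequality $\AE(f,g)\le\AE(f)+\AE(g)$.

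This last inequality is the heart of the matter, and I expect it to be the main obstacle. The idea is that the orbit closure $X_{(f,g)}$ of the paired sequence $n\mapsto(f(n),g(n))$ embeds, via its two coordinate projections, as a closed $\sigma_A$-invariant subset of the product system $(X_f\times X_g,\ \sigma_A\times\sigma_A)$, the embedding being injective because the two coordinates can be read off separately. Combining the monotonicity of topological entropy under passage to a subsystem with the product formula $h(\sigma_A\times\sigma_A)=h(\sigma_A|_{X_f})+h(\sigma_A|_{X_g})$ then yields $\AE(f,g)\le\AE(f)+\AE(g)$, completing (iii) and (iv). The points requiring care are verifying that the projections really are factor maps onto the individual orbit closures and that the product-entropy formula applies in this compact (metric) setting.
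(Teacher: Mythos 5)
There is nothing in the paper to compare your argument against: Lemma \ref{231210cor1} is imported from \cite{ge2016,wei2022anqie} without proof, so your proposal must be judged on its own. On that basis it is essentially correct and follows the standard route. The conjugation claim in (i) is immediate, as you say, because a C*-subalgebra of $\ell^{\infty}(\N)$ is closed under the involution, so $f$ and $\overline{f}$ generate the same anqie; the inequality $\AE(f(n+1))\le \AE(f)$ follows from Lemma \ref{some properties about anqie entropy}(ii); part (ii) and the reduction of (iii) and (iv) to the single inequality $\AE(f,g)\le \AE(f)+\AE(g)$ via Lemma \ref{some properties about anqie entropy}(iii) are exactly right.

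Three steps deserve to be written out rather than asserted. First, the ``wandering point'' step in (i): if $f\in X_{f(n+1)}$ then $X_f=X_{f(n+1)}$ and there is nothing to prove; otherwise $f$ is an isolated point of $X_f=\{f\}\cup X_{f(n+1)}$, and for any open cover $\mathcal{U}$ of $X_f$ the point $f$ is covered by a single element of the join $\mathcal{U}\vee\sigma_A^{-1}\mathcal{U}\vee\cdots\vee\sigma_A^{-n+1}\mathcal{U}$, so a minimal subcover of $X_f$ is at most one element larger than a minimal subcover of the invariant subset $X_{f(n+1)}$; the extra $1$ vanishes in the exponential growth rate, giving $\AE(f)\le\AE(f(n+1))$. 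Second, your key inequality computes the topological entropy of the orbit closure of the paired sequence $n\mapsto(f(n),g(n))$, but the paper's preliminaries identify anqie entropy with orbit-closure entropy only for a singly generated anqie; you need the finitely generated analogue of Theorem 3.5 of \cite{wei2022anqie} (which does hold), or equivalently a direct equivariant embedding of the Gelfand spectrum of the anqie generated by $f$ and $g$ into $X_f\times X_g$, obtained by evaluating the two generators and their shifts. Third, you invoke the product formula $h(T\times S)=h(T)+h(S)$, but only the subadditive half is needed, and that half is elementary: every open cover of a compact product space is refined by a product of open covers, and $\mathcal{N}(\mathcal{U}\times\mathcal{V})\le\mathcal{N}(\mathcal{U})\,\mathcal{N}(\mathcal{V})$; the reverse inequality would require the variational principle and buys you nothing here. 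With these points made explicit, your proof is complete.
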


We remark here that, there do exist two functions $f(n)$ and $g(n)$ with positive entropy such that ${\AE}(f(n)\pm g(n))= {\AE}(f)+{\AE}(g) $.
For example,
let $f(n)$ be a random sequence taking values from $\{0,1/2\}$,
and let $g(n)$ be a random sequence taking values from $\{0,1/4\}$ independently of $f(n)$.
Then $ f(n)+g(n)$ takes values from $\{0,1/4,1/2,1/4\}$ randomly,
and $ f(n)-g(n)$ takes values from $\{-1/4, 0, 1/4, 1/2\}$ randomly,
hence $\AE(f(n)+g(n))=\log 4=\AE(f(n))+\AE(g(n))$.

The next Lemma is about the lower semi-continuity of anqie entropy.
\begin{lem}[{\cite[Corollary 4.1]{ge2016}}]
\label{semicontinuity}
If $\{f_{N}(n)\}_{n\in\N}$ is a sequence of bounded arithmetic functions converging to $f(n)$ uniformly with respect to $n\in \mathbb{N}$, then $\liminf_{N\rightarrow \infty}\AE(f_{N})\geq \AE(f)$.
\end{lem}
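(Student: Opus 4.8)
The plan is to compute the anqie entropy through Bowen's separated-set formulation and then to \emph{transfer} separated configurations from the limit sequence $f$ into the approximants $f_N$. First I would place all the systems inside one compact metric space: since $f_N \to f$ uniformly, there is a compact $K \subseteq \CN$ containing the range of $f$ and the ranges of all $f_N$ with $N$ large, so that $X_f$ and every $X_{f_N}$ are shift-invariant compact subsets of $(K^\N,\sigma)$ under a single fixed product metric, say $d(x,y)=\sum_{k\ge 0}2^{-k}\min(|x_k-y_k|,1)$, for which $\sigma$ is continuous. On such compact metric systems the open-cover entropy used in the definition coincides with Bowen's separated-set entropy, so I may write $\AE(g)=\lim_{\epsilon\to 0}\limsup_n \frac1n\log s_n(\epsilon,X_g)$, where $s_n(\epsilon,X_g)$ denotes the largest cardinality of an $(n,\epsilon)$-separated subset of $X_g$ in the Bowen metric $d_n(x,y)=\max_{0\le i<n}d(\sigma^i x,\sigma^i y)$.

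The core observation is that uniform convergence produces uniform closeness in every Bowen metric at once. Setting $\delta_N=\sup_m|f_N(m)-f(m)|$, a direct estimate gives $d(\sigma^m f_N,\sigma^m f)\le 2\delta_N$ for all $m$, whence $d_n(\sigma^m f_N,\sigma^m f)\le 2\delta_N$ for all $m$ and all $n$. Now fix $\epsilon>0$ and take an $(n,\epsilon)$-separated set in $X_f$; approximating each of its points within $\epsilon/4$ by a point of the dense orbit $\{\sigma^m f\}$ yields indices $m_1,\dots,m_s$ for which $\{\sigma^{m_j}f\}$ is $(n,\epsilon/2)$-separated of size $s\ge s_n(\epsilon,X_f)$. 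Replacing each $\sigma^{m_j}f$ by $\sigma^{m_j}f_N\in X_{f_N}$ and using the triangle inequality with the bound $2\delta_N$ shows that, once $\delta_N<\epsilon/16$, the family $\{\sigma^{m_j}f_N\}$ is $(n,\epsilon/4)$-separated in $X_{f_N}$. Hence $s_n(\epsilon/4,X_{f_N})\ge s_n(\epsilon,X_f)$ for all $n$ whenever $N$ is large.

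Passing to limits then finishes the argument. For a fixed $\epsilon>0$ and all large $N$, monotonicity of $s_n(\cdot,X_{f_N})$ in the scale gives $\AE(f_N)\ge \limsup_n\frac1n\log s_n(\epsilon/4,X_{f_N})\ge \limsup_n\frac1n\log s_n(\epsilon,X_f)$, so that $\liminf_N\AE(f_N)\ge \limsup_n\frac1n\log s_n(\epsilon,X_f)$; letting $\epsilon\to 0$ yields $\liminf_N\AE(f_N)\ge \AE(f)$, as desired.

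The delicate points are, first, that everything rests on \emph{uniform} rather than merely pointwise convergence: this is exactly what makes the bound $d_n(\sigma^m f_N,\sigma^m f)\le 2\delta_N$ hold uniformly in both the position $m$ and the window length $n$, and without it separation could degrade on long blocks and the estimate would collapse. The conceptual heart, and the reason the inequality runs in this direction, is the transfer step: a small uniform perturbation cannot destroy the separation that witnesses the complexity of $f$, it can only shrink the separation scale slightly, so any entropy present in the limit must already be present, asymptotically, in the approximants. The only genuinely technical ingredient is the orbit-density approximation used to replace an arbitrary separated set in $X_f$ by separated orbit points, which I would handle by the standard $\epsilon/4$-perturbation argument sketched above.
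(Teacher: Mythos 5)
Your argument is correct, but it cannot be compared line-by-line with a proof in the paper, because the paper offers none: Lemma~\ref{semicontinuity} is quoted verbatim from Corollary~4.1 of the cited work of Ge, and the authors use it as a black box. What you have produced is therefore a self-contained alternative proof, and its structure is sound. The two pillars both check out: (a) the uniform bound $d_n(\sigma^m f_N,\sigma^m f)\le 2\delta_N$ holds simultaneously in the position $m$ and the window length $n$, because every coordinate of $\sigma^m f_N$ differs from the corresponding coordinate of $\sigma^m f$ by at most $\delta_N$, so the geometric series bounding $d$ is unaffected by shifting; and (b) the resulting inequality $s_n(\epsilon/4,X_{f_N})\ge s_n(\epsilon,X_f)$ holds for \emph{all} $n$ once $\delta_N<\epsilon/16$, a threshold depending only on $\epsilon$, which is exactly what lets the estimate survive the $\limsup_n$ and then the $\epsilon\to 0$ limit. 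Two points deserve to be made explicit rather than left implicit. First, the orbit-density approximation must be carried out in the Bowen metric $d_n$, not merely in $d$: you need $d_n(x_j,\sigma^{m_j}f)<\epsilon/4$ to conclude $(n,\epsilon/2)$-separation of the orbit points. This is true, but requires the observation that for each fixed $n$ the metrics $d$ and $d_n$ induce the same topology on the compact set $X_f$ (since $\sigma,\dots,\sigma^{n-1}$ are continuous and $d\le d_n$), so the $d$-dense orbit is automatically $d_n$-dense; as stated, your ``standard $\epsilon/4$-perturbation'' glosses over this. Second, you silently identify the paper's open-cover anqie entropy of $f$ with Bowen's separated-set entropy of $(X_f,\sigma)$; this is legitimate via the conjugacy of $(X_{\mathfrak A},\widehat\sigma_A)$ with $(X_f,\sigma_A)$ stated in the paper's preliminaries together with the classical equality of the two entropy notions on compact metric spaces, but the chain of identifications should be cited. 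With those two sentences added, your proof stands on its own, and it has the merit of making the mechanism of lower semicontinuity transparent: a uniformly small perturbation can only shrink separation scales, never destroy separation, so complexity present in the limit is already present in the approximants.
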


We shall be interested when $f(n)$ has a finite range.
In this case,
its entropy is determined by the number of different $J$-blocks appearing in $f(n)$.
Specifically, let $\mathcal{B}_{J}(f)$ denote the set of all $J$-blocks occurring in $f(n)$, i.e.,
$$\mathcal{B}_{J}(f)=\{(f(n),f(n+1),\ldots,f(n+J-1)):n\geq 0\}.$$
Then the entropy of $f(n)$ equals
\begin{equation}\label{formula of finite range}
\lim_{J\rightarrow \infty}\frac{\log|\mathcal{B}_{J}(f)|}{J},
\end{equation}
where $|\mathcal{B}_{J}(f)|$ is the cardinality of the set $\mathcal{B}_{J}(f)$
(see \cite[Lemma 6.1]{wei2022anqie}).
Moreover, a $J$-block of the form
$$\left(f(mJ), f(mJ+1),\dots,f(mJ+J-1)\right)$$
for some $m\in\N$ is called a \emph{regular J-block} of $f(n)$.
A $J$-block that occurs infinitely many times in the sequence $f(n)$ is called an \emph{effective J-block} of $f(n)$.
A $J$-block $\left(\mathbf{b}(0), \mathbf{b}(1), \ldots, \mathbf{b}(J-1)\right)$ of $f(n)$ is called \emph{effectively regular} if there are infinitely many natural numbers $m$ such that
$$
\left(\mathbf{b}(0), \mathbf{b}(1), \ldots, \mathbf{b}(J-1)\right)=(f(m J),f(mJ+1), \ldots, f(m J+J-1)) .
$$
The set of all effectively regular (resp. effective/regular) blocks of $f(n)$ is denoted by $\mathcal{B}_J^{e, r}(f)$ (resp. $\mathcal{B}_J^e(f) $/$B_J^r(f)$.)
\begin{lem}[{\cite[Proposition 2.4]{GW}}]\label{lem:Jblock}
Let $\{f(n)\}_{n\in\N}$ be a sequence with finite range.
Then
    $$\AE(f)=
    \lim\limits_{J\rightarrow\infty}\frac{\log |B_J^r(f)|}{J}
    =\lim\limits_{J\rightarrow\infty}\frac{\log |B_J^{e,r}(f)|}{J}
    =\lim\limits_{J\rightarrow\infty}\frac{\log |B_J^{e}(f)|}{J}.
    $$
\end{lem}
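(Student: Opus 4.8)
The plan is to reduce the whole statement to a single lower bound and then obtain everything else by sandwiching. Since the sequence has finite range, $X_0:=\overline{f(\N)}$ is a finite alphabet and, by \eqref{formula of finite range}, $\AE(f)=\lim_{J}\frac{\log|\mathcal B_J(f)|}{J}$. From the definitions one has the inclusions $B_J^{e,r}(f)\subseteq B_J^{r}(f)\subseteq\mathcal B_J(f)$ and $B_J^{e,r}(f)\subseteq B_J^{e}(f)\subseteq\mathcal B_J(f)$, since an effectively regular block is in particular regular and also occurs infinitely often. Taking logarithms and dividing by $J$, all four ratios are squeezed between $\frac{\log|B_J^{e,r}(f)|}{J}$ and $\frac{\log|\mathcal B_J(f)|}{J}$. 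As the latter already converges to $\AE(f)$, it suffices to prove the single inequality $\liminf_{J}\frac{\log|B_J^{e,r}(f)|}{J}\ge\AE(f)$; in fact I would establish the stronger pointwise bound $\frac{\log|B_J^{e,r}(f)|}{J}\ge\AE(f)$ for every $J\ge1$.

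First I would group the sequence into blocks of length $J$. Fix $J$ and let $z=z^{(J)}$ be the sequence over the finite alphabet $X_0^{J}$ defined by $z_m=(f(mJ),\dots,f(mJ+J-1))$. Writing $P_J(K)$ for the number of distinct $K$-blocks of $z$, equivalently the number of $KJ$-blocks of $f$ occurring at positions divisible by $J$, the orbit-closure subshift $X_z=\overline{\{S^{k}z:k\ge0\}}$ has entropy $h(X_z)=\lim_{K}\frac{\log P_J(K)}{K}$. An elementary two-sided count identifies this with $J\AE(f)$: on one hand $P_J(K)\le|\mathcal B_{KJ}(f)|$, and on the other hand every $KJ$-block of $f$ at an arbitrary position $qJ+s$ with $0\le s<J$ is a length-$KJ$ subword of the $(K+1)J$-block of $f$ based at $qJ$, so $|\mathcal B_{KJ}(f)|\le J\,P_J(K+1)$. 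Dividing by $K$ and letting $K\to\infty$ gives $\lim_{K}\frac{\log P_J(K)}{K}=J\cdot\lim_{K}\frac{\log|\mathcal B_{KJ}(f)|}{KJ}=J\AE(f)$.

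The heart of the argument is the claim that the topological entropy of a subshift is carried by its recurrent part: for any sequence $z$ over a finite alphabet with $\omega$-limit set $\omega(z)=\bigcap_{N}\overline{\{S^{n}z:n\ge N\}}$, one has $h(\omega(z))=h(X_z)$. The inequality $h(\omega(z))\le h(X_z)$ is trivial. For the reverse I would invoke the variational principle $h(X_z)=\sup_{\mu}h_{\mu}(S)$ together with the observation that every $S$-invariant Borel probability measure $\mu$ on $X_z$ satisfies $\mu(\omega(z))=1$: by Poincaré recurrence $\mu$-almost every point is recurrent, and a recurrent point of $X_z$ necessarily lies in $\omega(z)$ (a recurrent point in the forward orbit lies in its own $\omega$-limit, which equals $\omega(z)$, and any other point of $X_z$ is already a limit point of the orbit). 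Hence the supremum may be taken over measures living on $\omega(z)$, which yields $h(X_z)=h(\omega(z))$. Since the $K$-blocks occurring in points of $\omega(z)$ are exactly the blocks of $z$ occurring infinitely often, the length-$K$ language of $\omega(z)$ is $B_K^{e}(z)$, and therefore $h(\omega(z))=\lim_{K}\frac{\log|B_K^{e}(z)|}{K}$.

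Applying this with $z=z^{(J)}$ closes the loop. The effective-block counts $|B_K^{e}(z)|$ are submultiplicative in $K$, because the prefix and suffix of an infinitely occurring block occur infinitely often, so $\lim_{K}\frac{\log|B_K^{e}(z)|}{K}=\inf_{K}\frac{\log|B_K^{e}(z)|}{K}\le\log|B_1^{e}(z)|$; and a $1$-block of $z$ occurring infinitely often is precisely an effectively regular $J$-block of $f$, so $|B_1^{e}(z)|=|B_J^{e,r}(f)|$. Combining, $J\AE(f)=h(X_z)=h(\omega(z))\le\log|B_J^{e,r}(f)|$, which is the desired pointwise bound. Together with the sandwich from the first paragraph, all four limits equal $\AE(f)$. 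I expect the only genuine obstacle to be the entropy-carried-by-recurrence claim $h(\omega(z))=h(X_z)$; the remaining steps are bookkeeping with block counts and the same subadditivity already underlying \eqref{formula of finite range}.
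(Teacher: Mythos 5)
Your proposal is correct, but there is nothing in the paper to compare it against: the paper does not prove this lemma at all, it imports it verbatim from \cite[Proposition 2.4]{GW}. What you have written is therefore a self-contained replacement for that external citation, and it holds up. The reduction is sound: the inclusions $B_J^{e,r}(f)\subseteq B_J^{r}(f)\subseteq\CBB_J(f)$ and $B_J^{e,r}(f)\subseteq B_J^{e}(f)\subseteq\CBB_J(f)$, together with \eqref{formula of finite range}, squeeze all four quantities once one proves the lower bound for effectively regular blocks. Your two-sided count $P_J(K)\le|\CBB_{KJ}(f)|\le J\,P_J(K+1)$ correctly identifies $h(X_z)=J\,\AE(f)$ for the block sequence $z=z^{(J)}$, and the heart of the argument --- that entropy is carried by the recurrent part, $h(\omega(z))=h(X_z)$, proved via the variational principle plus Poincar\'e recurrence (every invariant measure gives full mass to recurrent points, and every recurrent point of an orbit closure lies in $\omega(z)$, since $\omega(S^k z)=\omega(z)$ and every point of $X_z$ off the orbit is already in $\omega(z)$) --- is a valid and standard piece of ergodic theory, applicable here because a subshift over a finite alphabet is a compact metric system. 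The remaining steps (the language of $\omega(z)$ equals the effective blocks of $z$, Fekete subadditivity of effective-block counts, and $|B_1^{e}(z)|=|B_J^{e,r}(f)|$) are all correct bookkeeping. Your route buys something slightly stronger than the stated lemma, namely the pointwise bound $\log|B_J^{e,r}(f)|\ge J\,\AE(f)$ for every single $J$, not merely in the limit; the price is reliance on the variational principle and Poincar\'e recurrence, machinery that the combinatorial statement itself does not obviously demand and that the cited source \cite{GW} presumably avoids. Within the conventions of this paper, which already takes \eqref{formula of finite range} as given, your argument is a legitimate, complete proof.
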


The following result implies that any sequence of entropy $\lambda$ can be approached by functions with finite ranges and entropy at most $\lambda$.
\begin{lem}[{\cite[Theorem 1.8]{wei2022anqie}}]
\label{approach by finite ranges with zero anqie entropy}
Suppose $\{f(n)\}_{n\in\N}$ is a sequence taking values in the torus $\R/\Z$ with anqie entropy $\lambda$ $(0\leq \lambda<+\infty)$.
Then for any $N\geq 1$, there is a sequence $\{f_{N}(n)\}_{n\in\N}$ taking values in the torus $\R/\Z$,
such that $\AE(f_{N}) \leq \lambda$ and $\sup_{n\in \mathbb{N}}|f_{N}(n)-f(n)|\leq \frac{1}{N}$.
\end{lem}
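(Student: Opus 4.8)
The plan is to realize the sequence inside its orbit-closure dynamical system, quantize it to finite range so that uniform closeness is automatic, and then bound the entropy of the quantization by comparing its block growth with the open-cover entropy of the original system. First I would fix the system $(X_f,\sigma)$ associated with $f$, so that $\AE(f)=h(\sigma)=\lambda$. For a mesh parameter $M\ge N$ and an offset $s\in[0,1/M)$, partition the torus into the half-open arcs $P_k^{(s)}=[s+k/M,\,s+(k+1)/M)$ and define $f_N(n)$ to be the left endpoint of the arc containing $f(n)$. This $f_N$ has finite range and satisfies $\sup_n|f_N(n)-f(n)|<1/M\le 1/N$ directly, so the only real content is the entropy estimate $\AE(f_N)\le\lambda$.

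Since $f_N$ has finite range, Lemma \ref{lem:Jblock} (equivalently the formula \eqref{formula of finite range}) gives $\AE(f_N)=\lim_J \frac1J\log|\mathcal B_J(f_N)|$, and a $J$-block of $f_N$ is exactly a cell-itinerary of the orbit: the realized $J$-blocks are in bijection with the atoms of $\bigvee_{l=0}^{J-1}\sigma^{-l}\widetilde{\mathcal P}^{(s)}$ that meet the orbit $\{\sigma^n f\}$, where $\widetilde{\mathcal P}^{(s)}=\pi_0^{-1}(\mathcal P^{(s)})$ is the pulled-back partition of $X_f$. Thickening each arc slightly to an open arc $U_k\supseteq\overline{P_k^{(s)}}$ of length just over $1/M$ produces an open cover $\mathcal U$ of $X_f$; every join-atom carrying a realized block then lies in a nonempty element of $\bigvee_{l<J}\sigma^{-l}\mathcal U$. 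Because $h(\sigma,\mathcal U)\le\lambda$ for every open cover, the desired bound would follow once one knows that the number of occupied cells grows no faster than $\CNN\!\big(\bigvee_{l<J}\sigma^{-l}\mathcal U\big)$ up to a subexponential factor.

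The hard part will be precisely this last comparison, namely controlling the boundary effect of the quantization. Because the rounding map is discontinuous along the grid $G_s=\{s+k/M\}$, an orbit passing near a boundary may be split into two distinct symbols, and a naive count loses a factor of $2$ in each coordinate where this happens, yielding only $\AE(f_N)\le\lambda+\log 2$. To kill this factor I would choose the offset $s$ carefully: for a suitable $s$ the orbit visits the $\epsilon_M$-neighborhood of $G_s$ with asymptotically vanishing frequency, so that along the times realizing a given block only $o(J)$ of the coordinates are near a boundary; the boundary contribution is then $e^{o(J)}$, and the number of occupied cells agrees to subexponential order with the number $r_J(\epsilon_M)$ of $(J,\epsilon_M)$-spanning balls. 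Since $\limsup_J\frac1J\log r_J(\epsilon_M)\le h(\sigma)=\lambda$ for every fixed positive scale, this gives $\AE(f_N)\le\lambda$. An averaging argument over $s\in[0,1/M)$, bounding $\int_0^{1/M}|\mathcal B_J(f_N^{(s)})|\,ds$, is an alternative route to producing such a good offset.

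I also note that the estimate is sharp in the limit: Lemma \ref{semicontinuity} gives $\liminf_N\AE(f_N)\ge\AE(f)=\lambda$, so in fact $\AE(f_N)\to\lambda$, although only the upper bound is asserted here. The one genuinely delicate point is the generic choice of grid offset that defeats the boundary blow-up; everything else is the routine dictionary between block counts, partitions, and open covers recorded in Section \ref{sect:pre}.
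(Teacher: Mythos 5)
Your reduction of the lemma to the entropy bound, and your diagnosis that the only real difficulty is the boundary effect of the quantizing partition, are both correct; but the mechanism you propose to defeat that effect does not exist in general, and this is a genuine gap. Your count needs the following: for some offset $s$, every realized $J$-block of $f_N$ has only $o(J)$ coordinates whose $f$-value lies within $\epsilon_M$ of the grid $G_s$. Since blocks of $f_N$ are counted wherever they occur in the sequence, this is a statement about the upper \emph{Banach} density of boundary visits, and it can fail for \emph{every} offset simultaneously. Take $f$ to consist of constant runs of rapidly growing lengths whose values enumerate a dense subset of $\R/\Z$; this sequence has entropy $0$, yet for every $s$ and every $\epsilon>0$ there are arbitrarily long orbit segments lying entirely inside the $\epsilon$-neighborhood of $G_s$, so some realized blocks have \emph{all} of their coordinates bad, for every choice of $s$. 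Averaging over $s$ cannot repair this: Fubini controls Ces\`aro averages over fixed initial segments, never the Banach density (which in this example equals $1$ for all $s$ and all $\epsilon$ at once). Worse, the loss is not an artifact of your counting. One can elaborate the example (let the value in epoch $j$ oscillate as $v_j\pm\delta_j$ following a de Bruijn pattern, with $v_j$ equidistributed, $\delta_j\to 0$, $\sum_j \delta_j=\infty$) to obtain a sequence of entropy $0$ whose pointwise quantization has entropy at least $\log 2$ for almost every offset $s$: discontinuous coding genuinely can create topological entropy, so no ``generic offset'' argument can close the gap, and your method stalls at $\AE(f_N)\le\lambda+\log 2$.

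The proof the paper actually relies on (Appendix \ref{appendixA}, which proves the stronger Lemma \ref{231210lem1}, following \cite[Theorem 6.3]{wei2022anqie}) avoids partitions entirely: the approximating sequence is \emph{not} a pointwise function of $f(n)$. One fixes the open cover of $\R/\Z$ by the arcs $U_i=\left(\frac iN-\frac{1+\epsilon}{2N},\;\frac iN+\frac{1+\epsilon}{2N}\right)$, builds a hierarchy of time scales $t_0=1$, $t_{l+1}=t_l s_l$, and chooses nested subcovers $\mathcal{U}^{(l+1)}\subseteq\bigvee_{j=0}^{s_l-1}(B_x^{t_l})^{-j}\mathcal{U}^{(l)}$ whose cardinalities are controlled by the entropy, $s_l^{-1}\log|\mathcal{U}^{(l+1)}|\le t_l\lambda+2^{-l}$. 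Symbols are then assigned \emph{block by block}: for each regular $t_l$-block one picks a single cover element containing the corresponding orbit point and reads off the centers $i/N$ of its factors. A value of $f$ lying near two arcs is thus given a symbol by a global choice rather than by a discontinuous rule, and the number of effective regular $t_l$-blocks of the resulting approximant is bounded by $|\mathcal{U}^{(l+1)}|$ \emph{by construction}; Lemma \ref{lem:Jblock} then yields the bound $\le\lambda$ with no boundary correction at all. This block-wise, cover-based assignment, kept consistent across the telescoping scales, is precisely the idea missing from your outline.
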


For the convenience of our use,
we strengthen Lemma \ref{approach by finite ranges with zero anqie entropy} to Lemma \ref{231210lem1},
and the idea of the proof comes from that of \cite[Theorem 6.3]{wei2022anqie}.
For completeness, we provide its proof in \ref{appendixA}.
\begin{lem}\label{231210lem1}
Let $\{x(n)\}_{n\in\N}$ be a sequence taking values
in the torus $\R/\Z$.
Suppose $\AE(x(n))=\lambda$,
then for any $\epsilon>0$ and $N\ge 1$, there is a sequence $\{g_{N}(n)\}_{n\in\N}$ taking values in $\{0,\frac{1}{N},\frac{2}{N},..., \frac{N-1}{N}\}$
such that $\AE(g_{N}(n)) \le \lambda$ and $\sup_{n\in \mathbb{N}}|g_{N}(n)-x(n)|\le  \frac{1+\epsilon}{2N}$.
\end{lem}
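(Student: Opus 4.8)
The plan is to reduce to the finite-range approximation already available and then snap the values onto the prescribed grid $G_N:=\{0,\tfrac1N,\dots,\tfrac{N-1}N\}$, exploiting the fact that a fixed, shift-commuting recoding can only decrease block complexity, hence entropy. Throughout I identify $\R/\Z$ with the circle of circumference $1$ equipped with its natural distance.

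First I would invoke the finite-range approximation of Lemma~\ref{approach by finite ranges with zero anqie entropy} with a parameter $M$ chosen large enough that $\tfrac1M\le\tfrac{\epsilon}{2N}$ (for instance $M=\lceil 2N/\epsilon\rceil$). This produces a \emph{finite-range} sequence $f(n)$ taking values in $\R/\Z$ with $\AE(f)\le\lambda$ and $\sup_{n}|f(n)-x(n)|\le\tfrac1M\le\tfrac{\epsilon}{2N}$. Next I would round $f$ onto the grid: writing the finite range of $f$ as $\{y_1,\dots,y_r\}$, define a map $\phi\colon\{y_1,\dots,y_r\}\to G_N$ sending each $y_i$ to a nearest point of $G_N$ (fixing any choice at the midpoints), and set $g_N(n):=\phi(f(n))$. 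Since $G_N$ consists of $N$ equally spaced points, the torus distance from any point to $G_N$ is at most $\tfrac{1}{2N}$, so $|g_N(n)-f(n)|\le\tfrac{1}{2N}$ for all $n$, and the triangle inequality yields
\[
 \sup_{n}|g_N(n)-x(n)|\;\le\;\frac{1}{2N}+\frac{\epsilon}{2N}\;=\;\frac{1+\epsilon}{2N}.
\]
By construction $g_N$ takes values in $G_N$, giving both the grid condition and the approximation bound.

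It then remains to control the entropy. As $g_N=\phi\circ f$ is obtained by applying the fixed map $\phi$ coordinatewise, every $J$-block of $g_N$ is the $\phi$-image of a $J$-block of $f$, so $|\mathcal B_J(g_N)|\le|\mathcal B_J(f)|$ for every $J$. Since both sequences have finite range, formula~\eqref{formula of finite range} (equivalently Lemma~\ref{lem:Jblock}) gives
\[
 \AE(g_N)=\lim_{J\to\infty}\frac{\log|\mathcal B_J(g_N)|}{J}\le\lim_{J\to\infty}\frac{\log|\mathcal B_J(f)|}{J}=\AE(f)\le\lambda,
\]
as required. (Alternatively, one may realize $\phi$ on the finite range as a polynomial and apply Lemma~\ref{some properties about anqie entropy}(iii).)

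The main obstacle is precisely this entropy estimate: nearest-point rounding on $\R/\Z$ is discontinuous, jumping at the midpoints between consecutive grid points, so rounding $x$ itself could in principle manufacture complexity and inflate the entropy. The device that circumvents this is to round only after passing to a finite-range approximant $f$; on a finite set the recoding $\phi$ is automatically continuous, commutes with the shift, and the block-counting inequality makes the entropy monotonicity transparent. The parameter $\epsilon$ merely absorbs the error introduced in the finite-range approximation step—were $x$ already of finite range one could take $\epsilon=0$ and obtain the clean bound $\tfrac{1}{2N}$.
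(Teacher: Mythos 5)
Your proof is correct, and the quantitative bookkeeping ($\tfrac{1}{2N}+\tfrac{\epsilon}{2N}=\tfrac{1+\epsilon}{2N}$) is exactly right, but you take a genuinely different route from the paper. The paper does not derive Lemma \ref{231210lem1} from Lemma \ref{approach by finite ranges with zero anqie entropy}; instead, in \ref{appendixA} it re-runs the open-cover construction underlying that lemma with the grid built in from the outset: it covers $\R/\Z$ by intervals $U_i$ of radius $\frac{1+\epsilon}{2N}$ centered at the grid points, iteratively extracts subcovers $\mathcal{U}^{(l+1)}\subseteq \bigvee_{j=0}^{s_l-1}\left(B_x^{t_l}\right)^{-j}\mathcal{U}^{(l)}$ of the orbit closure with cardinality controlled by the entropy (see \eqref{240111eq2}), defines $g_N$ piecewise on the blocks $[t_l,t_{l+1})$, and bounds the number of effective regular $t_l$-blocks by $\left|\mathcal{U}^{(l+1)}\right|$, concluding with Lemma \ref{lem:Jblock}. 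Your argument is shorter and modular: approximate at the finer scale $M\ge 2N/\epsilon$ using the cited lemma, then apply a fixed nearest-point recoding onto the grid; since a coordinatewise recoding of a finite-range sequence cannot create new $J$-blocks, \eqref{formula of finite range} gives $\AE(g_N)\le\AE(f)\le\lambda$. What the paper's longer proof buys is self-containedness: it never needs to know the precise form of the approximant produced in \cite{wei2022anqie}.

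The one point you must make explicit is that your entropy step stands or falls with the approximant $f$ having \emph{finite range}. The statement of Lemma \ref{approach by finite ranges with zero anqie entropy}, as transcribed in this paper, only says that $f_N$ takes values in $\R/\Z$; read literally it is vacuous (take $f_N=f$), and for such an $f$ your rounding step could indeed inflate entropy, as you yourself observe. The finite-range property is what the source result \cite[Theorem 1.8]{wei2022anqie} actually provides (and what the sentence preceding the lemma in this paper asserts), so your proof is fine once you cite that form explicitly --- but it is an essential hypothesis, not a convenience. Two minor further points: the cited lemma assumes $\lambda<+\infty$, so the case $\lambda=+\infty$ (where the entropy requirement in Lemma \ref{231210lem1} is vacuous and one may round $x(n)$ directly) should be split off; and your parenthetical alternative via polynomial interpolation and Lemma \ref{some properties about anqie entropy}(iii) requires first embedding the finite set of torus values into $\mathbb{C}$ (say via $t\mapsto e^{2\pi i t}$), so the block-counting argument is the one to keep.
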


\section{Entropic van der Corput’s theorem
}\label{240116sec1}
In this section, we prove Theorem \ref{231211thm1} and Corollary \ref{231210thm2}.
The main part of the proof of  Theorem \ref{231211thm1} is the following lemma.

\begin{lem}\label{231211lem1}
Let $\{x(n)\}_{n\in\N}$ be a sequence taking values
in the torus $\R/\Z$.
Then for every $d\ge 1$, the sequence $\{x(n+d) - x(n)\}_{n\in\N}$ has entropy greater than or equal to that of $\{x(n)\}_{n\in\N}$.
\end{lem}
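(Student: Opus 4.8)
First I would translate the statement into the language of the associated dynamical systems and recognize the inequality as the assertion that an \emph{isometric} (compact-group) extension does not increase topological entropy. Fix the circle coordinate $e(t):=e^{2\pi i t}$ and let $\FAA$ be the anqie generated by $u(n):=e(x(n))$ and $\FBB$ the anqie generated by $v(n):=e\big(x(n+d)-x(n)\big)=\sigma_A^{d}(u)\,\overline{u}$. Since an anqie is closed under shifts, conjugation and products, we have $\FBB\subseteq\FAA$, so Lemma~\ref{some properties about anqie entropy} already gives the reverse bound $\AE(y)\le\AE(x)$ and, dually, a factor map $\pi:(X_\FAA,\widehat\sigma_A)\to(X_\FBB,\widehat\sigma_A)$. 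The content of the lemma is the opposite inequality $\AE(x)\le\AE(y)$, i.e.\ that the total system carries no more entropy than its factor. The decisive structural observation is the shape of the fibers of $\pi$: two points of $X_\FAA$ with the same image have identical successive $d$-differences, hence (in the sequence model $X_x\subseteq X_0^{\N}$) differ by a $d$-periodic sequence. Thus each fiber is a compact subset of a single orbit of the $(\R/\Z)^{d}$-action that adds a constant to each residue class $\bmod\,d$, this action commutes with $\widehat\sigma_A$, and the shift restricted to a fiber is an isometry.

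Before the main step I would record the finite-range mechanism that both fixes $\lambda:=\AE(x)$ along approximants and exhibits, combinatorially, why the fiber is harmless. By Lemma~\ref{231210lem1} choose finite-range $g_N$ valued in $\{0,\tfrac1N,\dots,\tfrac{N-1}N\}$ with $\AE(g_N)\le\lambda$ and $\sup_n|g_N(n)-x(n)|\le\frac{1+\epsilon}{2N}$; Lemma~\ref{semicontinuity} then forces $\AE(g_N)\to\lambda$. Writing $h_N(n):=g_N(n+d)-g_N(n)$, the recursion $g_N(n+d+j)=g_N(n+j)+h_N(n+j)$ shows that a $J$-block of $g_N$ is determined by its first $d$ entries together with a $(J-d)$-block of $h_N$, so $|\mathcal B_J(g_N)|\le N^{d}\,|\mathcal B_{J-d}(h_N)|$ and, by the finite-range formula~\eqref{formula of finite range} (equivalently Lemma~\ref{lem:Jblock}), $\AE(g_N)\le\AE(h_N)$. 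This is the combinatorial shadow of ``the fiber adds no entropy'': the initial data contribute only the constant factor $N^{d}$. The difficulty is that $h_N=\Delta g_N$ is a factor of the $x$-system and not of the $y$-system, and since entropy is only lower semicontinuous, the uniform convergence $h_N\to y$ does \emph{not} transfer the bound to $y$.

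The essential step is therefore to prove $\AE(x)\le\AE(y)$ directly on the extension $\pi$ by an open-cover argument. I would take a cover of $X_\FAA$ of the form $\beta\times\gamma$, where $\beta=\pi^{-1}\beta_0$ is pulled back from a finite cover $\beta_0$ of $X_\FBB$ and $\gamma$ is a fixed finite cover of the compact group $G=(\R/\Z)^{d}$, and analyse $\bigvee_{k=0}^{J-1}\widehat\sigma_A^{-k}(\beta\times\gamma)$. Over a cell of the base join $\bigvee_k\widehat\sigma_A^{-k}\beta$ the fiber is covered by sets of the form $\bigcap_k\sigma_k^{-1}\gamma$, a join of $J$ translates of the single cover $\gamma$ of $G$; because translations are isometries of the compact group $G$, the covering number of such a join grows only \emph{polynomially} in $J$. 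Hence the fiber contributes $\lim_J\frac1J\log(\mathrm{poly}(J))=0$, giving $h(\widehat\sigma_A,\beta\times\gamma)\le h(\widehat\sigma_A,\beta_0)\le\AE(\FBB)=\AE(y)$; taking the supremum over all covers yields $\AE(x)\le\AE(y)$, which is the desired inequality.

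The hard part is precisely the point at which van der Corput's theorem is delicate: the translates appearing in the fiber join are the cocycle values $\sigma_k(\omega)$, which depend on the base point $\omega$ and whose variation across a base cell \emph{accumulates} with $k$. Controlling this $\omega$-dependence uniformly over a cell of the base join---so that the polynomial fiber bound holds simultaneously for all $0\le k<J$---is the crux, and it is exactly the isometry (translation-invariance) of the compact fiber that keeps the accumulation from turning exponential. The remaining technical points I would need to settle are that the fibers of $\pi$ are genuine compact subsets of $(\R/\Z)^{d}$-orbits (so that the group-cover estimate applies) and that the whole argument is valid for the non-invertible $\N$-action in the anqie formalism; the finite-range estimate $\AE(g_N)\le\AE(\Delta g_N)$ of the previous paragraph serves as a guide and consistency check for these steps.
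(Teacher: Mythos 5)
Your reduction is sound as far as it goes: the inclusion $\FBB\subseteq\FAA$ gives the easy inequality and a factor map $\pi$, the fibers of $\pi$ are indeed contained in single orbits of the group $G=(\R/\Z)^d$ acting by adding constants along the residue classes mod $d$, and your diagnosis of why the naive finite-range route fails (approximating $x$ and then differentiating only gives semicontinuity in the useless direction) is exactly right. But the proposal has a genuine gap at the step you yourself call the crux: the claim that $\bigvee_{k=0}^{J-1}\widehat\sigma_A^{-k}(\beta\times\gamma)$ has covering number at most $\CNN\bigl(\bigvee_{k}\widehat\sigma_A^{-k}\beta\bigr)\cdot \mathrm{poly}(J)$ is never established. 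Over a single cell of the base join the fiber translation amounts (the cocycle values) depend on the base point $\omega$, and this dependence accumulates with $k$; nothing in the sketch rules out that the number of $\gamma$-cells needed over one base cell grows exponentially in $J$. This is not a routine technicality to be "settled later": it is precisely the content of the statement that isometric extensions carry zero relative topological entropy, which for non-invertible $\N$-systems whose fibers are merely compact subsets of $G$-orbits requires a real argument (for instance Bowen's theorem $h(T)\le h(S)+\sup_{y} h\bigl(T,\pi^{-1}(y)\bigr)$ together with a proof that each fiber has zero fiber entropy). As written, the proposal assumes its conclusion at exactly the point where the work lies. (A minor additional slip: the $G$-action does not commute with the shift; the shift conjugates it by the cyclic rotation of the coordinates of $G$. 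This is harmless since that rotation is an isometric automorphism, but it needs to be said.)

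For comparison, the paper avoids the factor-map formalism entirely and resolves the accumulation problem constructively, in a direction opposite to your second paragraph: it applies Lemma \ref{231210lem1} to the \emph{difference} sequence $x_d$, obtaining a finite-range $f_N$ with $\AE(f_N)\le\AE(x_d)$, and then \emph{integrates}, defining $g_N\approx x$ by summing $f_N$ along progressions of step $d$ while re-anchoring $g_N$ to a discretization of $x$ once every $dN$ steps. The re-anchoring keeps the accumulated error below $2/N$, and the anchor data costs only $d$ values per $dN$ positions, i.e.\ entropy $O\bigl(\tfrac{\log N}{N}\bigr)$; block counting (Lemma \ref{lem:Jblock}) then gives $\AE(g_N)\le \AE(f_N)+O\bigl(\tfrac{\log N}{N}\bigr)$, and Lemma \ref{semicontinuity}, now applied in the correct direction to $g_N\to x$, yields $\AE(x)\le\AE(x_d)$. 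In effect the paper's periodic re-anchoring is exactly the device that controls the cocycle accumulation you left unresolved. If you wish to keep your dynamical route, you would need to import (or reprove in the anqie setting) the fiber-entropy theorem; otherwise the argument should be restructured along the approximate-the-difference-and-integrate lines above.
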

\begin{proof}
Given $d\ge 1$, we denote $x_d(n)=x(n+d)-x(n)$.
Assume $\AE(x_d)=\lambda$.
We will prove the statement by constructing a sequence $\{g_N(n)\}_{n\in\N}$ so that $\lim\limits_{N\rightarrow \infty}\|g_N(n) - x(n)\|_\infty=0 $ and $\lim\limits_{N\rightarrow \infty} \AE(g_N(n))=\lambda$.

By Lemma \ref{231210lem1},
we can find a finite range function $f_N(n)$ taking values in $\{0,\frac{1}{N^2},\frac{2}{N^2},$ $..., \frac{N^2-1}{N^2}\}$, such that
$$\AE(f_N(n)) \le \lambda \quad \text{ and }\quad \sup_{n\in \mathbb{N}}|f_N(n)-x_d(n)|\le  \frac{2}{N^2}.$$
Denote $\mathcal{B}_{J}(f_N)$ to be the set of all $J$-blocks occurring in $f_N(n)$.
By \eqref{formula of finite range}, we have
\[ \lim_{J\rightarrow \infty} \frac{\log |\mathcal{B}_{J}(f_N) | }{J} \le \lambda.\]

Now for every $N\ge d$, we construct a function $g_N(n)$ which takes values in $\{0,\frac{1}{N^2},\frac{2}{N^2},..., \frac{N^2-1}{N^2}\}$.
First,
when
\begin{align}\label{231211eq1}
n\equiv r  \mod (dN)   \quad \text{ for some } \quad    0\le r<d,
\end{align}
define
$$g_N(n) =\frac{m}{N^2}\quad  \text{ if } \quad x(n)\in [\frac{m}{N^2}, \frac{m+1}{N^2} ).$$
In particular, $|g_N(n) - x(n)|\le \frac 1{N^2}$ in this case.
Second,
when $n$ is not of the form \eqref{231211eq1},
assume
$$n=n_1dN +qd+r \quad \text{ for some } \quad n_1\in \N,\; 1\le q<N \text{ and  } 0\le r< d,$$
define
\begin{align}\label{231211eq2}
g_N(n) = g_N(n_1dN+r) + \sum_{k=0}^{q-1} f_N(n_1dN+r+kd) \mod 1.
\end{align}
Since $f_N(n)$ takes values in $\{0,\frac{1}{N^2},..., \frac{N^2-1}{N^2}\}$,
so is $g_N(n)$.
Moreover, in case \eqref{231211eq2},
\begin{align*}
&|g_N(n)- x(n)| \\
= & |g_N(n_1dN+r) + \sum_{k=0}^{q-1} f_N(n_1dN+r+kd)- x(n)|\\
\le & |g_N(n_1dN+r) + \sum_{k=0}^{q-1} x_d(n_1dN+r+kd) - x(n_1dN +qd+r)|  \\
&+| \sum_{k=0}^{q-1} f_N(n_1dN+r+kd)  - \sum_{k=0}^{q-1} x_d(n_1dN+r+kd) |\\
\le & |g_N(n_1dN+r)  - x(n_1dN+r)|+ \sum_{k=0}^{q-1} |f_N(n_1dN+r+kd) - x_d(n_1dN+r+kd) |\\
\le & \frac 1{N^2}+ k \frac 2{N^2} \le \frac2N.
\end{align*}
Therefore, $\lim_{N\rightarrow \infty}\|g_N(n) - x(n)\|_\infty=0 $.

In the following, we estimate the entropy of $g_N(n)$.
Since $g_N(n)$ is a finite range function,
we can use Lemma \ref{lem:Jblock} to compute its entropy by counting its regular $J$-blocks,
that is,
by counting the number of different blocks of the form
\begin{align}\label{231211eq3}
(g_N(mJ), g_N(mJ+1),..., g_N(mJ+J-1) ),\quad m\in \N.
\end{align}
For simplicity,
we may assume $J=KdN, \;K\in\N$, is a multiple of $dN$ (since the limit in Lemma \ref{lem:Jblock} always exists).

To estimate the regular $J$-blocks, we consider the following partition of $\N$ by $J$-blocks of $f_{d} $.
For any block
$\mathbf{b}  \in \mathcal{B}_{J}(f_N)$,
let
\begin{align*}
\BAA_{\mathbf{b}} = \{m\in \N:\; (f_N(mJ) ,f_N(mJ+1), ... , f_N(mJ+J-1)) = \mathbf{b} \}.
\end{align*}
Then
\[\N = \bigsqcup_{\mathbf{b} \in  \mathcal{B}_{J}(f_N) } \BAA_{\mathbf{b}}\]
is a partition of $\N$.

We first count  the number of different blocks of the form
\begin{align}\label{231211eq4}
(g_N(mJ), g_N(mJ+1),..., g_N(mJ+J-1) ),\quad m\in \BAA_{\mathbf{b}}
\end{align}
for some given
\[ \mathbf{b} = (\mathbf{b}(0), \mathbf{b}(1),...,\mathbf{b}(J-1)) \in \mathcal{B}_{J}(f_N).\]
For $0\le j\le J-1$,
assume
\begin{align*}
j = j_1dN +qd+r \quad \text{ for some } \quad 0\le j_1<K,\; 1\le q<N \text{ and  } 0\le r< d.
\end{align*}
By \eqref{231211eq2},
\begin{align*}
g_N(mJ+j) = &g_N((mk+j_1)dN +qd+r) \\
=& g_N((mk+j_1)dN +r ) + \sum_{k=0}^{q-1} f_N((mk+j_1)dN+r+kd) \mod 1\\
=& g_N(mJ+j_1dN +r) + \sum_{k=0}^{q-1} \mathbf{b}(j_1dN+r+kd)\mod 1,
\end{align*}
where the last equation is because $n\in \BAA_{\mathbf{b}}$.
Therefore,
under the condition that $n\in \BAA_{\mathbf{b}}$,
the block
\[(g_N(mJ), g_N(mJ+1),..., g_N(mJ+J-1) )\]
is determined by the values
$$g_N(mJ+j_1dN +r), \quad 0\le j<K,\; 0\le r<d,$$
which has at most $N^{2Kd}$ choices (since each $g_N(mJ+j_1dN +r)$ has form $j/N^2$ for some $0\le j<N^2$).
Therefore, the number of different blocks in \eqref{231211eq4} is at most $N^{2\red{K}d}$.

Now,
since the vector $\mathbf{b}$ has $|\mathcal{B}_{J}(f_N) |$ different choices,
the number of different blocks of form \eqref{231211eq3} is at most
\[N^{2Kd} |\mathcal{B}_{J}(f_N)  |, \]
and hence
\begin{align*}
\AE(g_N) = & \lim_{J\rightarrow \infty} \frac{\log|\mathcal{B}^r_J(g_N)| }{J}\\
\le & \lim_{J\rightarrow \infty} \frac{1}{J} \log \left(N^{2Kd} |\mathcal{B}_{J}(f_N)  | \right) \\
= & \lim_{J\rightarrow \infty} \frac{1}{J} \log (N^{J/N}) +  \lim_{J\rightarrow \infty} \frac{1}{J}\log |\mathcal{B}_{J}(f_N)|  \\
=& \frac{1}{N}\log N + \AE(f_N).
\end{align*}
Since $\lim_{N\rightarrow \infty}\|g_N(n) - x(n)\|_\infty=0 $,
by Lemma \ref{semicontinuity}, we have
\begin{align*}
\AE(x(n)) \le \liminf_{N\rightarrow \infty} \AE(g_N) \le \liminf_{N\rightarrow \infty} \left(\frac{1}{N}\log N + \AE(f_N)\right) \le \lambda,
\end{align*}
which completes the proof.
\end{proof}

\begin{proof}[Proof of Theorem \ref{231211thm1}]
Given $d\ge 1$,
we denote $x_d(n)=x(n+d)-x(n)$,
and denote $\mathcal{A}$ (resp., $\mathcal{B}$) to be the anqie generated by $x(n)$ (resp., $x_d(n)$).
By Lemma \ref{231211lem1},
$\AE(x(n)) \le \AE(x_d(n))$.
On the other hand,
since the functions $n\mapsto x(n+d)$ and $n\mapsto x(n)$ are both in $\mathcal{A}$,
the function $x_d: n\mapsto x(n+d)-x(n) $ is in $\mathcal{A}$ as well.
Then by Lemma \ref{some properties about anqie entropy} (ii), $\AE(\mathcal{B}) \le \AE(\mathcal{A})$,
that is, $\AE(x_d(n)) \le \AE(x(n))$.
So the proof is completed.
\end{proof}

\begin{proof}[Proof of Corollary \ref{231210thm2}]
When $x(n)$ is a uniformly bounded sequence in $\R$ instead of $\R/\Z$,
we can multiply $x(n)$ by a small constant to make it in $[0,1/3)$ (which does not change the entropy of the sequence, see Lemma \ref{231210cor1},)
then Theorem \ref{231211thm1} implies that the sequence still has the same entropy as its difference.
When $x(n)$ takes values in $\CN$,
it has zero entropy if and only if both its real part and imaginary
part are deterministic.
Hence Theorem \ref{231211thm1} implies the statement.
\end{proof}

\section{A special case: entropy of $ p^n x \mod 1$}\label{240115sec1}
From now on let us consider the entropy of sequences $\{a_x(n)\}_{n\in \N}$,
where and in the following we denote $a_x(n):=a(n)x$ for an integer sequence $\{a(n)\}_{n\in \N}$ and a real number $x$.
We start from a special case
when $a(n) = p^n$ for some integer $p\ge 2$.
In this section, we show that
the (anqie) entropy of $a_x(n)$ can be computed by counting the blocks appearing in the expansion of $x$ to the base $p$.
This can be shown by determining the dynamical system of the sequence $a_x(n)$,
which will be a classical $\times p \mod1$ system on the orbit $\overline{\{p^n x\}}$ of $x$.
However, we will use an alternative method instead of describing the dynamical system because, in general,
it is difficult to determine the dynamical system of a sequence.

Without loss of generality we assume the expansion of $x$ to the base $p$ is $$ 0. c(1)c(2)c(3)\cdots,$$
that is,
\begin{align*}
x = \sum_{k=1}^\infty \frac{c(k)}{p^k}.
\end{align*}
Then
\begin{align*}
a_x(n) = p^n x = \sum_{k=1}^\infty \frac{c(k+n)}{p^k},\quad \forall n\in \N.
\end{align*}
Let $L$ be a given integer.
By Lemma \ref{231210lem1},
there is a sequence $g_L(n)$ taking values in $\{j/p^L \mod 1: 0\le j< p^L\}$ such that
\begin{align}
\sup_n |g_L(n) - a_x(n)| < \frac 1{p^{L}} \quad \text{ and } \quad \AE(g_L) \le \AE(a_x).
\end{align}
Let $J>0$.
When
\begin{align*}
(b_1, b_2, ..., b_{L+J})
\end{align*}
is a block in the expansion of $x$ to the base $p$,
meaning we can find $k$ with
\[(b_1, b_2, ..., b_{L+J}) = (c(k+1), c(k+2),..., c(k+L+J)), \]
then we see that
\[g_L(k+j) = \sum_{l=1}^L \frac{b_{l+j}}{p^l} + \frac{\xi_j}{p^L},\quad j=0,...,J-1,\]
for some appropriate choices of $\xi_j\in \{0,1\} $.
So we can build a map from $(L+J)$-blocks of $c(k)$ to $J$-blocks of $g_L$,
by mapping  $(b_1, b_2, ..., b_{L+J})$ to any such block $(g_L(k),..., g_L(k+J-1)) $.
The map may not be injective,
but it is easy to check that it is at most 2-1.
So $|\CBB_J(g_L(n))| \ge \frac12 |\CBB_{L+J}(c(k))|$,
and
\[\AE(a_x) \ge \AE(g_L) =\lim_{J\rightarrow \infty} \frac{\log |\CBB_J(g_L(n))| }{J} \ge \lim_{J\rightarrow \infty} \frac{\log  |\CBB_{L+J}(c(k))| }{J} = \lim_{J\rightarrow \infty} \frac{\log  |\CBB_{J}(x^{(p)})| }{J},
\]
where $\CBB_{J}(x^{(p)})$ denotes the set of length $J$ blocks in the expansion $x^{(p)}$ of $x$ to the  base $p$.

On the other hand,
define
\[f_L(n) = \sum_{l=1}^L \frac{c(l+n)}{p^l},\quad \forall n\in \N.\]
Then the sequence $f_L$ takes values from $\{j/p^L \mod 1: 0\le j< p^L\}$, and $
\sup_n |g_L(n) - a_x(n)| \le \frac 1{p^{L}}$.
Then $J$-blocks of $f_L$ are 1-1 correspond to $L+J$-blocks in $c(k)$,
hence
\[ \AE(f_L) = \lim_{J\rightarrow \infty} \frac{\log |\CBB_J(f_L(n))| }{J} = \lim_{J\rightarrow \infty} \frac{\log  |\CBB_{L+J}(c(k))| }{J} = \lim_{J\rightarrow \infty} \frac{\log  |\CBB_{J}(x^{(p)})| }{J}. \]
By Lemma \ref{semicontinuity},
$\AE(a_x) \le \liminf_{L\rightarrow \infty}\AE(f_{L}) $.
So we have proved the following proposition.
\begin{prop}\label{240115prop1}
Let $a(n) = p^n$ for some positive
integer $p \ge 2$,
$x$ be a real number and $x^{(p)} $ the expansion sequence of $x$ to the base $p$.
Denoting $a_x(n) = a(n) x$,
we have
\[\AE(a_x) = \lim_{J\rightarrow \infty} \frac{\log  |\CBB_{J}(x^{(p)})| }{J} \le \log p. \]
\end{prop}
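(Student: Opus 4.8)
The plan is to exploit the fact that multiplication by $p^n$ acts as the shift on the base-$p$ digit sequence of $x$: writing $x=\sum_{k\ge 1}c(k)/p^k$, one has $a_x(n)=p^n x=\sum_{k\ge 1}c(k+n)/p^k \bmod 1$, so $a_x(n)$ is precisely the real number whose base-$p$ expansion is $0.c(n+1)c(n+2)\cdots$. Hence the complexity of $a_x$ should be governed entirely by the block complexity of the digit sequence $x^{(p)}=(c(1),c(2),\dots)$, and the whole proof reduces to making this correspondence quantitative. The limit $\lim_J \log|\CBB_J(x^{(p)})|/J$ exists by the usual subadditivity $|\CBB_{J_1+J_2}|\le|\CBB_{J_1}|\,|\CBB_{J_2}|$. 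Since $a_x$ takes values in the continuum $\R/\Z$ rather than a finite set, I cannot apply the block-counting formula \eqref{formula of finite range} to $a_x$ directly; instead I will sandwich $\AE(a_x)$ between two finite-range approximants whose block entropies I can compute exactly and which both equal $\lim_J \log|\CBB_J(x^{(p)})|/J$.

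For the upper bound I would introduce the exact $L$-digit truncation $f_L(n)=\sum_{l=1}^{L}c(l+n)/p^l$, which satisfies $\sup_n|f_L(n)-a_x(n)|\le p^{-L}\to 0$ and takes values in the finite set $\{j/p^L\}$. Because $f_L(n)$ reads off precisely the digits $c(n+1),\dots,c(n+L)$ with no rounding, a length-$J$ block of $f_L$ is in bijection with the corresponding window of roughly $L+J$ consecutive digits of $c$, so $\AE(f_L)=\lim_J\log|\CBB_J(x^{(p)})|/J$ for every $L$. Lower semicontinuity of anqie entropy (Lemma \ref{semicontinuity}) then yields $\AE(a_x)\le\liminf_L\AE(f_L)=\lim_J\log|\CBB_J(x^{(p)})|/J$.

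For the matching lower bound I would instead invoke Lemma \ref{231210lem1} to produce a finite-range approximant $g_L$, valued in $\{j/p^L\}$, with $\sup_n|g_L(n)-a_x(n)|<p^{-L}$ and the crucial entropy control $\AE(g_L)\le\AE(a_x)$. The point is then to count the length-$J$ blocks of $g_L$: each such block is, up to a bounded ambiguity, determined by the digit window $c(k+1),\dots,c(k+L+J)$, giving a natural map from $(L+J)$-blocks of $c$ onto $J$-blocks of $g_L$. Chaining $\AE(a_x)\ge\AE(g_L)=\lim_J\log|\CBB_J(g_L)|/J\ge\lim_J\log|\CBB_J(x^{(p)})|/J$ yields the reverse inequality, hence the equality. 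The final estimate $\le\log p$ is then immediate, since the digits lie in $\{0,1,\dots,p-1\}$, forcing $|\CBB_J(x^{(p)})|\le p^J$ and therefore $\lim_J\log|\CBB_J(x^{(p)})|/J\le\log p$.

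The step I expect to be the main obstacle is verifying that the lower bound's counting map is at most finite-to-one (in fact $2$-to-$1$), so that $|\CBB_J(g_L)|\ge\tfrac12|\CBB_{L+J}(x^{(p)})|$. The subtlety is that $g_L$ is a \emph{rounding} of $a_x$, so the rounded value $g_L(n)$ depends not only on the $L$ visible digits $c(n+1),\dots,c(n+L)$ but also, through a carry $\xi\in\{0,1\}$, on the invisible tail $c(n+L+1),c(n+L+2),\dots$; this carry is exactly the source of the non-injectivity. Controlling it—showing it contributes only a bounded, hence entropy-negligible, factor so that both the $\tfrac12$ and the index shift from $J$ to $L+J$ vanish in the limit—is the technical heart of the argument. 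The upper-bound truncation $f_L$ is cleaner precisely because it involves no rounding and hence no carry, which is why I would handle the two bounds by genuinely different approximants rather than a single one.
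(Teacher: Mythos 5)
Your proposal follows essentially the same route as the paper's own argument: the exact truncation $f_L(n)=\sum_{l=1}^{L}c(l+n)/p^l$ together with Lemma \ref{semicontinuity} for the upper bound, and the rounded approximant $g_L$ from Lemma \ref{231210lem1} with the block-counting Lemma \ref{lem:Jblock} and an at-most-$2$-to-$1$ map from $(L+J)$-digit windows of $x^{(p)}$ to $J$-blocks of $g_L$ for the lower bound. The carry ambiguity $\xi_j\in\{0,1\}$ you flag as the technical heart is exactly the point the paper addresses (and, like you, dispatches with a brief "at most $2$-$1$" claim), so your attempt is correct and matches the published proof.
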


\section{Dual entropy}
\label{section240103}
Proposition \ref{240115prop1} shows that,
the entropy of the sequence $\{p^n x \mod 1\}_{n\in \N}$ is equal to the entropy of the expansion of $x$ to the base $p$.
In particular,
$\{p^n x \mod 1\}_{n\in \N}$ have entropy $\log p$ for almost all real numbers $x$.
We believe the same property holds for every integer sequence,
which leads us to define the dual entropy.

\begin{defi}[Dual entropy]\label{240117defi1}
Let $\{a(n)\}_{n\in\N}$ be a sequence of integers.
If the sequences $a_x(n): = a(n)x \bmod 1 $ have the same entropy $\lambda$ for almost all real numbers $x$,
then we say $\{a(n)\}_{n\in\N}$ is a \emph{centered sequence},
and $\lambda$ is called the \emph{dual entropy} of $\{a(n)\}_{n\in\N}$, denoted as $\AE^*(\{a(n)\}_{n\in\N}) = \lambda$.
\end{defi}
It is natural to ask whether all integer sequences are centered.
We believe the answer is ``yes"
since if we replace ``having the same entropy" by ``uniformly distributed mod 1",
then it is a theorem of Weyl (see \cite{weyl4, weyl2,  kuipers2012uniform}): given an integer sequence $\{a(n)\}_{n\in\N}$,
the sequences $\{a_x(n)\}_{n\in \N}$ are uniformly distributed mod 1 for almost all real numbers $x$.
Moreover,
the following lemma (as well as Theorem \ref{240116prop1}) provides a vast class of centered sequences.
Here we define recursively the difference operator $\Delta^k$ on a sequence $x(n)$ by $\Delta^1 x(n)=\Delta x(n)=x(n+1)-x(n)$ and $\Delta^k x(n)=\Delta\left(\Delta^{k-1} x(n)\right)$ for $k \geq 2$.

\begin{lem}\label{240117lem1}
Let $\{a(n)\}_{n\in\N}$ be a sequence of integers such that $\Delta a(n)$ is bounded uniformly.
Then, for any irrational $x$, the sequences $a_x(n):= a(n)x \mod 1 $ have the same entropy,
which is equal to the entropy of $\Delta a(n)$.
\end{lem}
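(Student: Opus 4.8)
The plan is to reduce the computation of $\AE(a_x)$ to the entropy of an integer sequence by passing to the first difference, where Theorem \ref{231211thm1} does essentially all the work. Since $a_x(n)=a(n)x\bmod 1$ is a sequence in $\R/\Z$, applying Theorem \ref{231211thm1} with $d=1$ gives $\AE(a_x(n))=\AE(a_x(n+1)-a_x(n))$. The difference simplifies to $a_x(n+1)-a_x(n)=(a(n+1)-a(n))x=\Delta a(n)\,x\bmod 1$, so the whole problem collapses to proving
\[
\AE\big(\Delta a(n)\,x\bmod 1\big)=\AE(\Delta a(n))
\]
for every irrational $x$, where on the right $\Delta a(n)$ is regarded as a (complex-valued) integer sequence. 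Note that this immediately explains why the answer is independent of $x$: the right-hand side has no $x$ in it.

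Next I would exploit the hypothesis that $\Delta a(n)$ is uniformly bounded. Writing $b(n):=\Delta a(n)$, a uniformly bounded integer sequence takes values in a finite set $\{-M,\dots,M\}$, so both $b(n)$ and $b(n)x\bmod 1$ are finite-range sequences, and their entropies can be computed by the block-counting formula \eqref{formula of finite range}. The decisive observation is that the relabeling $\phi(k)=kx\bmod 1$ is \emph{injective} on $\{-M,\dots,M\}$: if $\phi(k_1)=\phi(k_2)$ then $(k_1-k_2)x\in\Z$, which forces $k_1=k_2$ because $x$ is irrational. Since $b(n)x\bmod 1=\phi(b(n))$ and $\phi$ is injective on the value set, a length-$J$ block of $\phi(b(n))$ determines and is determined by the corresponding length-$J$ block of $b(n)$; hence $|\CBB_{J}(\phi\circ b)|=|\CBB_{J}(b)|$ for every $J$. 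Feeding this into \eqref{formula of finite range} gives $\AE(b(n)x\bmod 1)=\AE(b(n))$, and combining with the first display yields $\AE(a_x(n))=\AE(\Delta a(n))$ for every irrational $x$, as claimed.

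I do not anticipate a serious obstacle. The two steps that deserve care are the invocation of Theorem \ref{231211thm1} (which is precisely where complexity is transferred from $a_x$ to its difference) and the block bijection, whose only nontrivial input is the injectivity of $k\mapsto kx\bmod 1$ supplied by the irrationality of $x$. If anything is delicate, it is bookkeeping rather than substance: the argument consumes the uniform-boundedness hypothesis exactly to guarantee that $\Delta a(n)$ has finite range, so that \eqref{formula of finite range} is applicable to \emph{both} sequences; without finiteness of the value set one could not convert the injective relabeling into an equality of block counts, and this is the single place where the hypothesis is genuinely used.
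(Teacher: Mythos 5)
Your proposal is correct and follows essentially the same route as the paper's own proof: both reduce $\AE(a_x)$ to $\AE(\Delta a_x)$ via Theorem \ref{231211thm1}, identify $\Delta a_x(n)=\Delta a(n)\,x \bmod 1$, and use the irrationality of $x$ to make $j\mapsto jx \bmod 1$ injective on the finite value set, so that $J$-blocks are in bijection and the block-counting formula \eqref{formula of finite range} gives $\AE(\Delta a_x)=\AE(\Delta a)$. The only difference is cosmetic ordering of when the van der Corput step is invoked.
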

\begin{proof}
Since $\Delta a(n)$ is bounded,
we may assume  $-L\le a(n+1)-a(n) \le L$ for every $n$.
The sequence $d(n):= \Delta a(n)$ takes values in $\{-L,...,L\}$,
hence, it is a finite range function.
Let $\lambda = \AE(d(n))$.
In the following, we show that,
when $x$ is not a rational number,
the sequence $a_x(n) $ has entropy $\lambda$.

Consider the sequence $(\Delta a_x):= a_x(n+1) - a_x(n) = d(n) x \mod 1$.
The sequence $\Delta a_x$ takes values in $\{-xL,-xL+x...,xL\}\mod 1$.
Moreover,
since $x$ is irrational, the numbers $-xL,...,xL \mod 1$ are distinct.
So the map $\phi: j\mapsto (jx \mod 1)$ is a bijection from $\{-L,-L+1,...,L\} $ to $\{-xL,...,xL\}\mod 1$,
and it extends to a bijection from the $J$-blocks of $d(n)$ to the $J$-blocks of $\Delta a_x(n)$.
Hence $\AE (\Delta a_x(n)) = \lambda$.
By Theorem \ref{231211thm1},
$\AE (a_x(n)) = \lambda$.
\end{proof}
Note the Theorem \ref{240109thm1} is a corollary of Lemma \ref{240117lem1}.
The above proof also works for a higher-ordered difference statement:
assume $a(n)$ is a sequence of integers with its  $k$-th difference $\Delta^k a(n)$ uniformly bounded,
then $a_x(n):= a(n)x \mod 1 $ have the same entropy for every irrational $x$.
To see this,
in the above proof we let $d(n):=\Delta^k a(n)$ instead of $d(n) = a(n+1)-a(n)$,
and consider $\Delta^k a_x(n)$ instead of $\Delta a_x(n)$,
then the argument is derived by the same proof.

However,
when $a(n)$ grows exponentially,
the above method is no longer applicable.
In this case, we prove Theorem \ref{240112thm1},
which gives a lower bound of $\AE (a_x(n))$.
Its proof mainly contains two parts.
First is to show the set of $x$ with $\AE(a_x(n))<\log p$ behaves like a Cantor set,
and second is to estimate the measure of the set by its ``dimension" in some sense.
\begin{thm*}[Theorem \ref{240112thm1}, restated]
Let $a(n)$ be an increasing sequence of distinct integers such that
$$\lim_n a(n+1)/a(n)= p,$$
where $p$ may not be an integer.
Then for almost all real numbers $x$, the sequence $a_x(n) = a(n)x \mod 1 $ has entropy at least $\log p$.
\end{thm*}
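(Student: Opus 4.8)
The plan is to show that the exceptional set $E := \{x \in [0,1) : \AE(a_x) < \log p\}$ is Lebesgue-null. Writing $E = \bigcup_{k \ge 1}\{x : \AE(a_x) \le \log p - 1/k\}$, it suffices to fix $\lambda < \log p$ and prove that $S_\lambda := \{x : \AE(a_x) \le \lambda\}$ has measure zero. I would fix a resolution $M > p$, partition $\BTT = \R/\Z$ into $M$ half-open arcs of length $1/M$, and let $u_x(n)$ denote the index of the arc containing $a(n)x \bmod 1$, so that $u_x$ is a finite-range sequence approximating $a_x$. For a generic choice of the arc endpoints (which we may arrange for almost every $x$, discarding a null set), the coding map is continuous on the orbit closure of $a_x$, so $\AE(u_x) \le \AE(a_x) \le \lambda$; combined with Lemma \ref{lem:Jblock}, this gives, for every $\epsilon>0$ and all $J$ beyond a threshold $J_0(x)$, the bound $|B_J^r(u_x)| \le e^{(\lambda+\epsilon)J}$ on the number of distinct \emph{regular} $J$-blocks (windows) of $u_x$. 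Stratifying $S_\lambda$ by the value of $J_0$ and using countable additivity, I may assume this bound holds for all $J \ge J_0$ with a fixed $J_0$, and then work at a single large $J \ge J_0$.

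Fix such a large $J$ and set $R := e^{(\lambda+\epsilon)J}$. For $x \in S_\lambda$, cut $u_x$ into consecutive windows $\mathbf W_i(x) = (u_x(iJ),\dots,u_x(iJ+J-1))$, $i = 0,1,2,\dots$; these are precisely the regular $J$-blocks, so among the first $K$ of them at most $R$ distinct values occur. The key geometric point is that the initial word $u_x(0),\dots,u_x(KJ-1)$ confines $x$ to an interval of length $\asymp 1/a(KJ)$, and since $a(n)^{1/n}\to p$ this scale is $(p-o(1))^{-KJ}$. Thus $S_\lambda$ is contained in a Cantor-like union of cylinders built from ``$R$-simple'' words (words whose $K$ windows take $\le R$ distinct values), each cylinder of size $\approx p^{-KJ}$; estimating the measure (equivalently the Hausdorff dimension) of this set is exactly the ``measure via dimension'' step.

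To carry out the estimate I would control the conditional distribution of one window given the earlier ones. Conditioning on the exact values of $\mathbf W_0,\dots,\mathbf W_{i-1}$ localizes $x$ to a union of intervals, each of length $\asymp 1/a(iJ)$, over which the fast coordinate $a(iJ+J-1)x$ already winds $\asymp p^{J}$ times. The crucial \emph{spreading} estimate asserts that, on each such interval $\gamma$, every fixed window value is hit with conditional probability $\le Cp^{-J}$; hence any prescribed set $S$ of $\le R$ window values satisfies $\Pr[\mathbf W_i \in S \mid \gamma] \le C R p^{-J} = C e^{(\lambda+\epsilon-\log p)J}$. Telescoping over $i$ and then union-bounding over the at most $M^{J(R+1)}$ choices of the value-set $S$ gives
\[
|S_\lambda| \;\le\; M^{J(R+1)}\,\bigl(C\,e^{(\lambda+\epsilon-\log p)J}\bigr)^{K}.
\]
Choosing $J$ large enough (depending only on $p,\lambda,\epsilon$ and the constant $C$) that $C e^{(\lambda+\epsilon-\log p)J} < 1$, the right-hand side tends to $0$ as $K \to \infty$, proving $|S_\lambda| = 0$. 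Equivalently, the same counting yields $\dim_H S_\lambda \le (\lambda+\epsilon)/\log p < 1$, the Cantor-set dimension bound anticipated above. Letting $\lambda \uparrow \log p$ over a countable set then gives $|E| = 0$, i.e. $\AE(a_x) \ge \log p$ for almost every $x$.

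I expect the main obstacle to be the spreading estimate, a sharp anti-concentration (quantitative equidistribution) statement for the nearly geometric linear sequence $x \mapsto (a(iJ)x,\dots,a(iJ+J-1)x) \bmod 1$: it must produce the true exponent $p^{-J}$ rather than a weaker $\eta^{-J}$ with $\eta<p$, since any loss would only yield $\AE(a_x)\ge \log\eta$. This is where the hypothesis $a(n+1)/a(n)\to p$ enters decisively, forcing the direction vector to be essentially $(1,p,\dots,p^{J-1})$, so that the line winds through $\asymp p^{J}$ boxes without lingering — a simultaneous-Diophantine-approximation phenomenon. A secondary technical nuisance is that the conditioning intervals $\gamma$ of near-minimal length sit near grid boundaries and may be too short for equidistribution; these must be collected into a controlled error term (by slightly coarsening the effective resolution, or by removing a further null set of $x$) so that the spreading bound holds uniformly in $i$ and in the conditioning.
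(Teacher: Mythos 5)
Your overall architecture (show the exceptional set $S_\lambda=\{x:\AE(a_x)\le\lambda\}$ is null by counting cylinders) is a legitimate alternative strategy, but as written the proof has a genuine gap exactly at the step you flag as the ``key geometric point'': the spreading estimate $\Pr[\mathbf W_i=\mathbf w\mid\gamma]\le Cp^{-J}$ is asserted, not proved, and it is not routine. The trivial bound coming from any single coordinate is only $\asymp 1/M$ per window value, since the fastest coordinate spends a fraction $\asymp 1/M$ of its time in any fixed arc; to improve this to $p^{-J}$ you must show the segment $x\mapsto(a(iJ)x,\dots,a(iJ+J-1)x)\bmod 1$ revisits each box only $O(1)$ (or at worst $J^{O(1)}$) times. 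That revisit count is governed by the simultaneous Diophantine behavior of the rational ratios $a(iJ+j)/a(iJ+J-1)$, which the hypothesis $a(n+1)/a(n)\to p$ controls only in the limit and only coordinatewise; moreover the theorem explicitly allows non-integer $p$, and already for the limiting direction $(1,p,\dots,p^{J-1})$ with, say, $p$ a Pisot number there are extra near-returns (Fibonacci-type winding numbers $m$ give $\|mp^{-s}\|$ exponentially small for many $s$ simultaneously), so the ``no lingering'' claim needs a real argument, not just the convergence of ratios. A second, independent error: the claim $\AE(u_x)\le\AE(a_x)$ ``for a generic choice of arc endpoints'' is false as stated, because for almost every $x$ the sequence $a_x$ is equidistributed (Weyl), so its orbit closure is all of $\R/\Z$ and \emph{no} finite arc partition has boundary disjoint from it; discontinuous coding need not decrease anqie entropy. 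The correct tool is Lemma \ref{231210lem1}, which produces a finite-range approximant with entropy $\le\AE(a_x)$ by a subcover construction, not by coding.

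It is worth seeing how the paper's proof of Theorem \ref{240112thm1} sidesteps your obstacle entirely: it never needs an \emph{upper} bound on how often a window value occurs. Instead it subsamples at spacing $N$ with $p^N$ large compared to the resolution $M$ (chosen via \eqref{240113eq1}), so that between consecutive sampled indices the ratio $a(k+N)/a(k)\approx p^N$ exceeds $M$; then inside any interval of the critical length, the next sampled coordinate completes a full winding, and one can greedily nest intervals realizing \emph{any} prescribed block $\mathbf b\in\{0,\tfrac1M,\dots,\tfrac{M-1}{M}\}^J$ as a $(J,N)$-block of the approximant $f_{x,M}$. This is a soft \emph{lower} bound on hitting (each $S_k$ occupies a definite proportion $\theta$ of every admissible interval), which gives geometric decay of $|T_K|$ and hence that a.e.\ $x$ sees all $M^J$ blocks, so $\AE(a_x)\ge\AE(f_{x,M})\ge(\log M)/N\to\log p$. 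The price of the subsampling is only the harmless factor $1/N$, which disappears as $N\to\infty$; your route, which works at spacing $1$ and needs the sharp exponent $p^{-J}$ rather than any lossy $\eta^{-J}$, would require new Diophantine input that the hypothesis does not obviously supply.
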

\begin{proof}
When $p=1$ the statement is trivial.
In the following we assume $p>1 $.
Since $a(n)x = a(n)(x+k) \mod 1 $ for any integer $k$ and $n$, we only need to deal with the case when $x\in [0,1)$.

Given $\epsilon>0$.
Let $N $ be large enough such that $p^N>\frac1{\epsilon}$,
and we denote $M$ to be the largest integer such that
\begin{align}\label{240113eq1}
\lfloor p^N - 1\rfloor \left(\frac{1 }{M} - \frac{1 }{M^2} \right) > 1+\frac 1M.
\end{align}
We may also assume $N $ is large enough so that $M> \frac1{\epsilon}$.
We also let $L$ be large enough such that,
when $n\ge L$,
\begin{align*}
\left|\frac{a(n+N)}{ a(n)} - p^N \right|<\epsilon',
\end{align*}
where $\epsilon'$ is a small parameter to be chosen later.
By Lemma \ref{231210lem1}, for every $x\in [0,1)$ there is a function $f_{x,M}(n)$ taking values in $\{0,\frac{1}{M},\frac{2}{M},..., \frac{M-1}{M}\}$ such that
\begin{align}\label{240112eq9}
\AE(f_{x,M}(n)) \le \AE(a_x(n)) \quad \text{ and } \quad \sup_{n\in \mathbb{N}}|f_{x,M}(n)-a_x(n)|\le  \frac{1 }{2M} + \frac{1}{M^2}.
\end{align}
We will show that for almost every $x\in [0,1)$,
$f_{x,M}(n)$ contains enough different $MJ$-blocks for every $J\ge 1$,
so that $ \AE(f_{x,M} )$ is large.

Let
\[ \mathbf{b} = (\mathbf{b}(0), \mathbf{b}(1),...,\mathbf{b}(J-1)) \in \left\{0,\frac{1}{M},\frac{2}{M},..., \frac{M-1}{M}\right\}^J. \]
We say $\mathbf{b}$ is a \emph{$(J,N)$-block} of $f_{x,M}(n)$ (at position $k\in \N$) if
\[(f_{x,M}(kNJ), f_{x,M}(kNJ+N) , f_{x,M}(kNJ+2N) , ..., f_{x,M}(kNJ+JN)) = \mathbf{b}.\]
We first construct a set $\CSS_\mathbf{b}\subseteq [0,1)$ such that, for every $x$ in $\CSS_\mathbf{b}$, $f_{x,M}(n)$ must (independent of the choice of $f_{x,M}(n)$ for $x$) contain the block $\mathbf{b}$ as its $(J,N)$-block.

Let $k\ge 0$.
If $x$ satisfies that, for every $j=0,1,...,J-1$,
\begin{align}\label{240112eq10}
a_x(kNJ+jN)  \in  \left(\mathbf{b}(j) -\frac{1 }{2M} + \frac{1}{2M^2}, \;\mathbf{b}(j) +\frac{1 }{2M} - \frac{1}{2M^2}  \right),
\end{align}
then from \eqref{240112eq9} we have that $\mathbf{b}$ must be the $(J,N)$-block of $f_{x,M}(n)$ at position $k$.
And \eqref{240112eq10} is equivalent to that, for every $0\le j\le J-1$,
\[a(kNJ+jN)x \in  \left(\mathbf{b}(j) -\frac{1 }{2M} + \frac{1}{2M^2}, \;\mathbf{b}(j) +\frac{1 }{2M} - \frac{1}{2M^2}  \right) + \Z,\]
or equivalently,
\begin{align}
x \in  \left(\frac{m+\mathbf{b}(j) -\frac{1 }{2M} + \frac{1}{2M^2}}{a(kNJ+jN)}, \;\frac{m+\mathbf{b}(j) +\frac{1 }{2M} - \frac{1}{2M^2}}{a(kNJ+jN)}  \right)
\end{align}
for some integer $0\le m\le a(kNJ+jN)-1 $.
Therefore,
if
\begin{align}
x \in S_k:= \bigcap_{j=0}^{J-1} \;\; \bigcup_{m=0}^{a(kNJ+jN)-1 } \left(\frac{m+\mathbf{b}(j) -\frac{1 }{2M} + \frac{1}{2M^2}}{a(kNJ+jN)}, \;\frac{m+\mathbf{b}(j) +\frac{1 }{2M} - \frac{1}{2M^2}}{a(kNJ+jN)}  \right) ,
\end{align}
then $\mathbf{b}$ must be a $(J,N)$-block of $f_{x,M}(n)$.

Define
\begin{align*}
\CSS_\mathbf{b} = \bigcup_{k=L}^\infty S_k.
\end{align*}
Then for every $x$ in $\CSS_\mathbf{b}$, say $x\in S_k$,
then $\mathbf{b}$ is the $(J,N)$-block of $f_{x,M}(n)$ at position $k$.
Next we estimate the measure of $\CSS_\mathbf{b}$ by estimating the measure of its complement
\begin{align*}
\CSS_\mathbf{b}^c = \bigcap_{k=L}^\infty S_k^c.
\end{align*}

Denote the interval
\[A(k,m,i):= \left(\frac{m+\frac {i}M -\frac{1 }{2M} + \frac{1}{2M^2}}{a(kN)}, \;\frac{m+\frac {i}M +\frac{1 }{2M} - \frac{1}{2M^2}}{a(kN)}  \right). \]
Here and in the following, for convenience, when $m=i=0$, we denote
\[\left(\frac{ -\frac{1 }{2M} + \frac{1}{2M^2}}{*}, \;\frac{ \frac{1 }{2M} - \frac{1}{2M^2}}{*}  \right) = \left[0, \;\frac{ \frac{1 }{2M} - \frac{1}{2M^2}}{*}  \right) \bigcup \left( *+ \frac{ -\frac{1 }{2M} + \frac{1}{2M^2}}{*},\; * \right).\]
Let $\epsilon'$ be small enough so that,
when $ k\ge L$,
\begin{align}\label{240112eq11}
\frac{a(kN)}{ a((k-1)N)} \left(\frac{1 }{M} - \frac{1 }{M^2} \right) > 1+\frac 1M.
\end{align}
Now we claim that, when $k \ge L$ and $(y,z)\subseteq [0,1)$ is an open interval with size at least
\begin{align}\label{240112eq12}
 \frac1{a((kJ-1)N)} \left(\frac{1 }{M} - \frac{1 }{M^2} \right),
\end{align}
the intersection $S_k \cap (y,z)$ contains an entire open interval $A(kJ+J-1,m',i')$ for some appropriate $m',i'$.
Indeed,
by \eqref{240112eq11} we have
\[ \left(\frac{ * }{a((kJ)N) } ,\;  \frac{ * + 1+\frac 1M}{a((kJ)N) }\right)\subseteq (y,z), \]
hence there must be some $m_0$ such that
\[A(kJ, m_0, \mathbf{b}(0)) \subseteq   (y,z).\]
Similarly,
by \eqref{240112eq11} we have
\[ \left(\frac{ * }{a((kJ+1)N) } ,\;  \frac{ * + 1+\frac 1M}{a((kJ+1)N) }\right)\subseteq A(kJ, m_0, \mathbf{b}(0)), \]
hence there must be some $m_1$ such that
\[A(kJ+1, m_1, \mathbf{b}(1)) \subseteq   A(kJ, m_0, \mathbf{b}(0)).\]
We continue this process until we find  appropriate $m_0 , m_1 ,..., m_{J-1}$ such that
\[A(kJ+J-1, m_{J-1}, \mathbf{b}(J-1)) \subseteq \cdots \subseteq A(kJ, m_0, \mathbf{b}(0)) \subseteq   (y,z).\]
It is easy to check that $A(kJ+J-1, m_{J-1}', \mathbf{b}(J-1)) \subseteq S_k$,
hence
\begin{align}\label{240112eq13}
A(kJ+J-1, m_{J-1}', \mathbf{b}(J-1)) \subseteq S_k \cap (y,z)
\end{align}
as we claimed.

To estimate the size of $\CSS_\mathbf{b}^c$,
we denote
\[T_K = \bigcap_{k=L}^K S_k^c \]
and show their measures $|T_K|$ converges to 0 as $K\rightarrow \infty$.
Clearly, $T_K$ is a disjoint union of finitely many intervals.
We can divide these intervals into smaller intervals, say $(y_1, z_1)$, ..., $(y_q,z_q)$ and $(y_1', z_1')$, ..., $(y_r',z_r')$,
such that the total measure of $(y_1', z_1') \cup \cdots \cup (y_r',z_r')$ is smaller than $|T_K|/2$,
and the intervals $(y_1, z_1)$, ..., $(y_q,z_q)$ have a same length
\begin{align}\label{240112eq14}
= \frac1{a((K_1J-1)N)} \left(\frac{1 }{M} - \frac{1 }{M^2} \right)
\end{align}
for some large integer $K_1$.
Since this length satisfies the condition \eqref{240112eq12},
from \eqref{240112eq13} we have that for $i=1,...,q$,
\begin{align*}
|S_{K_1} \cap (y_i,z_i)| \ge & \frac{1}{ a((K_1J+J-1)N)} \left(\frac{1 }{M} - \frac{1 }{M^2} \right)\\
= & \frac{a((K_1J-1)N)}{ a((K_1J+J-1)N)} |  (y_i,z_i)| \\
\ge & |  (y_i,z_i)| \left( \frac{M(M+1)}{M-1}\right)^J,
\end{align*}
where the last equality comes from \eqref{240112eq11}.
Denote $\theta = \left( \frac{M(M+1)}{M-1}\right)^J$.
Then
$$|S_{K_1} \cap T_K| \ge \frac \theta 2 |T_K|,$$
and hence
\[ |T_{K_1} | \le |S_{K_1}^c \cap T_K| \le \left(1-   \theta/2 \right) |T_K|.\]
Since $\theta>0$,
we have that
\[\lim_{K\rightarrow \infty} |T_K| =0.\]
Hence $\CSS_\mathbf{b}^c$ is a zero measure set.
Therefore the set
\[\CSS : = \bigcap_{\mathbf{b}}\CSS_\mathbf{b} \]
contains almost every $x\in \R/\Z$,
where the intersection runs over all possible blocks $\mathbf{b}$ (of all lengths) in $\left\{0,\frac{1}{M},\frac{2}{M},..., \frac{M-1}{M}\right\}$.
By the definition of $\CSS_\mathbf{b}$ we have that,
for every $x\in \CSS$,
$f_{x,M}(n)$ contains every $J$-block as its $(J,N)$-block.
Therefore $f_{x,M}(n)$ has at least $M^J$ different regular $(JN)$-blocks.
By Lemma \ref{lem:Jblock},
\begin{align*}
\AE(f_{x,M})  =  \lim\limits_{J\rightarrow\infty}\frac{\log |B_{NJ}^r(f_{x,M})|}{NJ}
\ge   \lim\limits_{J\rightarrow\infty}\frac{J \log M}{NJ}
=  \frac{\log M}{N}.
\end{align*}
By the condition \eqref{240112eq9},
\begin{align*}
\AE(a_x(n)) \ge \sup_M \AE(f_{x,M}) \ge \sup_M \frac{\log M}{N}= \log p,
\end{align*}
where the last equation comes from the definition \eqref{240113eq1}.
\end{proof}
The above proof (with some modifications) also gives a slightly stronger statement:
if $\liminf_n a(n+1)/a(n)= p$, then for almost all real numbers $x$, the sequence $a_x(n)$ has entropy at least $\log p$.
On the other hand,
even with the condition $\lim_n a(n+1)/a(n)= p$,
there still exist sequences $a(n)$ such that,
the entropy of $a_x(n)$ is strictly greater than $\log p$ for every irrational $x$.
In the following, we give an example.
\begin{exm}\label{240116exm1}
In this example,
for every $L>0$ and integer $p>2$,
we will construct a increasing sequence $a(n)$ with $\lim_n a(n+1)/a(n)= p$,
such that $\AE(a_x(n))>\log p+L$ for every irrational $x$.

Let $p'> p^2$ with $\log p'- 2\log p>L$,
and $d(n)$ be a sequence taking values from $\{0,1,...,p'-1\}$ with $\AE(d(n)) = \log p'$.
Let $c(n) $ be the sequence such that $c(0)=0$ and $\Delta c(n) = d(n)$.
Then $c(n)\le np'$.
Denote $a'(n) = p^n$, and define
\[a(n) = a'(n) + c(n).\]
So $a_x(n) = a'_x(n) + c(n)x$ for any $x$,
and $$\lim_n a(n+1)/a(n) = p.$$
Moreover,
for every irrational $x$,
$\AE(a'_x) \le \log p$ (see Proposition \ref{240115prop1}),
and by Theorem \ref{231211thm1}, $\AE(c_x(n)) = \AE(xd(n)) = \log p'$.
Hence by Lemma \ref{231210cor1} (iii),
\begin{align}\label{240117eq1}
\AE(a_x(n)) \ge \AE(c_x(n)) - \AE(a'_x) \ge \log p'-\log p> L.
\end{align}
\end{exm}

\section{Properties of dual entropy}\label{section_twoentropy}

Recall that a sequence $a(n)$ of integers is called \emph{centered} if $a_x(n)$ have the same entropy for almost all $x$,
and the common entropy is called the \emph{dual entropy} and denoted as $\AE^*(\{a(n)\}_{n\in\N}) $.
By the properties of (anqie) entropy,
it is not hard to check the following properties of dual entropy.
\begin{prop}
Let $a(n)$ and $b(n)$ be two centered sequence of integers,
then we have the following:
\begin{enumerate}
\item $\AE^*(a(n)) \ge 0$;
\item
    $\AE^*(a(n+1))= \AE^*(a(n))$, and $\AE^*(\overline{a(n)})= \AE^*(a(n))$;
\item
${\AE^*}(a(n))\ge {\AE^*}(ca(n))$ for any nonzero integer $c$;
and \item if $a(n)\pm b(n)$ is centered, then
${\AE^*}(a(n)\pm b(n))\leq {\AE^*}(a(n))+{\AE^*}(b(n)) $;
\item if $\AE^*(b(n))=0$,
then $a(n) \pm b(n)$ is centered and $\AE^*(a(n) \pm b(n)) = \AE^*(a(n))$.
\end{enumerate}
\end{prop}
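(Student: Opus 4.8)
The plan is to read off each item from the definition $\AE^*(a)=\AE(a_x(n))$---the almost-everywhere constant value of the anqie entropy of $a_x(n)=a(n)x\bmod 1$---together with two elementary identities. The first is the reparametrization identity $(ca)_x(n)=a_{cx}(n)$, which follows from $ca(n)x=a(n)(cx)$ and converts scaling of the integer sequence into a change of the real variable $x$. The second is $(a\pm b)_x(n)\equiv a_x(n)\pm b_x(n)\pmod 1$, which expresses the sequence attached to $a\pm b$ through the torus addition of $a_x(n)$ and $b_x(n)$. To control entropy under this torus addition I will identify $\R/\Z$ with the unit circle $S^1\subset\CN$ via $t\mapsto e^{2\pi i t}$: this is a homeomorphism, so the $J$-block counts of Lemma~\ref{lem:Jblock} are unchanged and the anqie entropy is preserved, while mod-$1$ addition becomes complex multiplication and negation becomes complex conjugation. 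Then Lemma~\ref{231210cor1}(iv),(i) yield the subadditivity $\AE(a_x(n)\pm b_x(n))\le \AE(a_x(n))+\AE(b_x(n))$. With these tools, (i) is immediate from Lemma~\ref{some properties about anqie entropy}(i) applied to $a_x(n)$ for a.e.\ $x$; in (ii), writing $b(n)=a(n+1)$ gives $b_x(n)=a_x(n+1)$ so the shift statement follows from Lemma~\ref{231210cor1}(i), and $\overline{a(n)}=a(n)$ for an integer sequence makes the conjugation statement trivial.

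For (iii) I use the reparametrization identity. If $a$ is centered with $\AE(a_y(n))=\lambda$ for all $y$ in a full-measure set $F$, then $\AE((ca)_x(n))=\AE(a_{cx}(n))$ equals $\lambda$ precisely when $cx\in F$. Since $c\neq 0$ is an integer, $x\mapsto cx$ sends null sets to null sets in both directions, so $\{x:cx\in F\}$ is again of full measure. Hence $ca$ is centered with $\AE^*(ca(n))=\lambda=\AE^*(a(n))$, which in particular gives the asserted $\AE^*(a(n))\ge\AE^*(ca(n))$. (Alternatively one gets $\le$ directly: $(ca)_x(n)=c\,a_x(n)\bmod 1$ corresponds on $S^1$ to $z\mapsto z^{c}$, a polynomial in $a_x$ and its conjugate, so Lemma~\ref{some properties about anqie entropy}(iii) applies.)

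For (iv), fix the full-measure set on which simultaneously $\AE(a_x(n))=\AE^*(a(n))$, $\AE(b_x(n))=\AE^*(b(n))$, and $\AE((a\pm b)_x(n))=\AE^*(a(n)\pm b(n))$ (a finite intersection of full-measure sets, using the hypothesis that $a\pm b$ is centered). On this set the circle subadditivity gives $\AE^*(a(n)\pm b(n))=\AE((a\pm b)_x(n))=\AE(a_x(n)\pm b_x(n))\le\AE(a_x(n))+\AE(b_x(n))=\AE^*(a(n))+\AE^*(b(n))$. For (v), assume $\AE^*(b(n))=0$ and work on the full-measure set where $\AE(a_x(n))=\AE^*(a(n))=:\lambda$ and $\AE(b_x(n))=0$. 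Subadditivity gives the upper bound $\AE((a\pm b)_x(n))\le\lambda+0=\lambda$. For the matching lower bound I write $a=(a\pm b)\mp b$, so that $a_x(n)=(a\pm b)_x(n)\mp b_x(n)\bmod 1$ and subadditivity again gives $\lambda=\AE(a_x(n))\le\AE((a\pm b)_x(n))+\AE(b_x(n))=\AE((a\pm b)_x(n))$. Thus $\AE((a\pm b)_x(n))=\lambda$ on a full-measure set, which simultaneously shows that $a\pm b$ is centered and that $\AE^*(a(n)\pm b(n))=\lambda=\AE^*(a(n))$.

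The algebraic identities and the intersection of finitely many full-measure sets are routine. The point requiring genuine care is the passage between the \emph{additive} structure of the torus $\R/\Z$, in which the relevant difference sequences naturally live, and the \emph{multiplicative} structure of $S^1\subset\CN$, in which the subadditivity of Lemma~\ref{231210cor1} is phrased; one must verify that the identification $t\mapsto e^{2\pi i t}$ preserves anqie entropy so that these bounds transfer faithfully. The only genuinely two-sided argument is (v): establishing that $a\pm b$ is \emph{centered}, rather than merely bounding its entropy, depends on running subadditivity in both directions via $a=(a\pm b)\mp b$ together with $\AE(b_x(n))=0$ almost everywhere.
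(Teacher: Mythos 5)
Your proof is correct, and it is essentially the argument the paper intends: the paper states this proposition without proof, remarking only that it follows ``by the properties of (anqie) entropy,'' and your write-up supplies exactly those checks, namely the pointwise-in-$x$ identities $(ca)_x(n)=a_{cx}(n)$ and $(a\pm b)_x(n)=a_x(n)\pm b_x(n)\bmod 1$, applications of Lemma~\ref{some properties about anqie entropy} and Lemma~\ref{231210cor1} for each fixed $x$, and intersections of finitely many full-measure sets. Your attention to the one nontrivial point --- that the subadditivity of Lemma~\ref{231210cor1} is stated for complex-valued sequences, while the sums here are taken mod $1$ --- is warranted, and the embedding $t\mapsto e^{2\pi i t}$, which turns torus addition into multiplication on $S^1$ and negation into conjugation, is the right (and, in the underlying framework, essentially definitional) way to transfer those bounds. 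One further observation: your reparametrization argument in (iii) proves more than the paper claims, since the map $x\mapsto cx$ preserves null sets in both directions, so $ca(n)$ is automatically centered with $\AE^*(ca(n))=\AE^*(a(n))$; the stated inequality is thus an equality, and your alternative argument via $z\mapsto z^{c}$ and Lemma~\ref{some properties about anqie entropy}(iii) is only needed if one wants a proof that does not pass through the change of variable.
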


Moreover,
Theorem \ref{231211thm1},
Lemma \ref{240117lem1} and Theorem \ref{240112thm1} (with their proofs) give the following three propositions, respectively.

\begin{prop}
A sequence $a(n)$  of integers is centered if and only if its difference $\Delta a(n)$ is centered,
and in this case, they have the same dual entropy.
\end{prop}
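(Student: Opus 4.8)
The plan is to reduce everything to the pointwise (in $x$) version of the Entropic van der Corput's Theorem. The crucial observation is that taking the difference of the integer sequence $a(n)$ and taking the difference of the torus-valued sequence $a_x(n)$ commute: for every real number $x$ and every $n$,
$$
a_x(n+1)-a_x(n)=\bigl(a(n+1)-a(n)\bigr)x=(\Delta a)(n)\,x=(\Delta a)_x(n)\pmod 1,
$$
since subtraction is well defined on $\R/\Z$. Thus the difference sequence of the torus-valued sequence $\{a_x(n)\}_{n\in\N}$ is exactly the sequence $\{(\Delta a)_x(n)\}_{n\in\N}$.

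First I would apply Theorem \ref{231211thm1} with $d=1$ to the $\R/\Z$-valued sequence $\{a_x(n)\}_{n\in\N}$, for a fixed but arbitrary real $x$. This immediately yields
$$
\AE\bigl((\Delta a)_x\bigr)=\AE\bigl(\{a_x(n+1)-a_x(n)\}\bigr)=\AE(a_x).
$$
The key point is that this identity holds for \emph{every} real number $x$, not merely almost every $x$; the Entropic van der Corput's Theorem is an exact, deterministic statement about a single torus-valued sequence, so no measure-theoretic hypothesis is needed at this stage.

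With this pointwise identity in hand, the proposition follows by straightforward bookkeeping of null sets. For the forward direction, suppose $a(n)$ is centered with dual entropy $\lambda$, so that $\AE(a_x)=\lambda$ for all $x$ outside a null set $E$. Then for every $x\notin E$ we have $\AE((\Delta a)_x)=\AE(a_x)=\lambda$, so $\Delta a(n)$ is centered with $\AE^*(\Delta a)=\lambda=\AE^*(a)$. The converse is identical: if $\Delta a(n)$ is centered with dual entropy $\mu$, then outside a null set $\AE(a_x)=\AE((\Delta a)_x)=\mu$, whence $a(n)$ is centered with the same dual entropy.

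There is essentially no genuine obstacle here once Theorem \ref{231211thm1} is available, and this proposition is best viewed as a direct corollary of it. The only subtlety worth flagging is the one already emphasized: because the entropy identity is valid for each individual $x$, the exceptional null sets on the two sides coincide, so both centeredness and the numerical value of the dual entropy transfer in both directions with no loss. Had one only known a comparison between the two difference operations holding merely almost everywhere, the argument would not close so cleanly, so it is precisely the pointwise strength of the Entropic van der Corput's Theorem that makes the equivalence immediate.
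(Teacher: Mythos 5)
Your proof is correct and takes essentially the same route as the paper: the paper presents this proposition as a direct consequence of Theorem \ref{231211thm1}, relying on exactly the pointwise identity $\AE\bigl((\Delta a)_x\bigr)=\AE(a_x)$ for every real $x$ (via the commutation $a_x(n+1)-a_x(n)=(\Delta a)_x(n)$ in $\R/\Z$) that you establish, followed by the same transfer of the almost-everywhere constancy of the entropy in both directions.
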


\begin{prop}
Let $\{a(n)\}_{n\in\N}$ be a sequence of integers such that $\Delta^k a(n)$ is bounded uniformly for some $k\ge 1$.
Then $a(n)$ is centered, and
\[\AE^*(a(n)) = \AE( \Delta^k a(n)).\]
\end{prop}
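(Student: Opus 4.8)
The plan is to deduce the proposition directly from the higher-order form of Lemma \ref{240117lem1} recorded in the remark following its proof, combined with the triviality that $\Q$ is a Lebesgue-null set. Concretely, it suffices to show that for \emph{every} irrational $x$ one has $\AE(a_x(n)) = \AE(\Delta^k a(n))$; since the rationals carry measure zero, this equality then holds for almost all $x$, which is exactly the assertion that $a(n)$ is centered with $\AE^*(a(n)) = \AE(\Delta^k a(n))$.

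To establish the pointwise statement, first I would set $d(n) := \Delta^k a(n)$ and $\lambda := \AE(d(n))$. By hypothesis $d(n)$ is a uniformly bounded integer sequence, hence a finite-range function, say with values in $\{-L,\dots,L\}$. The key algebraic observation is that differencing commutes with multiplication by $x$ modulo $1$: since $\Delta^k$ is a fixed $\Z$-linear combination of shifts, $\Delta^k a_x(n) = (\Delta^k a(n))x = d(n)x \bmod 1$. Thus $\Delta^k a_x(n)$ is the ``$x$-rotation'' of the finite-range sequence $d(n)$, exactly as in the proof of Lemma \ref{240117lem1}. Because $x$ is irrational, the numbers $jx \bmod 1$ for $j \in \{-L,\dots,L\}$ are pairwise distinct, so $j \mapsto (jx \bmod 1)$ is a bijection on the range of $d(n)$; extending it blockwise gives a bijection between the $J$-blocks of $d(n)$ and those of $\Delta^k a_x(n)$, whence $\AE(\Delta^k a_x(n)) = \lambda$.

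Next I would transfer this entropy from the $k$-th difference back to $a_x$ itself by iterating the Entropic van der Corput Theorem. Applying Theorem \ref{231211thm1} with $d=1$ to the $\R/\Z$-valued sequences $a_x, \Delta a_x, \dots, \Delta^{k-1} a_x$ in turn yields
\[
\AE(a_x(n)) = \AE(\Delta a_x(n)) = \cdots = \AE(\Delta^k a_x(n)) = \lambda = \AE(\Delta^k a(n)),
\]
valid for every irrational $x$. Passing to full measure as above completes the proof.

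I do not expect a genuine obstacle here, since all the substance is already contained in the higher-order version of Lemma \ref{240117lem1}; the proposition is essentially a reformulation in the language of dual entropy. The only points demanding a little care are the identity $\Delta^k a_x(n) = (\Delta^k a(n))x \bmod 1$ (which relies on mod-$1$ reduction commuting with $\Z$-linear combinations of shifts) and checking that the blockwise bijection argument is unaffected by the $k$-fold, rather than single, differencing.
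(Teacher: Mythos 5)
Your proposal is correct and follows essentially the same route as the paper: the paper derives this proposition from the higher-order version of Lemma \ref{240117lem1} (stated in the remark after its proof), which is exactly your argument of rotating the finite-range sequence $\Delta^k a(n)$ by irrational $x$ to get a block bijection, then transferring entropy back to $a_x$ via Theorem \ref{231211thm1} and discarding the null set of rationals. Your explicit iteration of Theorem \ref{231211thm1} through $a_x, \Delta a_x, \dots, \Delta^{k-1} a_x$ just spells out what the paper leaves implicit in the phrase ``the argument is derived by the same proof.''
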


\begin{prop}
If $a(n)$ is a centered sequence such that  $\lim_n a(n+1)/a(n)= p$, then  $\AE^*(a(n)) \ge \log p$.
\end{prop}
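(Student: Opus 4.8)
The plan is to derive this proposition directly from Theorem \ref{240112thm1} together with the definition of dual entropy. First I would dispose of the trivial range: if $p \le 1$ then $\log p \le 0$, and since anqie entropy is nonnegative (Lemma \ref{some properties about anqie entropy}\,(i)) every $a_x(n)$ satisfies $\AE(a_x) \ge 0 \ge \log p$, so $\AE^*(a(n)) \ge \log p$ automatically. Hence I may assume $p > 1$ from now on.

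The key observation is that the hypothesis $\lim_n a(n+1)/a(n) = p > 1$ already supplies, in the tail, everything that the proof of Theorem \ref{240112thm1} actually uses about the sequence. Indeed, $p > 1$ forces $a(n) \to \infty$ and $a(n+1) > a(n)$ for all $n$ beyond some index $L_0$, so $a(n)$ is eventually strictly increasing with distinct values; moreover $\lim_n a(n+N)/a(n) = p^N$ for each fixed $N$. These are exactly the features invoked in the argument for Theorem \ref{240112thm1}: the construction there begins at an index $L$ that may be chosen as large as we like (the sets $\CSS_\mathbf{b} = \bigcup_{k=L}^\infty S_k$), the nested interval construction only needs $a((kJ+j)N)$ to be increasing in $j$ over large indices, and inequality \eqref{240112eq11} is precisely the statement that $a(kN)/a((k-1)N)$ is close to $p^N$ for $k \ge L$. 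Therefore I would rerun that proof verbatim, taking $L \ge L_0$, to conclude that the set $G_1 := \{x \in \R/\Z : \AE(a_x(n)) \ge \log p\}$ has full measure. The finitely many initial terms $a(0), \ldots, a(L_0)$ play no role, since the entropy is a tail quantity.

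Next I invoke the centeredness hypothesis: by Definition \ref{240117defi1} there is a full-measure set $G_2$ on which $\AE(a_x(n))$ takes the constant value $\AE^*(a(n))$. Then $G_1 \cap G_2$ is again of full measure, and in particular nonempty; choosing any $x$ in it gives $\AE^*(a(n)) = \AE(a_x(n)) \ge \log p$, which is the desired inequality.

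The only point that needs genuine care — the main obstacle — is the first step, namely confirming that the proof of Theorem \ref{240112thm1} does not really rely on the global ``increasing sequence of distinct integers'' hypothesis but only on its tail consequences, which the ratio condition restores once $p > 1$. Everything else is bookkeeping: intersecting two full-measure sets and using nonnegativity of entropy to absorb the case $p \le 1$.
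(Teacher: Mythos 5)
Your proposal is correct and follows essentially the same route as the paper, which presents this proposition as a direct consequence of Theorem \ref{240112thm1} ``with its proof'' plus Definition \ref{240117defi1}: rerun the tail-based construction of that theorem (which, as you note, only needs the sequence to be eventually increasing with ratios near $p^N$, all of which follow from $\lim_n a(n+1)/a(n)=p>1$) to get a full-measure set where $\AE(a_x)\ge\log p$, intersect it with the full-measure set where $\AE(a_x)=\AE^*(a(n))$, and absorb $p\le 1$ by nonnegativity of entropy. The one small case you omit is that for $p>1$ the terms could be eventually negative rather than eventually positive (e.g.\ $a(n)=-2^n$), but this is handled by replacing $a(n)$ with $-a(n)$, which alters $a_x(n)$ only by the sign-flip homeomorphism of $\R/\Z$ and hence does not change its entropy.
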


Since dual entropy is another kind of entropy defined for subsets of $\N$,
it is natural to compare it with anqie entropy.
For a given subset $A\subseteq \N$,
let $a(0)<a(1)<\cdots$ be all the elements in $A$,
then we can describe its complexity by two different notions of entropy:
one is the dual entropy of $a(n)$,
if it exists;
the other one is the entropy of the characteristic function $1_A(n)$.
In the following, we will also denote $\AE^*(A)$ by the dual entropy of $a(n)$ if there is no confusion.
Then it is natural to ask whether these two entropy, $\AE^*(A)$ and $\AE(1_A)$, coincide.
We will use a specific example to negatively answer this question.

Let $\CQQ$ be the set of square-free numbers,
meaning that a natural number $m\in \CQQ$ if and only if $m$ does not have square factors other than 1.
It is an important research object in number theory since $\CQQ$ is the support of the M\"{o}bius function $\mu(n)$.
Sarnak \cite{sarnak2011lectures} proved that
$$\AE(1_\CQQ)
=\AE(\mu^2(n))
= \frac{6}{\pi^2}\log2 .$$  
On the other hand, in Proposition \ref{240116prop1}, we will prove the dual entropy of $\CQQ$ is infinite.
So the entropy of $1_\CQQ$ and the dual entropy of $\CQQ$ are different.

To estimate the dual entropy $\CQQ$,
we need to describe the blocks appearing in $1_\CQQ$.
It was proved in \cite{sarnak2011lectures} that
a $\{0,1\}$-block appears in the sequence $1_\CQQ$ if and only if it is so-called admissible.
Here, we provide the definition.
\begin{defi}[Admissible block]
Call a subset $A$ of $\mathbb{N}$ \textbf{admissible} if its reduction $\bmod$ $p^2$ doesn't cover all the residue classes $\bmod  p^2$ for every prime $p$.
A finite block in $\{0,1\}$ is called an \textbf{admissible block} if its support is an admissible set.
\end{defi}
For example,
the block
\[(1,0,1,1,0,1)\]
is not admissible since its support is $\{1,3,4,6\}$, whose reduction
mod 4 is $\{1,2,3,4\}$ and covers all the residue classes mod 4.
And the block
\[(1,0,1,0,0,1)\]
is admissible.

Sarnak showed that the number of $J$-blocks of $1_\CQQ$ is $2^{\; 6J/\pi^2 + o(J)}$.
In some sense, what we need to do is to estimate the number of ``blocks" appearing in $\CQQ$.
For this purpose,
we first prove the following Lemma,
which roughly says that there are many ``blocks" in $\CQQ$.

\begin{lem}\label{240112lem1}
For any irrational number $x$ and open intervals $(y_1,z_1)$,..., $(y_J,z_J)$ in $\R/\Z$ ($J\ge 0$),
there is an admissible block of the form
\begin{align}\label{240112eq1}
(1,\underbrace{0,...,0}_{d_1-1},1,\underbrace{0,...,0}_{d_2-1},1,...1, \underbrace{0,...,0}_{d_J-1},1)
\end{align}
such that
\begin{align}\label{240112eq2}
d_jx\in (y_j,z_j),\quad j=1,2,...,J.
\end{align}
\end{lem}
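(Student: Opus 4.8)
The lemma asks me to find, for any irrational $x$ and any collection of target intervals $(y_1,z_1),\dots,(y_J,z_J)$, an admissible block of the specific shape in \eqref{240112eq1} — a block that starts and ends with $1$, with $J-1$ internal gaps of zeros — whose consecutive gap-lengths $d_1,\dots,d_J$ satisfy $d_j x \in (y_j,z_j) \pmod 1$. The block in \eqref{240112eq1} has support $\{0, d_1, d_1+d_2, \dots, d_1+\cdots+d_J\}$ (up to translation), so I am really looking for gap sizes $d_j$ that simultaneously hit the prescribed intervals under multiplication by $x$ and that assemble into an admissible set.

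**The plan.** The key observation is that the two requirements — hitting the intervals, and being admissible — can be decoupled by exploiting a great deal of freedom in choosing the $d_j$. The plan is to first use the equidistribution of $\{nx\}$ (Weyl's theorem, since $x$ is irrational) to guarantee that each interval $(y_j,z_j)$ contains infinitely many points of the form $\{d x\}$; thus for each $j$ there is an infinite set $D_j \subseteq \N$ of admissible candidate gap-lengths with $d x \in (y_j,z_j) \pmod 1$. The freedom lies in being able to choose $d_j \in D_j$ as large as I please. I would then build the block greedily, choosing the $d_j$ one at a time, and use the extra room to force admissibility. Concretely, admissibility fails only if, for some prime $p$, the support's reduction mod $p^2$ covers every residue class mod $p^2$; since the support has exactly $J+1$ elements, this can only happen for primes $p$ with $p^2 \le J+1$, i.e., for finitely many small primes. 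So I only need to control the residues of the partial sums $s_k := d_1 + \cdots + d_k$ modulo $p^2$ for this finite set of primes.

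**Ensuring admissibility.** For each small prime $p$ with $p^2 \le J+1$, I want the support $\{0, s_1, s_2, \dots, s_J\}$ to miss at least one residue class mod $p^2$. The cleanest route is to fix, for each such prime $p$, a forbidden residue $r_p$, and arrange that every partial sum $s_k \not\equiv r_p \pmod{p^2}$; by CRT it suffices to control the joint residue of each $s_k$ modulo $P := \prod p^2$ over the finite set of relevant primes. Here the selection from $D_j$ comes in: since $D_j$ is infinite, by pigeonhole it contains elements in every residue class mod $P$ that occurs infinitely often — and in fact I can show each $D_j$ meets a positive-density set of residues mod $P$, because equidistribution of $\{dx\}$ is insensitive to congruence restrictions on $d$ (the sequence $\{(kP+a)x\}_k$ is still equidistributed for any fixed $a$). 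So at the $k$-th step, having chosen $s_{k-1}$, I can pick $d_k \in D_k$ with $s_k = s_{k-1} + d_k$ landing in a safe residue mod $P$, simultaneously satisfying the interval condition \eqref{240112eq2}. Iterating gives gap-lengths $d_1,\dots,d_J$ whose partial sums avoid the chosen forbidden residues, so the support is admissible.

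**The main obstacle.** The delicate point is the interaction between the two residue constraints on $d_k$: the interval condition $d_k x \in (y_k,z_k) \pmod 1$ lives on the circle $\R/\Z$ and is governed by $x$, while the admissibility condition $s_{k-1}+d_k$ in a safe class mod $P$ is an arithmetic congruence on $d_k$ itself. I must verify these are compatible — that the set of $d$ satisfying the circle condition is not concentrated on the bad residues mod $P$. The right tool is a quantitative equidistribution statement: for $x$ irrational and any fixed $a, P$, the sequence $\{(Pk+a)x\}_{k}$ is equidistributed mod $1$ (again by Weyl, since $Px$ is irrational). This guarantees that within each fixed residue class mod $P$ there are infinitely many $d$ with $dx \in (y_k,z_k)$, so I can always meet both constraints at once. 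Assembling these infinitely-many choices per class across the finitely many primes is then a routine application of the pigeonhole principle and the Chinese Remainder Theorem, and the resulting block has exactly the form \eqref{240112eq1} with the desired properties.
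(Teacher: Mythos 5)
Your proposal is correct and follows essentially the same route as the paper's proof: both reduce admissibility to the finitely many primes with $p^2 \le J+1$, control the residues of the partial sums $d_1+\cdots+d_k$ modulo the product of the small prime squares through congruence conditions on each $d_k$, and use the fact that for irrational $x$ the points $dx \bmod 1$ with $d$ ranging over any infinite arithmetic progression are dense (indeed equidistributed) in $\R/\Z$, so the interval condition and the congruence can be met simultaneously. The only cosmetic difference is that the paper argues by induction on $J$ and forces each new partial sum to be $\equiv 0 \pmod{p^2}$ for the relevant primes (so the support's residue set never grows, and admissibility is inherited), whereas you fix forbidden residues $r_p$ (which should be chosen nonzero, since $0$ always lies in the support) and avoid them greedily.
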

\begin{proof}
We construct the admissible block by induction on $J$.
When $J=0$, there is nothing to prove.
Assume for $J\ge 0$ we have constructed an admissible block of the form \eqref{240112eq1}
such that condition \eqref{240112eq1} is satisfied.
By the definition of admissible block, the set
\[A_{J}= \left\{0,d_1,d_1+d_2, d_1+d_2+d_3,..., \sum_{j=1}^{J}d_{j}\right\}\]
is admissible.
Our admissible block for the case $J+1$ will be of the form
\begin{align}\label{240112eq5}
(1,\underbrace{0,...,0}_{d_1-1},1,\underbrace{0,...,0}_{d_2-1},1,...1, \underbrace{0,...,0}_{d_J-1},1,\underbrace{0,...,0}_{d_{J+1}-1},1 )
\end{align}
for some  appropriate $d_{J+1} $.
To make \eqref{240112eq5} an admissible block,
by definition, we only need to make the set
\[A_{J+1}= \left\{0,d_1,d_1+d_2, d_1+d_2+d_3,..., \sum_{j=1}^{J+1}d_{j}\right\}\]
be admissible,
meaning that its reduction $\bmod$ $p^2$ doesn't cover all the residue classes $\bmod$ $  p^2$ for every prime $p$.
When $ p^2>J+1$, clearly $A_{J+1}$ $\bmod$ $p^2$ cannot cover all the residue classes $\bmod$  $p^2$.
For $ p^2 \le J+1$,
we choose $d_{J+1}$ so that
\begin{align}\label{240112eq6}
d_{J+1} + \sum_{j=1}^J d_j \equiv 0 \mod p,\quad \forall p \le \sqrt{J+1}.
\end{align}
For every $d_{J+1} $ satisfying \eqref{240112eq6}, since $0\in A_J$, we have
\[A_{J+1} \bmod p^2 = A_J \bmod p^2,\quad \forall p \le \sqrt{J+1}, \]
and since $A_J$ is admissible, we have that $A_{J+1}$ is also admissible.

Finally we choose  appropriate $d_{J+1} $ satisfying \eqref{240112eq6} such that $d_{J+1}x\in (y_{J+1},z_{J+1})$.
The numbers $d_{J+1} $ satisfying \eqref{240112eq6} form an arithmetic progression of infinite length,
so the values $d_{J+1} x $ (recall that $x$ is irrational) are dense in $\R/\Z$.
So there exists $d_{J+1} $ satisfying \eqref{240112eq6} such that $d_{J+1}x\in (y_{J+1},z_{J+1})$,
 which gives a block needed.
\end{proof}

\begin{thm}\label{240116prop1}
Consider the set of square-free numbers $\CQQ$.
Denote $a(0)<a(1)<\cdots$ to be all the elements in $\CQQ$, such that for every $a\in \CQQ$, $\mu^2(a)=1$.
The dual entropy of $\{a(n)\}_{n\in\N}$ is infinite.
\end{thm}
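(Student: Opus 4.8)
The plan is to reduce to the first difference and then feed the number theory of Lemma~\ref{240112lem1} and Sarnak's admissibility criterion into a block count. Write $g(n)=a(n+1)-a(n)$ for the gap sequence of the square-free numbers. By Theorem~\ref{231211thm1} the sequence $a_x(n)$ and its difference $\Delta a_x(n)=g(n)x\bmod 1$ have the same entropy, so it suffices to prove $\AE(\Delta a_x)=+\infty$ for every irrational $x$; since the irrationals have full measure, this shows $\CQQ$ is centered with $\AE^*(\CQQ)=+\infty$. Working with $\Delta a_x$ rather than $a_x$ is the key move, because the values of $\Delta a_x$ are exactly the numbers $g(n)x\bmod 1$, and I never have to track the absolute values $a(k)x$.

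Fix an irrational $x$. The central observation is that a run of consecutive gaps $(g(k),g(k+1),\dots,g(k+J-1))=(d_1,\dots,d_J)$ occurs in $\CQQ$ precisely when the block $(1,\underbrace{0,\dots,0}_{d_1-1},1,\dots,1,\underbrace{0,\dots,0}_{d_J-1},1)$ appears in $1_\CQQ$, and by Sarnak's criterion (a $\{0,1\}$-block occurs in $1_\CQQ$ if and only if it is admissible) every admissible block of this form does appear. Hence, for any gaps $d_1,\dots,d_J$ produced by Lemma~\ref{240112lem1} there is a position $k$ with
\[(\Delta a_x(k),\Delta a_x(k+1),\dots,\Delta a_x(k+J-1))=(d_1x,d_2x,\dots,d_Jx)\bmod 1.\]
Thus Lemma~\ref{240112lem1} lets me place the consecutive difference values into any prescribed cells, and crucially it does so for \emph{every} irrational $x$, so no measure estimate like the one in Theorem~\ref{240112thm1} is needed here.

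Now I would run the finite-range counting exactly as in Theorem~\ref{240112thm1}. Fix an integer $M\ge 2$ and, by Lemma~\ref{231210lem1}, choose $f_{x,M}(n)$ valued in $\{0,\frac1M,\dots,\frac{M-1}{M}\}$ with $\AE(f_{x,M})\le\AE(\Delta a_x)$ and $\sup_n|f_{x,M}(n)-\Delta a_x(n)|\le\frac{1}{2M}+\frac{1}{M^2}$. Given any target block $\mathbf b=(\mathbf b(0),\dots,\mathbf b(J-1))\in\{0,\frac1M,\dots,\frac{M-1}{M}\}^J$, apply Lemma~\ref{240112lem1} with the $j$-th target interval $(\mathbf b(j)-\frac{1}{2M}+\frac{1}{2M^2},\,\mathbf b(j)+\frac{1}{2M}-\frac{1}{2M^2})$ to obtain gaps and a position $k$ as above; exactly as in the estimate following \eqref{240112eq10}, driving each difference value strictly inside its cell forces $f_{x,M}(k+j)=\mathbf b(j)$, so $\mathbf b$ occurs as a $J$-block of $f_{x,M}$. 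As $\mathbf b$ ranges over all $M^J$ choices we get $|\CBB_J(f_{x,M})|\ge M^J$, whence by \eqref{formula of finite range},
\[\AE(\Delta a_x)\ge\AE(f_{x,M})=\lim_{J\to\infty}\frac{\log|\CBB_J(f_{x,M})|}{J}\ge\log M.\]
Letting $M\to\infty$ gives $\AE(\Delta a_x)=+\infty$, and Theorem~\ref{231211thm1} then yields $\AE(a_x)=+\infty$ for every irrational $x$, completing the proof.

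The one delicate point, as in Theorem~\ref{240112thm1}, is the rounding step: I must make sure that forcing $\Delta a_x(k+j)$ into the interval strictly interior to a grid cell genuinely pins the discretization $f_{x,M}(k+j)$ to the grid value $\mathbf b(j)$, and this is exactly what the slack in the error bound $\frac{1}{2M}+\frac{1}{M^2}$ from Lemma~\ref{231210lem1} is arranged to guarantee. Beyond this bookkeeping the argument is soft, since all the arithmetic content is packaged into Lemma~\ref{240112lem1} and Sarnak's admissibility criterion, and the reduction to $\Delta a_x$ removes any need to control the absolute values $a(k)x$.
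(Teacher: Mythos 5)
Your overall route is the same as the paper's: reduce to $\Delta a_x$ via Theorem~\ref{231211thm1}, discretize via Lemma~\ref{231210lem1}, realize arbitrary target blocks using Lemma~\ref{240112lem1} together with Sarnak's admissibility criterion, and count $M^J$ blocks to force infinite entropy. However, the one step you yourself single out as delicate is carried out with constants that do not work. With approximation error $\sup_n|f_{x,M}(n)-\Delta a_x(n)|\le\frac{1}{2M}+\frac{1}{M^2}$ and target intervals of half-width $\frac{1}{2M}-\frac{1}{2M^2}$, the triangle inequality only yields
\[
\bigl|f_{x,M}(k+j)-\mathbf{b}(j)\bigr| < \Bigl(\frac{1}{2M}-\frac{1}{2M^2}\Bigr)+\Bigl(\frac{1}{2M}+\frac{1}{M^2}\Bigr)=\frac{1}{M}+\frac{1}{2M^2},
\]
and this interval contains \emph{three} grid points, namely $\mathbf{b}(j)-\frac1M$, $\mathbf{b}(j)$, and $\mathbf{b}(j)+\frac1M$. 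So driving the difference value strictly inside its cell does not pin $f_{x,M}(k+j)$ to $\mathbf{b}(j)$, and your claim that the slack in the error bound from Lemma~\ref{231210lem1} ``is arranged to guarantee'' the pinning is exactly backwards: a larger approximation error makes pinning harder, not easier. As written, the assignment of realized blocks to target blocks is not well-defined, and the count $|\CBB_J(f_{x,M})|\ge M^J$ is unjustified.

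The gap is local and repairable in either of two ways. First, since Lemma~\ref{240112lem1} accepts arbitrary open intervals, you can simply shrink the targets; this is what the paper does, taking half-width $\frac{0.1}{2N}$ against error $\frac{1.5}{2N}$, so that $f_N(m+j)$ lies within $\frac{0.8}{N}<\frac{1}{N}$ of $\frac{i_j}{N}$ and is uniquely pinned. Second, you could keep your constants and note that each realized $J$-block of $f_{x,M}$ corresponds to at most $3^J$ target blocks, giving $|\CBB_J(f_{x,M})|\ge (M/3)^J$ and hence $\AE(\Delta a_x)\ge\log M-\log 3$, which still diverges as $M\to\infty$; but this extra multiplicity argument is missing from your write-up. (You appear to have inherited these constants from the proof of Theorem~\ref{240112thm1}, where the same tightness issue occurs; the paper's proof of Theorem~\ref{240116prop1} itself uses the corrected, smaller target intervals.) Apart from this step, your argument matches the paper's.
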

\begin{proof}
To prove the dual entropy of $\{a(n)\}_{n\in\N}$ is infinite, we prove that for every irrational number $x$, the entropy of $a_x(n):=a(n)x$ mod $1$ is infinite.
Given any irrational number $x$. Denote $\Delta a_x(n)=a_x(n+1)-a_x(n)$. In the following, we estimate the entropy of $\Delta a_x(n)$ for any irrational number $x$.

By Lemma \ref{231210lem1}, for any $N\in\N$, we can find a finite range function $f_N(n)$ taking values in $\{0, \frac1 N, \frac 2 N, \dots, \frac{N-1}{N}\}$, such that
$$\AE(f_N)\leq \AE(\Delta a_x)
\text{ and }
\sup\limits_{n\in\N}|f_N(n)-\Delta a_x(n)|\leq \frac{1.5}{2N}.$$

Given any $J\in\N$ and any choose of $i_1, i_2,\dots,i_J\in\{1,2, \dots,N\}$.
Let $(y_j, z_j)=(\frac{i_j}{N}-\frac{0.1}{2N}, \frac{i_j}{N}+\frac{0.1}{2N}), j=1,2,\dots,J$.
By Lemma \ref{240112lem1},
there is an admissible block of the form
\begin{align}\label{240112eq7}
(1,\underbrace{0,...,0}_{s_1-1},1,\underbrace{0,...,0}_{s_2-1},1,...1, \underbrace{0,...,0}_{s_J-1},1)
\end{align}
such that $s_jx\in (y_j,z_j),\;j=1,2,...,J.$
Therefore, $\Delta a_x$ has a $J$-block
$$\left( \Delta a_x(m+1),..., \Delta a_x(m+J)\right) = \left(s_{1}x, s_{2}x,\dots,s_{J}x\right)$$
such that $\Delta a_x(m+j)\in\left(\frac{i_j}{N}-\frac{0.1}{2N},\frac{i_j}{N}+\frac{0.1}{2N}\right), \;j=1,2,\dots,J$.
Since $f_N(n)$ takes values in $\{0, \frac{1}{N},\dots,\frac{N-1}{N}\}$ and $\sup\limits_{n\in\N}|f_N(n)-\Delta a_x(n)|\leq \frac{1.5}{2N}$, we have
$$f_N(m+j)\in\left(\frac{i_j}{N}-\frac{0.8}{N},\frac{i_j}{N}+\frac{0.8}{N}\right),\quad j=1,2,\dots,J.$$
That is, we must have $f_N(m+j)=\frac{i_j}{N},\;j=1,2,\dots,N$.
Thus, for any choose of $i_1, i_2,\dots,i_J\in\{1,2, \dots,N\}$, there exists a $J$-block in $f_N$, such that
$$\left(f_N(m+1),f_N(m+2),\dots,f_N(m+J)\right)=\left(\frac{i_1}{N},\frac{i_2}{N},\dots,\frac{i_J}{N}\right),$$
and hence
\begin{align*}
\AE(\Delta a_x)
\geq  \AE(f_N)= \lim_{J\rightarrow \infty} \frac{\log|\mathcal{B}^r_J(f_N)| }{J}
\geq  \lim_{J\rightarrow \infty} \frac{1}{J} \log N^J
= \log N.
\end{align*}
Since $N$ can be arbitrarily large,
we have $\AE(\Delta a_x) = \infty$,
hence by Theorem \ref{231211thm1},
$\AE( a_x) = \infty$, for every irrational $x$.
Therefore, the dual entropy of $\{a(n)\}_{n\in\N}$ is infinite.

\end{proof}

From Proposition \ref{240116prop1} (as well as $\AE(1_\CQQ) = 6/\pi^2 \log 2$), we see that
the dual entropy and anqie entropy of the same subset may be different.
In the following theorem, we show that these two entropy will not be too different if the difference of the subset is bounded.
\begin{thm}\label{240113thm1}
Let $A\subseteq \N$ be a set of natural numbers,
and $a(0)<a(1)<\cdots$ be all the elements in $A$,
such that $\sup_n |a(n+1)-a(n)|\le L$ for some $L>0$.
For every irrational $x$, denote $a_x(n):=a(n)x$, then
\[\AE(1_A) \le \AE(a_x) \le L\cdot \AE(1_A). \]
In particular,
$1_A(n)$ has entropy zero if and only if $\{a(n)\}_{n\in\N}$ is centered and has dual entropy  zero.
\end{thm}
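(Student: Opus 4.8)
The plan is to reduce the statement to a purely combinatorial comparison between the gap sequence and the characteristic function, and then to count blocks. Set $d(n) = a(n+1)-a(n)$; by hypothesis $d$ is a finite-range sequence taking values in $\{1,2,\dots,L\}$, so by Lemma \ref{240117lem1} one has $\AE(a_x) = \AE(\Delta a) = \AE(d)$ for every irrational $x$. Since $1_A$ and $d$ both have finite range, their entropies are computed by the block-counting formula \eqref{formula of finite range} (equivalently Lemma \ref{lem:Jblock}). Thus it suffices to prove the two inequalities $\AE(1_A) \le \AE(d)$ and $\AE(d) \le L\cdot\AE(1_A)$, and the theorem then follows from the reduction $\AE(a_x)=\AE(d)$.

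For the lower bound $\AE(1_A)\le\AE(d)$, I would show that a $J$-block of $1_A$ is essentially determined by a $J$-block of $d$. Fix $J\ge L$ and a block $(1_A(n),\dots,1_A(n+J-1))$ occurring at some $n\ge a(0)$. Because all gaps are at most $L$, this window contains a $1$ among its first $L$ entries; let $\delta\in\{0,1,\dots,L-1\}$ be the offset of the first such $1$, occurring at $a(k)=n+\delta$. The positions of all subsequent $1$'s in the window are then $a(k),\, a(k)+d(k),\, a(k)+d(k)+d(k+1),\dots$, so the whole block is reconstructed from the pair $\bigl(\delta,(d(k),d(k+1),\dots,d(k+J-1))\bigr)$, i.e.\ from an offset and a $J$-block of $d$ (at most $J$ gaps are needed since each gap is $\ge 1$). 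This exhibits a surjection from $\{0,\dots,L-1\}\times\CBB_J(d)$ onto the set of $J$-blocks of $1_A$ occurring at positions $\ge a(0)$; the finitely many blocks occurring only inside the prefix $[0,a(0))$ contribute an $O(1)$ term that is irrelevant in the limit. Hence $|\CBB_J(1_A)|\le L\,|\CBB_J(d)|+O(1)$, and dividing by $J$ and letting $J\to\infty$ gives $\AE(1_A)\le\AE(d)$.

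For the upper bound $\AE(d)\le L\cdot\AE(1_A)$, the idea is reversed: a $J$-block of $d$ is read off from a block of $1_A$ of length roughly $JL$. Given a $J$-block $(d(k),\dots,d(k+J-1))$, consider the $1_A$-block of length $JL+1$ starting at $a(k)$, namely $(1_A(a(k)),\dots,1_A(a(k)+JL))$. Since $a(k+J)-a(k)=\sum_{i=0}^{J-1}d(k+i)\le JL$, this window contains the $J+1$ consecutive elements $a(k),\dots,a(k+J)$ of $A$, and the gaps between the first $J+1$ ones recover exactly $(d(k),\dots,d(k+J-1))$. Reading off these gaps defines a surjection from a subset of $\CBB_{JL+1}(1_A)$ (those blocks that begin with a $1$ and contain at least $J+1$ ones) onto $\CBB_J(d)$, so $|\CBB_J(d)|\le|\CBB_{JL+1}(1_A)|$. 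Writing $\tfrac1J\log|\CBB_{JL+1}(1_A)| = \tfrac{JL+1}{J}\cdot\tfrac{1}{JL+1}\log|\CBB_{JL+1}(1_A)|$ and letting $J\to\infty$, the first factor tends to $L$ and the second to $\AE(1_A)$, yielding $\AE(d)\le L\cdot\AE(1_A)$.

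Finally, the \emph{in particular} clause follows immediately from the sandwich $\AE(1_A)\le\AE(a_x)\le L\cdot\AE(1_A)$: if $\AE(1_A)=0$ then $\AE(a_x)=0$ for every irrational $x$, and since $d$ is bounded, Lemma \ref{240117lem1} already guarantees that $a_x$ has this common entropy for all irrational (hence almost all) $x$, so $A$ is centered with $\AE^*(A)=0$; conversely, if $A$ is centered with dual entropy $0$ then $\AE(a_x)=0$ for a.e.\ $x$, forcing $\AE(1_A)=0$. I expect the main obstacle to be the careful bookkeeping in the two block-counting maps—pinning down the offset in the lower bound (and dispatching the finite prefix before $a(0)$) and verifying that the length-$(JL+1)$ window in the upper bound always captures $J+1$ complete gaps—rather than any deep conceptual difficulty; once the two surjections are set up correctly, both inequalities drop out of the block-counting formula.
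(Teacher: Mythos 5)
Your proof is correct, and at its core it is the same block-counting argument as the paper's, but the reduction is organized differently and the difference is worth recording. The paper never passes to the integer gap sequence: it works throughout with the real-valued difference $\Delta a_x(n)=(a(n+1)-a(n))x \bmod 1$, constructs an injective map from $\CBB_J(\Delta a_x)$ into $\CBB_{LJ}(1_A)$ for the upper bound and an injective map from the $LJ$-blocks of $1_A$ beginning with $1$ into $L$-tuples from $\CBB_J(\Delta a_x)\cup\{\mathrm{NULL}\}$ for the lower bound (using irrationality of $x$ inside both counts to recover $b$ from $bx\bmod 1$), and only then applies Theorem \ref{231211thm1} to convert $\AE(\Delta a_x)$ into $\AE(a_x)$. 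You instead invoke Lemma \ref{240117lem1} once, up front, so that both inequalities become purely combinatorial comparisons between the integer sequences $1_A$ and $d=\Delta a$; irrationality is then used exactly once, inside the cited lemma (which itself rests on Theorem \ref{231211thm1}, so the logical dependence is ultimately the same). Your counts also differ in shape from the paper's: for $\AE(1_A)\le\AE(d)$ you compare $J$-blocks with $J$-blocks via an offset-plus-gaps surjection, getting $|\CBB_J(1_A)|\le L\,|\CBB_J(d)|+O(1)$, which is tidier than the paper's decomposition of an $LJ$-block into $L$ grouped sub-blocks with NULL padding; for $\AE(d)\le L\cdot\AE(1_A)$ your window-reading surjection is essentially the inverse of the paper's injection, so the counting content there is identical. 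What your route buys is cleaner bookkeeping (integer-valued blocks only, no padding, no case analysis about which real multiples coincide); what the paper's buys is self-containedness modulo Theorem \ref{231211thm1}, without needing Lemma \ref{240117lem1} as an intermediary. Your handling of the ``in particular'' clause is also sound: the sandwich plus Lemma \ref{240117lem1} gives centeredness, and the converse direction only needs one irrational $x$ with $\AE(a_x)=0$.
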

\begin{proof}
We first prove the second inequality,
that is,
\begin{align}\label{240115eq6}
\AE(a_x) \le L\cdot \AE(1_A).
\end{align}

Note that
$x$ is irrational.
Let
\[\textbf{b} = (b_1x, b_2x,... b_J x) \mod 1\]
be any $J$-block of $\Delta a_x(n) $,
then $1_A$ must have a block $\widehat{\textbf{b}} $ with length $LJ$ such that
\begin{align*}
\widehat{\textbf{b}} = (1,\underbrace{0,...,0}_{b_1-1},1, \underbrace{0,...,0}_{b_2-1},1,...1, \underbrace{0,...,0}_{b_J-1},1, * ****)
\end{align*}
Hence there is an injective map from $\mathcal{B}_{J}(\Delta a_x)$ to $\mathcal{B}_{LJ}(1_A) $,
and  $ |\mathcal{B}_{J}(\Delta a_x) | \le |\mathcal{B}_{LJ}(1_A)|$.
(When
$x$ is rational,
the map may not be well-defined,
since each $\textbf{b}$ may correspond to multiple $\widehat{\textbf{b}} $.
But we still have  $ |\mathcal{B}_{J}(\Delta a_x) | \le |\mathcal{B}_{LJ}(1_A)|$.)
Hence by \eqref{formula of finite range} we have
\begin{align*}
\AE(\Delta a_x) = \lim_{J\rightarrow \infty} \frac{\log|\mathcal{B}_{J}(\Delta a_x)|}{J} \le  \lim_{J\rightarrow \infty} \frac{\log|\mathcal{B}_{LJ}(1_A)|}{J} = L\cdot \AE(1_A),
\end{align*}
which gives \eqref{240115eq6}.

Next we prove \begin{align}\label{240304eq1}
\AE(1_A)\le \AE(a_x).
\end{align}
Consider an $(LJ)$-block $\widehat{\textbf{b}}$ of $1_A(n)$ starting with ``1", say
\begin{align}\label{240114eq1}
\widehat{\textbf{b}} = (1,\underbrace{0,...,0}_{b_1-1},1, \underbrace{0,...,0}_{b_2-1},1,...,1, \underbrace{0,...,0}_{b_K-1}),
\end{align}
with $b_1,...,b_K\ge 1$ and $b_1+\cdots +b_K=LJ$.
The set of all such blocks of $1_A(n)$ is denoted by $\CBB_{LJ}^1(1_A)$.
We will show $|\CBB_{LJ}^1(1_A)|\le (|\CBB_{J}(\Delta a_x)|+1)^L$, by constructing an injective map from $|\CBB_{LJ}^1(1_A)|$ to $(\CBB_{J}(\Delta a_x) \cup \{\text{NULL}\})^L$.

For every block $\widehat{\textbf{b}}$ in \eqref{240114eq1},
We group $b_1,...,b_K$ into subsets, each subset having exactly $J$ numbers except the last one,
and get blocks
\begin{align*}
\widehat{\textbf{b}}_1 =& (1,\underbrace{0,...,0}_{b_1-1},1,...,1, \underbrace{0,...,0}_{b_J-1},1),\quad  \widehat{\textbf{b}}_2 = (1,\underbrace{0,...,0}_{b_{J+1}-1},1,...,1, \underbrace{0,...,0}_{b_{2J}-1},1) , \\ \cdots,\quad \;
\widehat{\textbf{b}}_m = &(1,\underbrace{0,...,0}_{b_{(m-1)J+1}-1},1,...,1, \underbrace{0,...,0}_{b_{K-1}-1}, 1).
\end{align*}
Note that for each $0\le i\le m-1$ we added an extra ``1" at the end of $\widehat{\textbf{b}}_i$,
and we deleted the zeros at the end of $\widehat{\textbf{b}}_m$.
Each $\widehat{\textbf{b}}_i$, $0\le i\le m$, corresponds to a $J$-block of $\Delta a_x(n) $, for instance,
\begin{align*}
(1,\underbrace{0,...,0}_{b_1-1},1,...,1, \underbrace{0,...,0}_{b_J-1},1) \;\; \mapsto \;\;
\textbf{b}_1:=  (b_1x, ...,b_Jx) \mod 1.
\end{align*}
Here $\widehat{\textbf{b}}_m$ may correspond to multiple numbers of $J$-block of $\Delta a_x(n) $,
and we pick any one of them to be $\textbf{b}_m $.
Note that $m\le L$ since $K\le LJ$.
Now it is easy to check that the map
\[\widehat{\textbf{b}} \mapsto \left( \textbf{b}_1, \textbf{b}_2,..., \textbf{b}_m, \text{NULL},...,\text{NULL} \right) \in  \left(\CBB_{J}(\Delta a_x) \cup \{\text{NULL}\}\right)^L,\]
where we add some ``NULL" if $m<L$,
is injective.
So $|\CBB_{LJ}^1(1_A)|\le (|\CBB_{J}(a_x)|+1)^L$.
Moreover,
every $LJ$-block of $1_A$ is a concatenation of at most $LJ$ 0's and one block from $\CBB_{LJ}^1(1_A)$,
so $|\CBB_{LJ}(1_A)| \le LJ|\CBB_{LJ}^1(1_A)|$,
and
\[\AE(1_A) = \lim_{J\rightarrow \infty} \frac{\log |\CBB_{LJ}(1_A)| }{LJ} \le \lim_{J\rightarrow \infty} \frac{\log ( \left(LJ|\CBB_{J}(\Delta a_x)|+1)^L \right)}{ LJ} =\AE(\Delta a_x ).
\]
Now by Theorem \ref{231211thm1} we have \eqref{240304eq1}.
\end{proof}

\section{A generalization of Furstenberg's $\times 2\times 3$ conjecture}
\label{sect:furstenberg}

In \cite{furstenberg1970},
Furstenberg proposed the following $\times 2\times 3$ conjecture:
let $p$ and $q$ be two integers $\ge 2$ with $\log p/\log q\notin \Q$,
then for every $x \in[0,1)\setminus \Q$,
$$\dim \overline{\{p^nx \bmod 1:n\in\N\}} + \dim \overline{\{q^nx \bmod 1:n\in\N \}} \geq 1.$$
In Section \ref{240115sec1}, we see
that when $a(n)=p^n$ for such an integer $p\ge 2$,
$$ \AE(a_x(n)) = \log p\cdot\dim \overline{\{p^nx \bmod 1:n\in\N\}},$$
and hence $\AE^*(a(n)) = \log p$.
So we can restate Furstenberg's conjecture as follows:
let $p$ and $q$ be two integers $\ge 2$ with $\log p/\log q\notin \Q$,
then for every $x \in[0,1) \backslash \mathbb{Q}$,
$$\frac{\AE(\{p^nx\}_{n\in\N})}{\AE^*(\{p^n\}_{n\in\N} )} + \frac{\AE(\{q^nx\}_{n\in\N})}{\AE^*(\{q^n\}_{n\in\N} )} \ge 1. $$
With the notion of dual entropy,
it is natural to extend Furstenberg's conjecture to general sequences integer $a(n)$ and $b(n)$.
\begin{conj}\label{240115prob1}
Let $\{a(n)\}_{n\in\N}$ and $\{b(n)\}_{n\in\N}$ be two centered sequences of integers, such that $\AE^*(\{a(n)\}_{n\in\N})>1$ and $\AE^*(\{b(n)\}_{n\in\N})>1$ and
$$ \frac{\AE^*(\{a(n)\}_{n\in\N})}{\AE^*(\{b(n)\}_{n\in\N})}\not\in \Q .$$
Then for every irrational $x$,
\begin{align*}
\frac{\AE(\{a(n)x\}_{n\in \N})}{ \AE^*(\{a(n)\}_{n\in\N})} + \frac{\AE(\{b(n)x\}_{n\in \N})}{ \AE^*( \{b(n)\}_{n\in\N} )} \ge 1.
\end{align*}
\end{conj}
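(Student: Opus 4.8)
The plan is to recognize at the outset that Conjecture~\ref{240115prob1} is a genuine generalization of Furstenberg's $\times2\times3$ conjecture, and hence at least as hard. Taking $a(n)=p^n$ and $b(n)=q^n$, Proposition~\ref{240115prop1} together with the identity $\AE(\{p^nx\}_{n\in\N})=\log p\cdot\dim\overline{\{p^nx\bmod 1\}}$ recalled in Section~\ref{sect:furstenberg} shows that each normalized ratio $\AE(a_x)/\AE^*(a)$ is exactly the Hausdorff dimension of the corresponding $\times p$ orbit closure, so the asserted inequality collapses to Furstenberg's. Since the latter is open, the realistic objective is not a self-contained proof but a \emph{reduction}: to isolate the abstract features of the pair $(a,b)$ that make the entropy inequality equivalent to---or a consequence of---a Furstenberg-type rigidity statement, so that future progress on the classical conjecture transfers automatically.

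The first step I would take is to promote the normalized entropy to a dimension. For a centered sequence $a(n)$ with $\AE^*(a)=\lambda_a$, the anqie $(X_{a_x},\sigma_A)$ generated by the point $a_x=(a(n)x\bmod 1)_{n\in\N}$ is a symbolic system of topological entropy $\AE(a_x)$, while the ``generic'' system realized for almost every $x$ has entropy $\lambda_a$. Using Lemma~\ref{lem:Jblock}, I would compare the block-growth rate $\log|\CBB_J(a_x)|$ with its almost-everywhere value $J\lambda_a$ and seek a clean statement: for every irrational $x$, the ratio $\AE(a_x)/\lambda_a$ equals a box/Hausdorff dimension $\dim_a(x)\in[0,1]$ of the orbit closure of $a_x$ inside the generic system, normalized so that the generic system has dimension $1$. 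This would recast the conjecture as the inequality $\dim_a(x)+\dim_b(x)\ge 1$.

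The second step is to import Furstenberg's mechanism. In the classical case the two orbit closures live in a single torus acted on by the \emph{commuting} endomorphisms $\times p$ and $\times q$, and the hypothesis $\log p/\log q\notin\Q$ encodes their multiplicative independence; the sum-of-dimensions bound is then (conjecturally) forced by the absence of common invariant structure. I would try to manufacture the analogous picture by encoding $a_x$ and $b_x$ on a common symbolic space and showing that the incommensurability $\AE^*(a)/\AE^*(b)\notin\Q$ plays the role of $\log p/\log q\notin\Q$, obstructing simultaneous degeneration of the two dimensions.

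The hard part, beyond the open status of Furstenberg's conjecture itself, is that a general centered sequence $a(n)$ carries no multiplicative self-similarity: the orbit closure of $a_x$ need not be invariant under any single endomorphism of a torus, so the measure-rigidity tools (in the spirit of Rudolph--Johnson) underlying the classical approach have no direct analogue. I therefore expect the feasible intermediate results to be (i) a \emph{conditional} proof, assuming Furstenberg's dimension conjecture, for sequences possessing an algebraic self-similar structure---for instance $a(n)$ satisfying a linear recurrence, where the dynamics of $a_x$ are genuinely governed by a toral endomorphism; and (ii) an unconditional but weaker inequality, with the right-hand side replaced by a positive constant $c<1$, obtained by quantifying how incommensurability of the dual entropies prevents both block-growth rates from being simultaneously small. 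An unconditional proof of the full statement for arbitrary centered $a$ and $b$ appears to require essentially new ideas and is, at present, out of reach.
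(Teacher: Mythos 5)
You were asked to prove a statement that the paper itself presents only as a conjecture: the paper explicitly remarks that ``little is known for Conjecture~\ref{240115prob1} except for some trivial cases,'' so your refusal to fabricate a proof and your framing of the task as a reduction problem is the correct call. Your opening observation --- that taking $a(n)=p^n$, $b(n)=q^n$ and invoking Proposition~\ref{240115prop1} together with the dimension identity turns the inequality into Furstenberg's $\times 2\times 3$ conjecture --- is precisely the paper's own discussion in Section~\ref{sect:furstenberg}, so the statement is indeed at least as hard as an open problem.

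Where you diverge from the paper is in the choice of partial result. Your proposed intermediate goals --- (i) a proof conditional on Furstenberg's conjecture for sequences with algebraic self-similarity, and (ii) an unconditional bound with a constant $c<1$ --- both still engage with the interaction between the two sequences, which is exactly the hard part. The paper's Proposition~\ref{240117prop1} sidesteps that interaction entirely. It takes $a(n)=p^n+c(n)$ and $b(n)=q^n+d(n)$ where the perturbations have uniformly bounded differences and large entropy, $\AE(\Delta c)>3\log p$ and $\AE(\Delta d)>3\log q$. Subadditivity (Lemma~\ref{231210cor1}(iii)), Lemma~\ref{240117lem1} and Proposition~\ref{240115prop1} then give $\AE(\Delta c)-\log p\le \AE(a_x)\le \AE(\Delta c)+\log p$ for every irrational $x$; assuming $a(n)$ is centered, the same sandwich bounds $\AE^*(a(n))$, and the ratio of the two bounds forces $\AE(a_x)/\AE^*(a(n))>1/2$ for \emph{every} irrational $x$, and likewise for $b$. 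The conjectured inequality then holds because each summand individually exceeds $1/2$ --- no measure rigidity, no commuting endomorphisms, no Furstenberg input at all. The price of this elementary route is twofold: it is conditional on the centeredness of $a$ and $b$ (assumed, not proved), and it only covers sequences engineered so that the conclusion is forced termwise. Your proposed directions, though far harder and currently out of reach, are the ones that would address the genuine content of the conjecture, namely the cases where the two normalized entropies can trade off against each other.
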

So far, little is known for Conjecture \ref{240115prob1} except for some trivial cases.
In the following, we prove a special case of Conjecture \ref{240115prob1} under the assumption that every integer sequence is centered.

Let $p\ge 2$ be an integer, and $a(n)$ be constructed as in Example \ref{240116exm1},
that is,
$a(n) = p^n+c(n)$ for some sequence $c(n)$ such that $ \Delta c(n)$ is uniformly bounded and $\AE(\Delta c)>3\log p$.
Note that we do not require $a(n)$ to be increasing.
Then,
by Lemma \ref{231210cor1} (iii),
\begin{align*}
\AE(c_x(n)) - \AE(\{p^nx\}_{n\in \N}) \le \AE(a_x(n)) \le \AE(c_x(n)) + \AE(\{p^nx\}_{n\in \N}),\quad \forall x\in [0,1).
\end{align*}
By Lemma \ref{240117lem1},
$\AE(c_x(n)) = \AE(\Delta c)$ for every irrational $x$,
and by Proposition \ref{240115prop1},
$\AE(\{p^n x\}_{n\in \N}) \le \log p$.
So indeed, we have
\begin{align}\label{240117eq3}
\AE(\Delta c(n)) - \log p \le \AE(a_x(n)) \le \AE(\Delta c(n)) + \log p,\quad \forall x\not\in \Q.
\end{align}
Hence, if we assume $\{a(n)\}_{n\in \N}$ is centered,
then
\begin{align}\label{240117eq2}
\AE(\Delta c(n)) -\log p \le \AE^*( a(n) ) \le \AE(\Delta c(n))+\log p.
\end{align}
Recall we assume $\AE(\Delta c)>3\log p$,
then \eqref{240117eq3} and \eqref{240117eq2} implies that $$\AE(a_x(n)) > \frac12 \AE^*( a(n) )$$ for every irrational $x$.
Similarly, if we construct $b(n)$ in a similar way,
then we also have
$$\AE(b_x(n)) > \frac12 \AE^*( b(n) )$$ for every irrational $x$.
So we have the following proposition,
which partial answers Conjecture \ref{240115prob1}.

\begin{prop}\label{240117prop1}
Let $p,q\ge 2$ be two integers.
Let $a(n) = p^n+c(n)$ for some sequence $c(n)$ of integers satisfying $\Delta c(n)$ is uniformly bounded and $\AE(\Delta c)>3\log p$,
and let $b(n) = q^n+d(n)$ for some sequence $d(n)$ of integers satisfying $\Delta d(n)$ is uniformly bounded and $\AE(\Delta d)>3\log q$.
If $\{a(n)\}_{n\in \N}$ and $\{b(n)\}_{n\in \N}$ are centered,
then for every irrational $x$,
\begin{align*}
\frac{\AE(\{a(n)x\}_{n\in \N})}{ \AE^*( a(n) )} + \frac{\AE(\{b(n)x\}_{n\in \N})}{ \AE^*( b(n) )} \ge 1.
\end{align*}
\end{prop}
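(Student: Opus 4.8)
The plan is to prove the stronger pointwise statement that each summand separately exceeds $\tfrac12$: that is, $\AE(\{a(n)x\}_{n\in \N}) > \tfrac12\,\AE^*(a(n))$ and $\AE(\{b(n)x\}_{n\in \N}) > \tfrac12\,\AE^*(b(n))$ for every irrational $x$. Since the two denominators are positive (they exceed $3\log p$ and $3\log q$ respectively up to the $\log p$, $\log q$ corrections below), adding these two strict inequalities immediately yields the desired bound $>1$. Thus the entire problem reduces to a single one-variable estimate, applied once to the triple $(p,c,a)$ and once to $(q,d,b)$.

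To establish that estimate I would exploit the additive decomposition $a(n)x = p^nx + c(n)x \bmod 1$. Writing $c_x(n):=c(n)x\bmod 1$ and treating $\{p^nx\}_{n\in\N}$ as a torus-valued sequence, I apply subadditivity of anqie entropy (Lemma \ref{231210cor1}(iii)) to both identities $a_x = c_x + \{p^nx\}$ and $c_x = a_x - \{p^nx\}$, obtaining
\begin{align*}
\AE(c_x(n)) - \AE(\{p^nx\}_{n\in \N}) \le \AE(a_x(n)) \le \AE(c_x(n)) + \AE(\{p^nx\}_{n\in \N}).
\end{align*}
Since $\Delta c(n)$ is uniformly bounded, Lemma \ref{240117lem1} evaluates $\AE(c_x(n)) = \AE(\Delta c)$ for every irrational $x$, while Proposition \ref{240115prop1} bounds the geometric factor by $\AE(\{p^nx\}_{n\in \N}) \le \log p$. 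Substituting gives, for every irrational $x$,
\begin{align*}
\AE(\Delta c) - \log p \le \AE(a_x(n)) \le \AE(\Delta c) + \log p.
\end{align*}
Because this sandwich holds for \emph{every} irrational $x$ and $a(n)$ is assumed centered, the common value $\AE^*(a(n))$ (attained for almost all $x$) lies in the same interval, so in particular $\AE^*(a(n)) \le \AE(\Delta c) + \log p$.

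At this point the hypothesis $\AE(\Delta c) > 3\log p$ does the decisive work. Combining the lower bound $\AE(a_x(n)) \ge \AE(\Delta c) - \log p$ with the upper bound $\AE^*(a(n)) \le \AE(\Delta c) + \log p$, it suffices to verify $\AE(\Delta c) - \log p > \tfrac12(\AE(\Delta c) + \log p)$, which rearranges to exactly $\AE(\Delta c) > 3\log p$. Hence $\AE(a_x(n)) > \tfrac12\,\AE^*(a(n))$, and the identical argument with $q,d,b$ replacing $p,c,a$ yields $\AE(b_x(n)) > \tfrac12\,\AE^*(b(n))$; adding the two finishes the proof.

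The computational core is entirely routine once the pieces are assembled, and I do not expect a genuine obstacle. The only two points needing care are (i) confirming that the subadditivity lemma applies to torus-valued sequences exactly as to $\mathbb{C}$-valued ones, so that the difference $a_x - \{p^nx\}$ legitimately recovers $c_x$; and (ii) the transfer from the every-$x$ sandwich to a bound on the dual entropy $\AE^*(a(n))$, which relies on centeredness forcing $\AE(a_x)$ to be almost-everywhere constant and on that constant lying inside the interval already established for all irrational $x$. The real \emph{content} of the statement is the clean numerical coincidence that the threshold $3\log p$ is precisely what is needed to push each fraction past $\tfrac12$.
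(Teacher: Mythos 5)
Your proposal is correct and follows essentially the same route as the paper's own argument: the same entropy sandwich $\AE(\Delta c) - \log p \le \AE(a_x(n)) \le \AE(\Delta c) + \log p$ obtained from Lemma \ref{231210cor1}(iii), Lemma \ref{240117lem1}, and Proposition \ref{240115prop1}, the same transfer to $\AE^*(a(n))$ via centeredness, and the same use of the threshold $\AE(\Delta c) > 3\log p$ to force each summand above $\tfrac12$. The two points you flag as needing care are handled (implicitly) in exactly the same way in the paper, so there is nothing further to add.
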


\section{Further discussions on the M\"{o}bius disjointness conjecture}
\label{sect:sarnak}
One motivation to study the entropy of sequences is their disjointness from certain arithmetic functions.
It is believed that all the zero entropy sequences are disjoint from certain multiplicative arithmetic functions
because zero entropy means the sequence exhibits strong regularity with respect to addition,
and the addition operator on $\N$ behaves independently to the multiplication operator in some sense.
Sarnak \cite{sarnak2011lectures} (see also \cite{liu_sarnak2015}) conjectured that all sequences arising from zero entropy dynamics systems are disjoint from the M\"{o}bius function $\mu(n)$.
It is explained in \cite{wei2022anqie} (see also \cite{ge2016}) that we can restate Sarnak's conjecture in anqie entropy,
which,
by Lemma \ref{approach by finite ranges with zero anqie entropy},  is equivalent to the following statement:
for every subset $A\subseteq \N$ with $\AE(1_A(n))=0$ we have
\[ \lim_{N\rightarrow \infty } \frac 1N \sum_{n=1}^N 1_A (n) \mu(n) =0 .\]
Since the notion,
dual entropy,
also describes the complexity of subsets $A$ of $\N$,
we believe that the sequences with zero dual entropy are disjoint from the M\"{o}bius function.
\begin{conj}\label{240117conj2}
For every subset $A\subseteq \N$ with $\AE^*(A)=0$, we have
\[ \lim_{N\rightarrow \infty } \frac 1N \sum_{n=1}^N 1_A (n) \mu(n) =0 .\]
\end{conj}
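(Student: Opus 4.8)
The plan is to reduce Conjecture~\ref{240117conj2} to the anqie version of Sarnak's conjecture recalled above, using the entropy comparison of Theorem~\ref{240113thm1} as the bridge. Write $A=\{a(0)<a(1)<\cdots\}$, and recall that the restated anqie-Sarnak statement asserts that $\AE(1_A)=0$ forces $\frac1N\sum_{n=1}^N 1_A(n)\mu(n)\to 0$. It therefore suffices to show that the hypothesis $\AE^*(A)=0$ propagates to the symbolic entropy, $\AE(1_A)=0$; granting this implication, the desired M\"obius correlation bound is exactly the conclusion of anqie-Sarnak.

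First I would dispose of the case of bounded gaps, $\sup_n|a(n+1)-a(n)|\le L$. Here Theorem~\ref{240113thm1} states precisely that $\AE(1_A)=0$ if and only if $\{a(n)\}$ is centered with dual entropy zero, so for such $A$ the reduction to anqie-Sarnak is immediate and Conjecture~\ref{240117conj2} holds conditionally on that statement.

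The substantive step is to remove the bounded-gap assumption, i.e.\ to prove $\AE^*(A)=0\Rightarrow\AE(1_A)=0$ in general; this regime already contains the most natural examples, such as the sets with bounded $k$-th difference for $k\ge 2$ (the squares $\{n^2\}$, for which $\AE^*(A)=\AE(\Delta^2 a)=0$ yet the ordinary gaps are unbounded). The natural route is through the gap sequence $g(n):=\Delta a(n)$: by the centeredness propositions of Section~\ref{section_twoentropy}, $\AE^*(A)=0$ forces $g$ to be centered with $\AE^*(g)=0$, so $g(n)x\bmod 1$ has zero entropy for a.e.\ $x$, and one expects $g$ to be pinned down as an entropy-zero gap pattern. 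The obstruction is that the inequality $\AE(1_A)\le\AE(a_x)$ in the proof of Theorem~\ref{240113thm1} was obtained by grouping an $(LJ)$-block of $1_A$ into at most $L$ blocks of $\Delta a_x$, a maneuver that uses the gap bound to control lengths; when gaps are unbounded a single $J$-block of $\Delta a_x$ corresponds to an $1_A$-block of unbounded length $b_1+\cdots+b_J$, so the counting comparison collapses and must be replaced. A promising substitute is an Abramov-type relation expressing $\AE(1_A)$ as the entropy of the gap process divided by the mean gap, which would yield $\AE(1_A)=0$ once the gap process has zero entropy; making this rigorous for an arbitrary centered $g$ with $\AE^*(g)=0$ is where I expect the main difficulty to lie.

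Finally I would stress the two layers of obstruction. Even granting the entropy implication, Conjecture~\ref{240117conj2} still rests on anqie-Sarnak, which is itself open, so an unconditional argument would require genuinely new information about M\"obius sums along structured sparse sequences rather than a formal reduction. For isolated unbounded-gap examples the conclusion is sometimes available independently---for the squares, $\mu$ vanishes on $\{n^2:n\ge 2\}$ and the sum is $O(1)$---but no general mechanism is known. Thus the hardest part is not the bounded-gap reduction, which is bookkeeping on top of Theorem~\ref{240113thm1}, but the combination of transporting the dual-entropy hypothesis to $\AE(1_A)$ without a gap bound and the residual dependence on Sarnak's conjecture.
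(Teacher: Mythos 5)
The first thing to say is that the paper does not prove this statement at all: it is Conjecture~\ref{240117conj2}, posed as an open problem, and Section~\ref{sect:sarnak} offers only heuristic support (examples such as periodic sequences and zero-density sets, plus a proposition showing that the difference operator does \emph{not} preserve M\"obius disjointness). So there is no paper proof to compare against, and your proposal --- as you yourself make explicit --- is not a proof either. It is a conditional program resting on two statements that are both open: (a) the anqie-entropy form of Sarnak's M\"obius disjointness conjecture, to which you reduce; and (b) the transfer implication $\AE^*(A)=0\Rightarrow\AE(1_A)=0$ for sets with unbounded gaps. Your bounded-gap observation is correct and matches the paper: Theorem~\ref{240113thm1} does show that when $\sup_n|a(n+1)-a(n)|\le L$, zero dual entropy is equivalent to $\AE(1_A)=0$, so in that regime Conjecture~\ref{240117conj2} would follow from anqie--Sarnak. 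But that is a conditional statement stacked on an open conjecture, not a proof of anything.

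The unbounded-gap bridge (b) is also farther from ``bookkeeping'' than your sketch suggests, for three concrete reasons. First, the proposed Abramov-type formula $\AE(1_A)=h(\mathrm{gaps})/(\mathrm{mean\ gap})$ degenerates exactly where you need it: for density-zero sets the mean gap is infinite, while $\AE(1_A)$ need not vanish. Second, once gaps are unbounded, the gap sequence $\Delta a(n)$ and the sequence $\Delta a_x(n)$ have infinite range, so the block-counting formula of Lemma~\ref{lem:Jblock} --- the engine behind every entropy computation in the paper, including both inequalities of Theorem~\ref{240113thm1} --- is simply unavailable, and one would have to argue directly with open covers of the associated dynamical system; in particular, knowing that $a(n)x \bmod 1$ has zero entropy for a.e.\ $x$ does not directly pin down block counts of the integer gap pattern. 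Third, the paper's own Theorem~\ref{240116prop1} shows that without a gap bound the two notions genuinely decouple: for the square-free numbers $\CQQ$ (whose gaps are unbounded) one has $\AE(1_\CQQ)=\frac{6}{\pi^2}\log 2$ while $\AE^*(\CQQ)=\infty$, so the multiplicative comparison $\AE(a_x)\le L\cdot\AE(1_A)$ fails catastrophically, and it is precisely unknown which half of Theorem~\ref{240113thm1} survives in general. In short, your reduction is a sensible research plan, faithful to the paper's framing of the conjecture as the dual-entropy analogue of Sarnak's conjecture, but both of its pillars are unproven, so the conjecture remains exactly as open after your argument as before it.
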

Many sequences satisfying Sarnak's conjecture also have dual entropy zero.
Simple examples include periodic sequences and characteristic functions supported on zero-density sets.
Another class of $\{0,1\}$-sequences known to satisfy
Sarnak's conjecture is constructed from polynomials,
or generated polynomials as defined in \cite{bergelson2007distribution}  (see \cite{green_tao2012mobius}.)
For example,
letting $p(n)$ be a polynomial or a generated polynomial,
define
\[a(n) = 1 \text{ if } p(n)\bmod 1\in [0,1/2);\quad  a(n) = 0 \text{ if } p(n)\bmod 1\in [1/2,1).\]
Then it is known that the sequence $a(n)$ has zero (anqie) entropy and is disjoint from $\mu(n)$.

The M\"{o}bius disjointness conjecture suggests that the entropy of a sequence is related to its disjointness from $\mu(n)$,
and entropic van der Corput's Theorem 
says that the entropy of a sequence will not change if we take the difference.
Hence it is natural to ask that whether the difference operator keeps the disjointness of a sequence from $\mu(n)$.
The following proposition answers this question negatively
by constructing a sequence $a(n)$ which is not disjoint from $\mu(n)$ and whose difference does.

\begin{prop}
There is a sequence $a(n)$ such that
\[\lim_{N\rightarrow \infty } \frac 1N \sum_{n=1}^N \Delta a(n) \mu(n)=0,\]
but
\[ \liminf_{N\rightarrow \infty } \frac 1N \sum_{n=1}^N a (n) \mu(n) \ge \frac{2}{\pi^2}>0 .\]
\end{prop}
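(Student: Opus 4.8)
The plan is to exhibit the explicit bounded sequence $a(n)=\mu(n)+\mu(n-1)$ (with the convention $\mu(0)=0$), and to show that its difference is M\"obius--disjoint for a purely telescoping reason, while $a(n)$ itself keeps a positive correlation coming entirely from the diagonal term $\sum\mu^2(n)$. The guiding idea is that $a$ must contain the raw values of $\mu$ in order to correlate with $\mu$ at all, since the only unconditionally available source of a linear-size correlation with $\mu$ is $\sum_{n\le N}\mu^2(n)\sim\frac{6}{\pi^2}N$; the symmetric two-term window is engineered precisely so that this raw copy of $\mu$ cancels upon differencing.

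First I would verify disjointness of the difference. Since $\Delta a(n)=a(n+1)-a(n)=\mu(n+1)-\mu(n-1)$, we obtain
\[
\sum_{n=1}^N \Delta a(n)\,\mu(n)=\sum_{n=1}^N \mu(n+1)\mu(n)-\sum_{n=1}^N \mu(n-1)\mu(n).
\]
After reindexing, both sums are the single shift-one autocorrelation $\sum_m \mu(m)\mu(m+1)$ taken over ranges that differ only at the endpoints, so their difference equals $\mu(N)\mu(N+1)-\mu(0)\mu(1)=O(1)$. Hence $\frac1N\sum_{n\le N}\Delta a(n)\mu(n)\to 0$ unconditionally; crucially we never need the value of this (Chowla-type) two-point correlation, only that its two copies cancel.

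Next I would bound the correlation of $a$ from below. Writing
\[
\sum_{n=1}^N a(n)\mu(n)=\sum_{n=1}^N \mu^2(n)+\sum_{n=1}^N \mu(n)\mu(n-1),
\]
the first sum is $\frac{6}{\pi^2}N+o(N)$. For the second I would use only a one-sided estimate: the summand satisfies $\mu(n)\mu(n-1)\ge -1$, with equality possible only when $n$ and $n-1$ are both squarefree, so $\sum_{n\le N}\mu(n)\mu(n-1)\ge -\#\{n\le N:\ n,\,n-1\text{ both squarefree}\}$. I would bound this count by discarding every squarefree constraint except the mod-$4$ one forced on $n-1$ (if $n-1$ is squarefree then $4\nmid n-1$), leaving $\#\{n\le N:\ n\text{ squarefree},\ n\equiv 2,3 \bmod 4\}$, whose density is $\frac{4}{\pi^2}$ (the squarefree integers distribute with densities $\tfrac{2}{\pi^2},\tfrac{2}{\pi^2},\tfrac{2}{\pi^2}$ over the classes $1,2,3\bmod 4$, the class $0$ being empty). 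Therefore $\sum_{n\le N}a(n)\mu(n)\ge\big(\tfrac{6}{\pi^2}-\tfrac{4}{\pi^2}\big)N+o(N)=\tfrac{2}{\pi^2}N+o(N)$, whence $\liminf_N \frac1N\sum_{n\le N} a(n)\mu(n)\ge \frac{2}{\pi^2}$.

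The main obstacle is conceptual rather than computational: both the difference sum and the correlation sum of $a$ involve the two-point M\"obius correlation $\sum\mu(n)\mu(n-1)$, which is \emph{not} known to be $o(N)$ (it is an instance of Chowla's conjecture). The construction sidesteps this in two different ways: for the difference the unknown quantity cancels exactly by telescoping, while for $a$ only a crude one-sided bound on it is required, and the mod-$4$ sieve supplies the clean constant $\frac{4}{\pi^2}$ that leaves precisely the desired surplus $\frac{2}{\pi^2}$. The only routine points remaining are the standard squarefree density asymptotics in residue classes and the harmless choice of boundary convention for $\mu(0)$.
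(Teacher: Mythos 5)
Your proof is correct, and it takes a genuinely different route from the paper's. The paper builds $a(n)$ \emph{adaptively}, block by block: exploiting that $\mu(4k)=0$ for every $k$, it prescribes the regular $4$-blocks of $a$ and of $\Delta a$ through a case table on the values of $\mu$ in each block, so that every block contributes exactly $0$ to $\sum_n \Delta a(n)\mu(n)$ and at least $1$ to $\sum_n a(n)\mu(n)$ whenever $\mu$ is not identically zero there; the constant then appears as $\frac{1}{3}\cdot\frac{6}{\pi^2}=\frac{2}{\pi^2}$, one third of the squarefree density. Your construction is instead the explicit closed-form sequence $a(n)=\mu(n)+\mu(n-1)$: the difference correlation vanishes by exact telescoping, so no knowledge of the Chowla-type correlation $\sum_n\mu(n)\mu(n+1)$ is needed, and the direct correlation is bounded below by the diagonal term $\frac{6}{\pi^2}N$ minus the sieve bound $\frac{4}{\pi^2}N+o(N)$ on consecutive squarefree pairs obtained from the constraint $n-1\not\equiv 0 \bmod 4$; the same constant $\frac{2}{\pi^2}$ emerges for a different arithmetic reason. (Your density claims check out: squarefree integers have density $\frac{2}{\pi^2}$ in each of the classes $1,2,3 \bmod 4$, so the classes $2,3$ together give $\frac{4}{\pi^2}$.) What your approach buys: a non-adaptive, explicit example showing the phenomenon already occurs for a natural smoothing of $\mu$ itself, which is arguably more illuminating about \emph{why} differencing can destroy correlation; the cost is invoking the standard distribution of squarefree numbers in progressions mod $4$. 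What the paper's approach buys: it uses nothing beyond $\mu(4k)=0$ and the overall squarefree density $\frac{6}{\pi^2}$, and the adaptive sign choices force exact cancellation block by block, a flexibility that could be retuned to other constants or other target functions. Both arguments are unconditional and both yield $\liminf_{N}\frac{1}{N}\sum_{n=1}^N a(n)\mu(n)\ge \frac{2}{\pi^2}$.
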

\begin{proof}

We will construct a sequence $a(n)$ such that,
\begin{align}\label{240113eq2}
\sum_{j=1}^4\Delta a(4k+j)\mu(4k+j)=0\quad \text{ and } \sum_{j=1}^4a(4k+j)\mu(4k+j)\ge 1,
\end{align}
unless
$(\mu(4k), ...,\mu(4k+3)) = (0,0,0,0).$

We construct $a(n)$ by determining all the regular 4-blocks of $a(n)$ and $\Delta a(n)$.
Note that $\mu(4k)=0$ for every $k$.
When
\[\sum_{j=1}^4 \mu(4k+j)\neq 0,\]
we pick either
\begin{align*}
(a(4k), ...,a(4k+3)) = (*,1,1,1), \quad
(\Delta a(4k), ..., \Delta a(4k+3)) = (*,0,0,0)
\end{align*}
or
\[(a(4k), ...,a(4k+3)) = (*,-1,-1,-1),\quad (\Delta a(4k),..., \Delta a(4k+3)) = (*,0,0,0) \]
to make sure that
\[\sum_{j=1}^4a(4k+j)\mu(4k+j)\ge 1.\]
Here and in the following, the notation ``$*$" in $a(n)$ means the value is determined by $a(n-1)$ and $\Delta a(n-1)$,
and $*$ in $\Delta a(n)$ means the value is determined by $a(n)$ and $ a(n-1)$.
When
\[\sum_{j=1}^4 \mu(4k+j)= 0,\]
there are only 6 cases for $(\mu(4k), ...,\mu(4k+3))$ if they are not all 0.
We list these cases in the following table,
and for each case we give a construction of $(a(4k), ...,a(4k+3))$ and $
(\Delta a(4k), ..., \Delta a(4k+3))$.

{
\renewcommand{\arraystretch}{1.25}
\begin{table}[H]
    \centering
    \begin{tabular}{c|c|c}
\toprule
$\left(\mu(4n), \dots, \mu(4n+3)\right)$
&$\left(a(4n), \dots, a(4n+3)\right)$
&$\left(\Delta a(4n), \dots, \Delta a(4n+3)\right)$\\
\hline
$\begin{array}{@{(}w{r}{0.8em}@{, }w{r}{1.5em}@{, }w{r}{1.5em}@{, }w{r}{1.5em}@{)}}
0   &-1 &0  &1\\
0   &-1 &1  &0\\
0   &0  &-1 &1\\
0   &0  &1  &-1\\
0   &1  &-1 &0\\
0   &1  &0  &-1
\end{array}$ &
$\begin{array}{@{(}w{r}{0.8em}@{, }w{r}{1.5em}@{, }w{r}{1.5em}@{, }w{r}{1.5em}@{)}}
*   &-1 &-1 &0\\
*   &-1 &0  &1\\
*   &0  &-1 &0\\
*   &0  &1  &0\\
*   &1  &0  &-1\\
*   &1  &0  &0
\end{array}$ &
$\begin{array}{@{(}w{r}{0.8em}@{, }w{r}{1.5em}@{, }w{r}{1.5em}@{, }w{r}{1.5em}@{)}}
*   &0  &1  &0\\
*   &1  &1  &0\\
*   &-1 &1  &1\\
*   &1  &-1 &1\\
*   &-1 &-1 &0\\
*   &-1 &0  &-1
\end{array}$ \\
\bottomrule
    \end{tabular}
\end{table}
}

Finally, let $a(0)=0$.
It is easy to check that the above values of the regular 4-blocks of $a(n)$ and $\Delta a(n)$ determine a unique bounded sequence $a(n)$,
and the condition \eqref{240113eq2} is satisfied.
Hence
\[\lim_{N\rightarrow \infty } \frac 1N \sum_{n=1}^N \Delta a(n) \mu(n)=0,\]
and
\[\liminf_{N\rightarrow \infty } \frac 1N \sum_{n=1}^N a (n) \mu(n) \ge \lim_{N\rightarrow \infty } \frac 1N \sum_{n=1}^N \frac{|\mu(n)|}3 = \frac{2}{\pi^2} .\]
Hence $a(n)$ is a needed sequence.
\end{proof}

Hence we cannot use (anqie) entropy to completely determine whether a sequence is disjoint from $\mu(n)$ or not.

\section*{Acknowledgments} The first author would like to thank Xianjin Li for many inspiring discussions
during a visit to him.
The authors would also like to thank Liming Ge for helpful conversations.

\appendix

\section{Proof of Lemma \ref{231210lem1}}\label{appendixA}
Given $N \geq 1$.
Let
\[U_i = \left( \frac i N - \frac{1+\epsilon}{2N}, \; \frac i N + \frac{1+\epsilon}{2N}\right), \]
then $\left\{U_i : i=0,1, \ldots, N-1 \right\}$ is an open cover of $\R/\Z$.
We denote $X$ to be the closure of the set $\{(x(n), x(n+1), \ldots): n \in \mathbb{N}\}$ in $()^{\mathbb{N}}$.
Let $B_x$ be the Bernoulli shift on $X$ given by $\left(\omega_0, \omega_1, \ldots\right) \mapsto\left(\omega_1, \omega_2, \ldots\right)$. For $s \geq 1$, denote
$$
\mathcal{W}_s := \left\{U_{i_0} \times U_{i_1} \times \ldots \times U_{i_{s-1}} \times ()^{\mathbb{N} \backslash\{0,1, \ldots, s-1\}}: i_0, i_1, \ldots, i_{s-1}  \in\{0,1, \ldots, N\} \right\},
$$
so $\mathcal{W}_s$ is an open cover of $X$.

For $l=0,1,...$, we iteratively construct integers $t_l$, $s_l$ and open covers $\mathcal{U}^{(l)}$.
We start from letting $t_0=1$ and $\mathcal{U}^{(0)} = \mathcal{W}_1$.
At the $l$-th step,
assume we have a natural number $t_l$ and an open cover $\mathcal{U}^{(l)}$ of $X$ which is a subcover of $\mathcal{W}_{t_l}$.
From the definition of the anqie entropy of $x(n)$,
the topological entropy of $B_x$,
denoted by $h\left(B_x\right)$, is equal to   $\lambda$.
So $h\left(B_x^{t_l}\right)=t_l \lambda$.
Therefore,
$$\lim _{s \rightarrow \infty} s^{-1} \log \mathcal{N}\left( \bigvee_{j=0}^{s-1} \left(B_x^{t_l}\right)^{-j} \mathcal{U}^{(l)} \right) \leq t_l \lambda,$$
where recall we use $\mathcal{N}(\mathcal{U})$ to denote the minimal number of open sets we need in the cover $\mathcal{U}$ to cover $X$.
So there is a sufficiently large natural number
$s_l$ such that
\begin{align}\label{240111eq1}
s_l^{-1} \log \mathcal{N}\left( \bigvee_{j=0}^{s_l-1} \left(B_x^{t_l}\right)^{-j} \mathcal{U}^{(l)} \right) \leq t_l \lambda +2^{-l}.
\end{align}
Therefore it is possible to choose a subcover
\begin{align}\label{240111eq4}
\mathcal{U}^{(l+1)} \subseteq \bigvee_{j=0}^{s_l-1} \left(B_x^{t_l}\right)^{-j} \mathcal{U}^{(l)}
\end{align}
such that its size $\left|\mathcal{U}^{(l+1)}\right| $ satisfies
\begin{align}\label{240111eq2}
s_l^{-1} \log \left|\mathcal{U}^{(l+1)}\right| \leq t_l \lambda +2^{-l}.
\end{align}
We set $t_{l+1}=t_l s_l$.
Then clearly $\mathcal{U}^{(l+1)}$  is a subcover of $\mathcal{W}_{t_{l+1}}$.

To make it clear,
we also construct a sequence of functions $$f_l: \N\rightarrow \{0,\frac{1}{N},\frac{2}{N},..., \frac{N-1}{N}\},\quad l=0,1,2,...$$
We define the values of $f_l$ by its regular $t_{l}$-blocks.
For the $j$-th regular $t_{l}$-block,
since $\mathcal{U}^{(l)}$ is an open cover of $X$,
we can choose an open set $V_j^{(l)}$ from $\mathcal{U}^{(l)}$ such that the point
\begin{align}\label{240111eq3}
\left(x\left(j t_{l}\right), x\left(j t_{l}+1\right), \ldots \right) \in V_j^{(l)}.
\end{align}
Since $\mathcal{U}^{(l)}$  is a subcover of $\mathcal{W}_{t_{l}}$,
we can write $V_j^{(l)}$ as the form
\[V_j^{(l)}= U_{i_0} \times U_{i_1} \times \cdots \times U_{i_{t_{l}-1}} \times X_0^{\mathbb{N} \backslash\left\{0,1, \ldots, t_{l}-1\right\}} . \]
Then we define
\[
\left(f_l\left(j t_{l}\right), f_l\left(j t_{l}+1\right),\ldots, f_l\left((j+1) t_{l}-1\right)\right) = \left(\frac {i_0} N,  \frac {i_1} N , \ldots, \frac {i_{t_{l}-1}} N \right).
\]
Let $j$ run over $\N$ then we complete the construction of $f_l$.
Note that by \eqref{240111eq3},
for any $n,l$ we have
\begin{align}\label{240111eq5}
 |x(n) - f_l(n)| < \frac{1+\epsilon}{2N}.
\end{align}

Finally, we construct the map $g_N$.
Define $g_N(0) = f_0(0) $,
and
\[g_N(n)=f_l(n) \quad \text{ when } \quad t_l \leq n<t_{l+1} , l\ge 0.\]
From \eqref{240111eq5} we have
$\sup_{n\in \mathbb{N}}|g_{N}(n)-x(n)|\le  \frac{1+\epsilon}{2N}$.
We estimate the entropy of $g_N$ by considering its effective regular $t_l$-blocks.
By \eqref{240111eq4} and the construction of $f_l$ and $g_N$,
every effective regular $t_l$-block of $g_N$
is defined from an element of $\mathcal{U}^{(l)}$,
hence $g_N(n)$ has at most $\left|\mathcal{U}^{(l+1)}\right|$ different effective regular $t_l$-blocks.
By Lemma \ref{lem:Jblock},
$$
\AE(g_N)=\lim_{l \rightarrow \infty} \frac{1}{t_{l}} \log   \left|B_{t_l}^{e,r}(g_N)\right| \leq \lim_{l \rightarrow \infty} \frac{1}{t_{l}} \log   \left|\mathcal{U}^{(l+1)}\right| \le \lambda,
$$
where the last inequality comes from \eqref{240111eq2} and the definition $t_{l+1}=t_l s_l$.
Hence the proof is completed.

\bibliographystyle{elsarticle-num}
\bibliography{ref.bib}

\end{document}